\tikzset{
    >=stealth',
    punkt/.style={
           rectangle,
           rounded corners,
           draw=black, very thick,
           text width=6.5em,
           minimum height=2em,
           text centered},
    pil/.style={
           ->,
           thick,
           shorten <=2pt,
           shorten >=2pt,}
}
\tikzset{every loop/.style={min distance=2mm,in=225,out=135,looseness=10}}
\newtheorem{thm}{Theorem}[section]
\newtheorem{pro}[thm]{Proposition}
\newtheorem{lem}[thm]{Lemma}
\newtheorem{cor}[thm]{Corollary}
\theoremstyle{definition}
\newtheorem{dfn}[thm]{Definition}
\newtheorem*{acknowledgements}{Acknowledgements}
\theoremstyle{remark}
\newtheorem{rmk}[thm]{Remark}
\newtheorem*{cla}{Claim}
\newtheorem{que}{Question}
\numberwithin{equation}{section}
\def\J{\mathscr{J}}
\def\D{\mathscr{D}}
\def\R{\mathscr{R}}
\def\L{\mathscr{L}}
\def\H{\mathscr{H}}
\def\T{\mathcal{T}}
\def\E{\mathcal{E}}
\def\B{\mathcal{B}}
\def\es{\varnothing}
\def\ol#1{\overline{#1}}
\def\ig#1{\mathsf{IG}(#1)}
\def\rig#1{\mathsf{RIG}(#1)}
\def\pre#1#2{\langle #1 \; | \; #2 \rangle}
\newcommand{\rrel}{\boldsymbol{r}}
\DeclareMathOperator\im{im}
\begin{document}


\title[The word problem for free idempotent generated semigroups]%
{On regularity and the word problem for \\ free idempotent generated semigroups} 


\author{IGOR DOLINKA}

\address{Department of Mathematics and Informatics, University of Novi Sad, Trg Dositeja Obradovi\'ca 4, 21101 Novi
Sad, Serbia}
\email{dockie@dmi.uns.ac.rs}

\author{ROBERT D. GRAY}

\address{School of Mathematics, University of East Anglia, Norwich NR4 7TJ, England, UK}
\email{Robert.D.Gray@uea.ac.uk}

\author{NIK Ru\v{S}KUC}

\address{School of Mathematics and Statistics, University of St Andrews, St Andrews KY16 9SS, Scotland, UK}
\email{nik.ruskuc@st-andrews.ac.uk}

\thanks{The research of the first author was supported by the Ministry of Education, Science, and Technological Development
of the Republic of Serbia through the grant No.174019, and by the grant No.0851/2015 of 
the Secretariat of Science and Technological Development of the Autonomous Province of Vojvodina.
The research of the second author was partially supported by the EPSRC-funded project EP/N033353/1 `Special inverse monoids: 
subgroups, structure, geometry, rewriting systems and the word problem'.
The research of the third author was supported by the EPSRC-funded project EP/H011978/1 `Automata, Languages,
Decidability in Algebra'.}


\subjclass[2010]{Primary 20M05; Secondary 20F05, 20F10}




\begin{abstract}
The category of all idempotent generated semigroups with a prescribed structure $\E$ of their idempotents $E$ (called the
biordered set) has an initial object called the free idempotent generated semigroup over $\E$,
defined by a presentation over alphabet $E$, and denoted by $\ig{\E}$.
Recently, much effort has been put into investigating the structure of semigroups of the form $\ig{\E}$, especially
regarding their maximal subgroups. In this paper we take these investigations in a new direction by considering the word problem for $\ig{\E}$. 
We prove two principal results, one positive and one negative. 
We show that, for a finite biordered set $\E$, it is decidable whether a given  word $w \in E^*$ represents a regular element; 
if in addition one assumes that all maximal subgroups of $\ig{\E}$
have decidable word problems, then the word problem in $\ig{\E}$
restricted to regular words is decidable. 
On the other hand, we exhibit a biorder $\E$ arising from a finite idempotent semigroup $S$, such that the word
problem for $\ig{\E}$ is undecidable, even though all the maximal subgroups  have decidable word
problems.
This is achieved by relating the word problem of $\ig{\E}$ to the subgroup membership problem in finitely presented groups.
\end{abstract}


\maketitle

\vspace{-4.75mm}
\section{Introduction}
\label{sec:intro}
 
%
%
%

In his foundational paper \cite{Na} Nambooripad made the fundamental observation that the set of idempotents $E(S)$ of an arbitrary semigroup $S$ carries the abstract structure of a so-called biordered set (or regular biordered set in the case of regular semigroups). He provided an axiomatic characterisation of regular biordered sets in his paper. This was later extended by Easdown to arbitrary (non-regular) semigroups \cite{Ea2} who showed that each abstract biordered set is in fact the biordered set of idempotents of a suitable semigroup. Not only are biordered sets important for the study of abstract semigroups but, in addition, for many naturally occurring semigroups their biordered sets of idempotents carry deep algebraic and geometric information. For example, Putcha's theory of monoids of Lie type \cite{Putcha} shows that one can view the biordered set of idempotents of a reductive algebraic monoid as a generalised building, in the sense of Tits. 

The study of biordered sets of idempotents of semigroups is closely related to the study of idempotent generated semigroups. Idempotent-generated semigroups are of interest for a variety of reasons. Firstly, they have the universal property that 
every semigroup embeds into an idempotent generated semigroup \cite{Ho}, and if the semigroup is (finite) countable it can be embedded in a (finite) semigroup generated by $3$ idempotents \cite{Byleen1984}. Secondly, many naturally occurring semigroups have the property that they are idempotent generated. Examples of idempotent generated semigroups include semigroups of transformations \cite{Ho}, matrix semigroups \cite{Er, La}, endomorphism monoids of independence algebras \cite{Gould, FL}, and certain reductive linear algebraic monoids \cite{putcha88, putcha06}. 

Let us suppose that $\E$ is the biordered set arising from the set of idempotents $E$ of a semigroup $S$. The \emph{free idempotent generated semigroup} $\ig{\E}$ is then the free-est semigroup in which the idempotents possess the same structure (for formal definitions see below). In fact, if $S$ is a \emph{regular} semigroup, two such free structures are defined, namely $\ig{\E}$ and its homomorphic image $\rig{\E}$, the \emph{regular free idempotent generated semigroup}
on $\E$. Clearly an important step towards understanding the class of semigroups with a fixed biordered set of idempotents $\E$ is to study 
these free objects. 
The natural question that arises is to which extent and in which ways the structure and the properties of these free objects are determined by those of $S$ and $\E$.

There has been a recent resurgence of interest in the study of free idempotent generated semigroups, see
\cite{BMM1,BMM2,Do1,Do2,DG,DR,ESV,GY,GR1,GR2,McE,YDG}. Most of these recent articles concentrate on maximal subgroups, following in the footsteps of the pioneering work \cite{BMM1} of Brittenham, Margolis and Meakin, where the first non-free such subgroup is exhibited. 
One exception is the article \cite{ESV} by Easdown, Sapir and Volkov, in which the authors investigate the behaviour of elements \emph{not} belonging to maximal subgroups, and show they cannot have finite order. 
The purpose of this paper is to begin the process of broadening this study to what is arguably \emph{the}
key question, namely their word problem and, ultimately, the structure. Until now the word problem for free idempotent generated semigroups has remained poorly understood, with just a handful of  known results which deal only with certain very special classes of biordered sets; see for instance \cite[Section~6]{Pa2}. 
(Note added in revision: Gould and Yang in their recent work \cite{YG} on free idempotent generated semigroups over bands briefly touch on the word problem and prove it is solvable when the underlying band satisfies certain additional conditions.)

Now, for an arbitrary regular biorder $\E$, the semigroup $\rig{\E}$ is structurally very similar to $S$, in terms of their Green's relations, the only difference being in the maximal subgroups. This difference, on the other hand, can be huge, as recent work shows; for example, Dolinka and Ru\v{s}kuc \cite{DR} show that \emph{any} finitely presented group arises as a maximal subgroup of $\rig{\E}$ from a finite semigroup of idempotents (a band). But, due to the tight structural links otherwise, this is the only thing
`that can go wrong': the word problem for $\rig{\E}$, with $E$ finite, is soluble if and only if all the maximal subgroups have soluble word problems (this follows directly from \cite[Theorem 4.1]{Ru}).

The structure of $\ig{\E}$ is much more complicated than that of $\rig{\E}$, as already observed in \cite[Section 3]{BMM1}, where an initial comparison between the two is carried out.
Its regular elements do form a part that in a way `looks like' the regular semigroup $\rig{\E}$, in the sense that the natural epimorphism $\ig{\E}\rightarrow\rig{\E}$ is bijective and Green's structure preserving on this part, and the corresponding maximal subgroups  are isomorphic; see \cite[Theorem 3.6]{BMM1}.
(More formally: the restriction of the above epimorphism to a regular $\D$-class preserves $\R$-, $\L$ and $\H$-classes, and the restriction to a group $\H$-class is a group isomorphism.)
However, $\ig{\E}$ will typically contain non-regular elements as well, and the structure of this part of the semigroup
is not well understood at present.

The aim of this paper is to shift the focus of attention from maximal subgroups to the word problem and the structure of the non-regular part of $\ig{\E}$, by means of establishing the following main results:
\begin{enumerate}
\item
a characterisation for when a word $w\in E^\ast$ represents a regular element in $\ig{\E}$ (Theorem~\ref{thm:testing_reg}); 
in the case where $E$ is finite this characterisation turns into an effective decision procedure (Theorem~\ref{thm:testing_reg_c});
\item
a characterisation for when two words $u,v\in E^\ast$ representing regular elements actually represent the same element of $\ig{\E}$;
when $E$ is finite and all maximal subgroups have soluble word problems, this turns into a solution for the word problem for the regular part of $\ig{\E}$ (Theorem~\ref{thm:reg});
\item
an explicit construction of a finite band $E$ such that all maximal subgroups of $\ig{\E}$ have soluble word problems
but the word problem for $\ig{\E}$ itself is not soluble (Theorem~\ref{thm_BGHMain}).
\end{enumerate}

Our explicit construction is designed so as to relate the word problem in $\ig{\E}$ with the subgroup membership problem in an arbitrary finitely presented group.

As part of (2) we give an algorithm which takes an arbitrary finite biordered set $\E$ and computes Rees matrix representations for each of the regular principal factors of $\ig{\E}$; see Theorem~\ref{thm_presentation} and Lemma~\ref{rho-computable}. These results are important tools which are needed to 
analyse the explicit construction in (3) and establish our results on the word problem. 


Perhaps also worth noting is the immediate corollary from the discussion above and our main results that there exists a finite regular biordered set $\E$ such that the word problem for $\rig{\E}$ is decidable, while that for $\ig{\E}$ is not.

The paper is organised as follows.
In Section~\ref{sec_HowieLallement} we present some results about the action of idempotents on the $\H$-classes in a fixed $\D$-class of a semigroup, and show how this action is encoded in the biorder of an idempotent generated semigroup. These ideas are then applied in Section~\ref{sec_WPReg} where our results on the word problem for regular elements of $\ig{\E}$ are given. Section~\ref{sec_RM} contains results which show how to compute Rees matrix representations of regular $\D$-classes of $\ig{\E}$. In Section~\ref{sec_undec1} we outline the main ideas behind, and state our main results for, our explicit construction of a finite band $E$ such that all maximal subgroups of $\ig{\E}$ have soluble word problems but the word problem for $\ig{\E}$ itself is not soluble. The details of our construction are given in Section~\ref{sec_BGH}, and the proofs of our main results about this construction are then obtained in Sections~\ref{sec_maxsub} and \ref{sec:undec}. Finally in Section~\ref{sec_conc} we make some concluding remarks, and discuss possible  future research directions that arise  from this work.

\section{Actions of idempotents on $\H$-classes}\label{sec_HowieLallement}

In this section we prove some fundamental results about the way that idempotents act on the $\H$-classes within a fixed $\D$-class of a semigroup. Our interest ultimately is in showing that for idempotent generated semigroups these actions are encoded by the biordered set of idempotents of the semigroup. Before turning our attention to idempotent generated semigroups we begin with some general results that hold for arbitrary semigroups. For this it will be useful to first recall a few basic concepts from semigroup theory. Further background in semigroup theory may be found in \cite{CP,Hi,HoBook,LaBook}.

Substantial information about a semigroup may be gained by studying its ideal structure. One of the most fundamental tools in this regard are the five equivalence relations called \emph{Green's relations}. Given a semigroup $S$, we
define for $a,b\in S$:
$$
a\;\R\; b \Leftrightarrow aS^1=bS^1, \quad a\;\L\; b \Leftrightarrow S^1a=S^1b,\quad a\;\J\; b \Leftrightarrow
S^1aS^1=S^1bS^1,
$$
where $S^1$ denotes $S$ with an identity element adjoined (unless $S$ already has one); hence, these three relations
record when two elements of $S$ generate the same principal right, left, and two-sided ideals, respectively.
Furthermore, we let $\H=\R\cap\L$ and $\D=\R\circ\L$, and remark that $\D$ is the join of the equivalences $\R$ and $\L$ because $\R\circ\L=\L\circ\R$. As is
well known, for finite semigroups we always have $\D=\J$, while in general the inclusions
$\H\subseteq\R,\L\subseteq\D\subseteq\J$ hold. The $\R$-class of $a$ is denoted by $R_a$, and in a similar
fashion we use the notation $L_a,J_a,H_a$ and $D_a$.

Recall that an element $a$ of a semigroup $S$ is \emph{(von Neumann) regular} if there exists $a'\in S$ such that
$aa'a=a$. 
If $a$ is an element of a semigroup $S$ we say $a'$ is an \emph{inverse} of $a$ if $aa'a=a$ and $a'aa'=a'$. 
Note that an element with an inverse is necessarily regular. In fact, the converse is also true: every regular element has an inverse.  
%
%
%
%
It is well known that a single $\D$-class consists either entirely of regular or non-regular elements; see \cite[Proposition 2.3.1]{HoBook}. Therefore, regular $\D$-classes are precisely those containing idempotents, and
for each idempotent $e$, the $\H$-class $H_e$ is a group with identity $e$. In fact, this is a maximal subgroup of the
semigroup under consideration and all maximal subgroups arise in this way.

The following result is due to Howie and Lallement \cite[Lemma~1.1]{HL}. 

\begin{lem}[Howie--Lallement Lemma]
Let $S$ be a semigroup and let $e,f \in E(S)$. If $ef \in S$ is regular then $ef$ has an idempotent inverse $g$ such that $ge=g$ and $fg=g$. 
\end{lem}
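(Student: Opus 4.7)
The plan is to start from the regularity hypothesis: since $ef$ is regular, it has an inverse $h\in S$ satisfying $(ef)h(ef)=ef$ and $h(ef)h=h$. Note in particular the identity $hefh=h$, which will be the key relation used throughout. The natural candidate for the desired element is
\[
g=fhe,
\]
obtained by sandwiching the inverse $h$ between the two idempotents $f$ and $e$ (in the reverse order compared to $ef$). The whole proof should then reduce to verifying the four required properties of $g$ by direct manipulation, with the only non-trivial input being $hefh=h$ together with $e^2=e$ and $f^2=f$.

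First I would verify that $g$ is an inverse of $ef$: the computation $(ef)(fhe)(ef) = efhe\cdot ef = efh(ef) = ef$ collapses using $f^2=f$, $e^2=e$, and the fact that $h$ is an inverse of $ef$; similarly $(fhe)(ef)(fhe) = f(hefh)e = fhe = g$. Next, idempotency is essentially the same calculation: $g^2 = fhe\cdot fhe = f(hefh)e = fhe = g$. Finally, the absorption relations are immediate from $e^2=e$ and $f^2=f$: we get $ge = fhe\cdot e = fhe = g$ and $fg = f\cdot fhe = fhe = g$.

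There is no real obstacle in this proof; everything follows once one guesses the form $g=fhe$. The only conceptual point to highlight is \emph{why} this is the right element to try: bracketing $h$ by $f$ on the left and $e$ on the right automatically forces the two absorption conditions $fg=g$ and $ge=g$, while the identity $hefh=h$ is precisely what makes such a sandwich both idempotent and still an inverse of $ef$. So the entire lemma is really a compact consequence of the existence of an inverse of $ef$ combined with the two idempotent relations $e^2=e$ and $f^2=f$.
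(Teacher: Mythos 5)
Your proof is correct and is essentially the same as the paper's: the paper also takes an inverse $x$ of $ef$ and sets $g = fxe$, checking idempotency via $(fxe)^2 = f(xefx)e = fxe$ and leaving the remaining verifications as routine. Your write-up simply carries out explicitly the computations the paper calls routine, and all of them check out.
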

\begin{proof}
We outline the proof 
here for completeness. 
Since $ef$ is regular it has an inverse $x$ in $S$ satisfying
$
ef x ef = ef$, and
$x ef x = x$. 
Then $(fxe)^2 = f(xefx)e = fxe$, so $fxe$ is an idempotent. 
Now routine calculations show the result holds by taking $g = fxe \in E(S)$. 
%
%
%
%
%
\end{proof}

Applying the Howie--Lallement Lemma we obtain the following crucial general result which describes the way that idempotents can act on the $\H$-classes in a given fixed $\R$-class of a semigroup. This is illustrated in Figure~\ref{fig_IDActions}. 

%
%
%
%

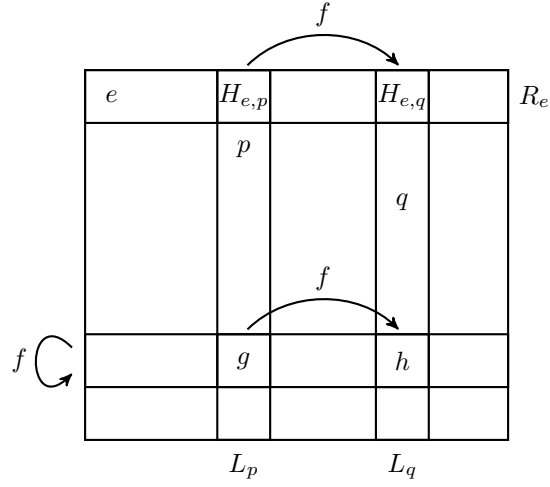
\begin{figure}
\begin{center}
\scalebox{1}
{
\begin{tikzpicture}
[scale=1, 
SHclass/.style ={rectangle, draw=black, thick, fill=blue!20, minimum height=2em, minimum width=2em},
Hclass/.style ={draw, rectangle, thick, minimum height=2em, minimum width=2em},
Drectangle/.style ={draw, rectangle, thick, minimum height=14em, minimum width=16em},
Lrectangle/.style ={draw, rectangle, thick, minimum height=14em, minimum width=2em},
Rrectangle/.style ={draw, rectangle, thick, minimum height=2em, minimum width=16em}],
\node (Hep) at (-2em,6em) [Hclass] {};
\node (Heq) at (4em,6em) [Hclass] {};
\node (Hg) at (-2em,-4em) [Hclass] {$g$};
\node (Hh) at (4em,-4em) [Hclass] {$h$};
%
\node (Hleft2) at (-8em,-4em) {};
\path[->] (Hleft2) edge  [pil, loop left, thick] node {$f$} ();
%
%
%
%
\path[-stealth] (Hep.north) edge[pil, bend left=45] node [above] {$f$} (Heq.north);
\path[-stealth] (Hg.north) edge[pil, bend left=45] node [above] {$f$} (Hh.north);
\node (D) at (0em,0em) [Drectangle] {};
%
%
\node (Re) at (0em,6em) [Rrectangle] {};
\node at (-7em,6em) {$e$};
\node (Lp) at (-2em,0em) [Lrectangle] {};
\node at (-2em,6em) {$H_{e,p}$};
\node at (-2em,-8em) {$L_p$};
\node at (-2em,4em) {$p$};
\node (Lq) at (4em,0em) [Lrectangle] {};
\node at (4em,6em) {$H_{e,q}$};
\node at (4em,-8em) {$L_q$};
\node at (4em,2em) {$q$};
\node at (9em,6em) {$R_{e}$};
\node (Rg) at (0em,-4em) [Rrectangle] {};
%
%
%
%
%
%
%
%
\end{tikzpicture}
}
\end{center}
\caption{
The action of an idempotent on the $\H$-classes within a fixed $\R$-class in an arbitrary semigroup, as described by Proposition~\ref{prop_pure}. 
}\label{fig_IDActions}
\end{figure}

\begin{pro}\label{prop_pure}
Let $S$ be a semigroup, let $e, p, q \in E(S)$ be $\D$-related idempotents in $S$, and let $f \in E(S)$. Set 
\[
H_{e,p} = R_e \cap L_p, \quad \mbox{and} \quad 
H_{e,q} = R_e \cap L_q. 
\]
Then 
\[
H_{e,p} f = H_{e,q} 
\]
if and only if there exist idempotents $g, h \in E(S)$ such that 
\[
p \L g \R h \L q, \quad
fg=g, \quad \mbox{and} \quad
gf=h. 
\]
\end{pro}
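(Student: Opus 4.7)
The plan is to prove each direction separately; both rely on Green's Lemma and the standard fact that for an idempotent $e'$ and an element $a$ with $a\,\L\,e'$, right multiplication by $e'$ fixes $a$ (and symmetrically for $\R$).

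For the implication $(\Leftarrow)$, suppose the idempotents $g, h$ are given. Since $x\,\L\,p\,\L\,g$ for each $x \in H_{ep}$, I obtain $xg = x$; and since $g\,\R\,h$ are idempotents, I obtain $gh = h$ and $hg = g$. The first identity lets me rewrite
\[
xf \;=\; (xg)f \;=\; x(gf) \;=\; xh \qquad (x \in H_{ep}),
\]
so $\rho_f$ and $\rho_h$ coincide on $H_{ep}$. The second pair places $g, h$ as mutually inverse connectors for Green's Lemma applied to $g\,\R\,h$, so $\rho_h \colon L_p \to L_q$ is an $\H$-class-preserving bijection which restricts to a bijection $H_{ep} \to H_{eq}$. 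Combining the two observations yields $H_{ep}f = H_{eq}$.

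For the implication $(\Ra)$, since $e\,\D\,p$ the class $H_{ep}$ is nonempty; pick $x \in H_{ep}$, so that $xf \in H_{eq}$. From $xf\,\R\,e$, $xf\,\R\,x$ and $xf\,\L\,q$, choose $s, t, r \in S^1$ with
\[
xfs = e, \qquad xft = x, \qquad rxf = q,
\]
and define $g := fsx$ and $h := gf = fsxf$. The verification then proceeds in the natural order. Using $f^2 = f$ and the absorption rules $ex = x$ and $e\cdot xf = xf$ (coming from membership in $R_e$), short calculations yield $g^2 = fs(xfs)x = fs\cdot e\cdot x = fsx = g$ and $fg = g$; the relation $g\,\L\,p$ follows from $g \in S^1 x$ together with $xg = (xfs)x = ex = x$, which gives $x \in S^1 g$; then $h^2 = g(fg)f = g^2 f = h$ and $g\,\R\,h$ via $hg = g(fg) = g^2 = g$; and the one-sided inclusion $h \in S^1 q$ is immediate from $h = fs\cdot xf$ and $xf\,\L\,q$.

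The main obstacle is the last step, the opposite inclusion $q \in S^1 h$, which is the only place where all three of $s, t, r$ must cooperate. My plan is to first establish the two cancellation identities
\[
qtf \;=\; r(xft)f \;=\; rxf \;=\; q
\qquad\text{and}\qquad
q\cdot sxf \;=\; r(xfs)xf \;=\; r\cdot e\cdot xf \;=\; rxf \;=\; q,
\]
and then telescope them as
\[
(qt)\,h \;=\; qt\cdot fsxf \;=\; (qtf)\cdot sxf \;=\; q\cdot sxf \;=\; q,
\]
exhibiting $q \in S^1 h$ and completing the proof. The choice $g = fsx$ is essentially dictated by the requirement that this final telescoping go through.
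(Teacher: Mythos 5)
Your proof is correct, and while the $(\Leftarrow)$ direction is essentially the paper's argument (Green's Lemma applied to the translations determined by the mutually inverse pair $gh=h$, $hg=g$, combined with the observation that $\rho_f$ and $\rho_h$ agree on $H_{ep}$ because $xg=x$ for $x\L g$), your $(\Rightarrow)$ direction takes a genuinely different route. The paper first observes via Green's Lemma that $pf$ is regular, then invokes the Howie--Lallement Lemma to produce an idempotent inverse $g$ of $pf$ with $gp=g$ and $fg=g$, and finally uses the Miller--Clifford Theorem to locate $g$ in $R_g\cap L_p$ before setting $h=gf$. You instead bypass both cited results: you pick a single element $x\in H_{ep}$, extract explicit Green's-relation witnesses $s,t,r\in S^1$ with $xfs=e$, $xft=x$, $rxf=q$, and define $g=fsx$, $h=gf$ outright; all eight required identities then follow from the absorption rules $ex=x$, $f^2=f$ and the two telescoping cancellations $qtf=q$ and $q\cdot sxf=q$. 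I checked each computation ($g^2=g$, $fg=g$, $x\L g\L p$, $h^2=h$, $g\R h$, and both inclusions for $h\L q$) and they are all valid, including the only delicate step, namely $q=(qt)h\in S^1h$. What your approach buys is self-containedness and full elementarity -- no regularity argument, no inverses, no Miller--Clifford; what the paper's approach buys is brevity and a conceptual link to the Howie--Lallement Lemma, which it has just stated and which motivates the whole section. Either proof is acceptable.
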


\begin{rmk}
The reader should note the complication of similar looking, but fundamentally different, notations for $\H$-classes: the single-indexed 
$H_a$ stands for the $\H$-class of the element $a$, while the double-indexed $H_{e,p}$ is the intersection of the $\R$-class of $e$ with the $\L$-class of $p$.
\end{rmk}

\begin{proof}[Proof of Proposition \ref{prop_pure}]
($\Rightarrow$) Suppose that $H_{e,p} f = H_{e,q}$ in $S$. By Green's Lemma \cite[Lemma~2.2.1]{HoBook} the map $x \mapsto xf$ is an $\R$-class preserving bijection from $L_p$ to $L_q$. It follows that in $S$ we have $p \R pf \L q$. In particular, $pf$ is a regular element of $S$. By the Howie--Lallement Lemma $pf$ has an idempotent inverse $g$ such that $gp=g$ and $fg=g$. Since $pf$ and $g$ are inverses it follows that $pg = (pf)g$ is an idempotent with $pf \R pfg = pg \L g$. Consider the idempotent $pg \in E$. From above 
$g\L pg\R pf\R p$, and since $pg$ is an idempotent it follows by the Miller--Clifford Theorem \cite[Proposition~2.3.7]{HoBook} that $g=gp \in R_g \cap L_p$. Set $h = gf$ which is an idempotent $\R$-related to $g$, because $fg=g$. 
Since $g \in L_p$ and $x \mapsto xf$ is an $\R$-class preserving bijection from $L_p$ to $L_q$, it follows that $h = gf \L pf \L q$. 
Therefore we have found idempotents $g, h \in S$ such that  $p \L g \R h =gf \L q$,
$fg=g$ and $gf = h$. 

($\Leftarrow$) Now suppose that there exist $g, h \in E(S)$ such that $p \L g \R h \L q$,
$fg=g$ and $gf = h$. 
By Green's Lemma the mapping $x\mapsto xf$ is an $\R$-preserving bijection
between $L_g$ and $L_h$
Since $H_{e,p}=R_e\cap L_p=R_e\cap L_ g$ and $H_{e,q}=R_e\cap L_q=R_e\cap L_h$,
it follows that $H_{e, p}f = H_{e, q}$, as required.
%
%
%
%
\end{proof} 




\begin{dfn}\label{def_biorder}
Let $S$ be a semigroup with set $E = E(S)$ of idempotents. 
The \emph{biordered set} $\E = (E, *)$ is the partial algebra where 
\begin{equation}
\label{eqRev5}
e * f =   ef \ 
\mbox{if $\{e,f\} \cap \{ef, fe \} \neq \varnothing$,}
\end{equation} 
and the product $e * f$ in $\E$ is undefined otherwise. 
A pair of idempotents $(e,f)$ satisfying the condition $\{e,f\}\cap \{ef,fe\}\neq \varnothing$
is called a \emph{basic pair}.
\end{dfn}
It is easy to show that for such a pair their product $ef$ is indeed again an idempotent, and that it is either $\R$-related to $e$ or $\L$-related to $f$. Throughout we shall abuse notation writing $ef$ instead of $e * f$. The reader should remember that if the product $ef$ is indicated to take place in $\E = \E(S)$ then Condition \eqref{eqRev5} must hold.

We now turn our attention to idempotent generated semigroups. In what follows $S$ will denote an 
idempotent generated semigroup with set of idempotents $E = E(S)$ and corresponding biorder $\E = \E(S)$.


\begin{lem}\label{lem_zero} For all $e, f \in E$ we have 
\begin{align}
\label{eqRev1}
\mbox{$e \R f$ in $S$} & \Leftrightarrow  \mbox{$ef=f$ and $fe=e$ in $\E$}, \\
\label{eqRev2}
\mbox{$e \L f$ in $S$} & \Leftrightarrow  \mbox{$ef=e$ and $fe=f$ in $\E$}, 
\end{align}
and
\begin{equation}
\label{eqRev3}
\mbox{$e \D f$ in $S$}  \Leftrightarrow  
\exists \ e_1, e_2, \ldots, e_n \in E: e = e_1 \R e_2 \L e_3 \R \ldots \L e_{n-1} \R e_n =f \ \mbox{in $S$}. 
\end{equation}
\end{lem}
\begin{proof}
This first two claims are immediate from the definition of $\E$. The third clause can be recovered by appealing to the theory of $E$-chains \cite{Na}, but for the sake of completeness we show how it can be proved by applying Proposition~\ref{prop_pure}. 
Suppose $e\D f$, and let
$t \in S$ be such that $e \R t \L f$. As $S$ is idempotent generated we can write $t$ as a product of idempotents 
$t = e_1 e_2 \ldots e_k$. 
From $e\R f$ it follows that $et=t$, and hence
\[
e \R e e_1 \R e e_1 e_2 \R \ldots \R e e_1 e_2 \ldots e_k. 
\]
Therefore for each $1 \leq i \leq k$, 
\[
H_{e e_1 e_2 \ldots e_{i-1}} e_i =
H_{e e_1 e_2 \ldots e_{i}},
\]
and, applying Proposition~\ref{prop_pure}, there are idempotents $g_i, h_i$ with 
\[
e e_1 e_2 \ldots e_{i-1} \L g_i \R h_i \L e e_1 e_2 \ldots e_{i}. 
\]
But then 
\[
e \L g_1 \R h_1 \L g_2 \R h_2 \ldots \L g_k \R h_k \L f, 
\]
as required.  The converse implication is obvious.
%
%
\end{proof}

The following lemma is another application of Proposition~\ref{prop_pure} and will show how in $S$ the action of the elements from $E$ on the $\H$-classes in a given regular $\R$-class is completely determined by (and is computable from) the biorder $\E$. Even though not explicitly 
referred to 
 here, the ideas in the following lemmas intimately relate to the notion of sandwich set (and generalised sandwich sets) and their connection to the theory of biordered sets, as explored by Pastijn in \cite{Pa2}. 


\begin{lem}\label{lem_ExtremeHowie}
Let $e, p, q \in E$ be $\D$-related idempotents in $S$, and let $f \in E$. Set 
\[
H_{e,p} = R_e \cap L_p, \quad \mbox{and} \quad 
H_{e,q} = R_e \cap L_q. 
\]
Then 
\[
H_{e,p} f = H_{e,q} \ \mbox{in $S$}
\]
if and only if there exist $g, h \in E$ such that the following equations all hold in the biorder $\E$:
\begin{align*}
pg &= p, 			&	gp &= g,  &
gh &= h, 			&	hg &= g, \\ 
hq &= h, 			&	qh &= q, &
fg &= g, 				&	gf &= h.  
\end{align*}
\end{lem}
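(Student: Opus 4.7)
The plan is to deduce this from Proposition~\ref{prop_pure} combined with Lemma~\ref{lem_zero}, which translates the Green's relations $\L$ and $\R$ between idempotents into equational statements. The whole content of the lemma is that the Green's-relation conditions $p\L g\R h\L q$ appearing in Proposition~\ref{prop_pure}, together with the two equations $fg=g$ and $gf=h$ holding in $S$, can all be rewritten as equations that are meaningful in the partial algebra $\E(S)$, i.e.\ every pair whose product is mentioned is a basic pair.

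For the forward direction, I would begin by applying Proposition~\ref{prop_pure} to obtain idempotents $g,h\in E(S)$ with $p\L g\R h\L q$, $fg=g$ and $gf=h$ in $S$. Lemma~\ref{lem_zero} converts these three Green's relations directly into the six equations $pg=p,\ gp=g,\ gh=h,\ hg=g,\ hq=h,\ qh=q$, each of which asserts that a semigroup product equals one of its factors; consequently, in every case the relevant pair satisfies $\{x,y\}\cap\{xy,yx\}\neq\es$ and so is a basic pair, so the equation is a legitimate assertion in $\E(S)$. The equation $fg=g$ is a biorder equation for exactly the same reason. As for $gf=h$, observe that the already-established equality $fg=g$ shows $fg\in\{g,f\}$, so $(g,f)$ is a basic pair and $g*f$ is defined in $\E(S)$; by the definition of the biorder it coincides with the semigroup product $gf=h$, and so $gf=h$ is a biorder equation as well.

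The reverse direction runs in the opposite direction through the same correspondences. Given the eight biorder equations, each of them holds in $S$ because whenever $e*f$ is defined in $\E(S)$ it agrees with the semigroup product $ef$. From $pg=p,\ gp=g$ we obtain $p\L g$ in $S$ (Lemma~\ref{lem_zero}); similarly $g\R h$ and $h\L q$. Together with $fg=g$ and $gf=h$ these are precisely the hypotheses needed to invoke the $(\Leftarrow)$ direction of Proposition~\ref{prop_pure}, yielding $H_{ep}f=H_{eq}$ in $S$.

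The only thing to watch is the verification that each product mentioned is in fact a basic pair, so that the eight formulae are genuine equations in the partial algebra $\E(S)$ rather than assertions involving undefined products. As noted above, however, in every case the relevant equation has the form $xy=x$ or $xy=y$, or else (for $gf=h$) a companion equation $fg=g$ has already placed the pair in the domain of $*$; so this bookkeeping is routine and presents no real obstacle.
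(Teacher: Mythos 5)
Your proposal is correct and follows essentially the same route as the paper: both directions reduce to Proposition~\ref{prop_pure}, with Lemma~\ref{lem_zero} translating the Green's relations into the six equations and the observation that $fg=g$ makes $(f,g)$ a basic pair (so that $gf=h$ is a legitimate biorder equation). Your extra bookkeeping on basic pairs is just a more explicit version of what the paper states in one line.
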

\begin{proof}
($\Rightarrow$) Suppose that $H_{e,p} f = H_{e,q}$ in $S$. By 
Proposition~\ref{prop_pure} 
there exist idempotents 
$g, h \in E$ such that  in $S$ we have $p \L g \R h \L q$,
$fg=g$ and $gf = h$. In particular $(f,g)$ is a basic pair of idempotents,
yielding $fg=g$, $gf=h$ in $\E$.
The remaining six equalities express $p\L g\R h\L q$, as in Lemma \ref{lem_zero}.  

($\Leftarrow$) Now suppose that there exist $g, h \in E$ such that the eight listed equations hold in $\E$. It follows that the equations also hold in $S$, and thus in $S$ we have $p \L g \R h \L q$, $fg=g$ and $gf=h$. 
Now the result follows by the converse implication of Proposition~\ref{prop_pure}.
\end{proof}

\begin{rmk}
Lemma~\ref{lem_ExtremeHowie} is general, and in particular no finiteness assumption is imposed on the set $E$. 
It is important to stress that the action we are considering of $S$ on the
$\H$-classes in a given fixed $\R$-class is different from the closely related 
action of $S$ on the $\L$-classes in $\D$ (even though the two coincide in the finite case).
To see this consider the bicyclic monoid $B=\langle b,c\:|\: bc=1\rangle$.
In the action on the $\H$-classes of $R_1$, the product $H_{1,1}c$ is undefined, while in the action on $\L$-classes we have $L_1c=L_1$.
\end{rmk} 

The following result formalises the statement that the action of $E$ on the $\H$-classes in a given $\R$-class of $S$ is completely determined by the biorder $\E(S)$. 

\begin{lem}\label{lem_MatchingAction}
Let $S$ and $T$ be semigroups with the same biordered set of idempotents $\E(S)=\E(T)=\E$. Let $e\in E$ and $w \in E^*$. Then
\[
\mbox{
$ew \R e$ in $S$ $\Leftrightarrow$
$ew \R e$ in $T$. 
}
\]
Additionally, if both are true, then for any $q \in E$ we have 
\[
\mbox{
$ew \L q$ in $S$ $\Leftrightarrow$ 
$ew \L q$ in $T$. 
}
\]
\end{lem}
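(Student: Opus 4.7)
The plan is to induct on $|w|$, proving simultaneously the strengthened hypothesis: (a) $ew \R e$ in $S$ iff $ew \R e$ in $T$, and (b) whenever both hold, then for every $q \in E$ one has $ew \L q$ in $S$ iff $ew \L q$ in $T$. The base case $|w|=0$ is immediate, since $ew=e$ is $\R$-related to itself, and Lemma~\ref{lem_zero} expresses $e \L q$ purely as a biorder condition, which is the same in $S$ and $T$.

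For the inductive step I factor $w = w' f$ with $f \in E$ and invoke the inductive hypothesis on $w'$. If $ew' \not\R e$ in $S$, then by IH also $ew' \not\R e$ in $T$; and since right multiplication cannot increase the $\R$-class, $ew \le_\R ew' <_\R e$ in both semigroups, so neither $\R$-equivalence holds and (a) is established vacuously while (b) is not asserted. Otherwise $ew' \R e$ in both semigroups, and because $D_e$ is a regular $\D$-class (it contains the idempotent $e$), the $\L$-class of $ew'$ in $S$ contains some idempotent $p \in E$; by part~(b) of the inductive hypothesis the same $p$ also satisfies $ew' \L p$ in $T$. Thus a single idempotent $p$ serves as $\L$-witness for $ew'$ in both $S$ and $T$.

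The crux of the argument is then Lemma~\ref{lem_ExtremeHowie}. Since right multiplication by $f$ preserves $\R$-classes, $ew = ew' f$ is $\R$-related to $e$ in $S$ if and only if $H_{ep} f = H_{eq}$ for some $q \in E$; by Lemma~\ref{lem_ExtremeHowie} this is equivalent to the existence of $g, h, q \in E$ satisfying eight equations involving only the partial biorder operation on $\E(S)$. Since $\E(S) = \E(T)$ and $ew' \L p$ holds in $T$ as well, the same eight equations encode the corresponding statement in $T$, so (a) follows. For (b), if $q \in E$ satisfies $ew \L q$ in $S$, Lemma~\ref{lem_ExtremeHowie} produces $g, h \in E$ making the eight biorder equations hold; applying the converse direction of the same lemma inside $T$ (noting that $e, p, q$ remain $\D$-related in $T$ by the $\D$-clause of Lemma~\ref{lem_zero}, which is itself biorder-theoretic) yields $H_{ep} f = H_{eq}$ in $T$, hence $ew \L q$ in $T$. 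The reverse implication is symmetric.

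The main obstacle I anticipate is securing a common idempotent witness $p$ with $ew' \L p$ in both semigroups: Lemma~\ref{lem_ExtremeHowie} is formulated in terms of $\H$-classes $H_{ep}$ indexed by idempotents, so without such a $p$ the lemma has no grip on the non-idempotent element $ew'$. This is precisely what part~(b) of the inductive hypothesis is tailored to deliver, combined with the fact that $D_e$ is a regular $\D$-class and therefore each of its $\L$-classes meets $E$. Once this common $p$ is in place, the rest of the proof is a bookkeeping exercise in transferring the biorder equations of Lemma~\ref{lem_ExtremeHowie} between $S$ and $T$.
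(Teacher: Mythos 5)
Your proof is correct and takes essentially the same route as the paper, whose entire proof is the single sentence ``both claims are proved simultaneously by induction on the length of $w$, using Lemma~\ref{lem_ExtremeHowie}.'' You have supplied the details faithfully, and in particular you correctly identified the one genuinely non-trivial point: that regularity of $D_e$ together with part (b) of the induction hypothesis yields a single idempotent $p$ with $ew'\L p$ in both $S$ and $T$, which is what allows the eight biorder equations of Lemma~\ref{lem_ExtremeHowie} to be transported between the two semigroups.
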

\begin{proof}
Both claims are proved simultaneously by induction on the length of the word $w$, using Lemma~\ref{lem_ExtremeHowie}, which implies that the validity of each of the four constituent  clauses
(namely, $ew \R e$ in $S$, $ew \R e$ in $T$, $ew \L q$ in $S$ and $ew \L q$ in $T$)
depends only on $\E$, and not on $S$ or $T$ themselves.
%
\end{proof}

\section{The word problem for regular elements of $\ig{\E}$}\label{sec_WPReg}
\label{secWPreg}

Given an arbitrary biordered set $\E = \E(S)$, we will define the \emph{free idempotent generated semigroup} $\ig{\E}$ associated to $\E$ 
by means of a presentation.
The set of generating symbols in this presentation will be precisely $E$, which opens up the danger of ambiguity in what follows: products of elements of $E$ can be interpreted both as words over $E$ (i.e. elements of $E^\ast$) or as the specific elements of $E$ (or indeed of $S$) to which they evaluate.
With this in mind,  $\ig{\E}$
is the semigroup defined by the presentation 
\begin{equation} \label{eq:igE}
\ig{\E} = \pre{E}{\,ef=e * f \ (\{e,f\} \cap \{ef,fe\} \neq \es)\,},
\end{equation}
where $ef$ is the word of length $2$ with letters $e$ and $f$, while $e * f$ denotes the partial multiplication in $\E$ considered as a partial algebra; see Definition~\ref{def_biorder}. 
When working with this presentation, given two words $u, w \in E^*$ we shall write $u \equiv v$ to mean $u$ and $v$ are identical as words in $E^*$, and write $u=v$ to mean they represent the same element of the semigroup $\ig{\E}$. 

Clearly, $\ig{\E}$ is an idempotent generated semigroup, and
it follows from \cite{Ea2} that  $\E(\ig{\E}) = \E$, that is, the biordered set of idempotents of $\ig{\E}$ is precisely $\E$. 
In particular, Lemmas~\ref{lem_zero},
\ref{lem_ExtremeHowie}, and \ref{lem_MatchingAction} all apply to this semigroup. 
%
%
In the special case that $\E$ is finite, we can deduce the following decidability results for the semigroup $\ig{\E}$. 

\begin{lem}\label{lem_zero_c}
There is an algorithm which takes a finite biordered set $\E$ and $e, f \in E$ and decides each of $e \R f$, $e \L f$, and $e \D f$  in $\ig{\E}$.  
\end{lem}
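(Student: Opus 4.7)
The plan is to reduce each of the three decision problems to finite computations in the partial algebra $\E$, using the characterisations given in Lemma~\ref{lem_zero}. Since $\E$ is finite and presented as a partial algebra, the basic products $e * f$ (for basic pairs $(e,f)$) can be read off directly; in particular, for any $e,f \in E$ one can decide in constant time whether $(e,f)$ is a basic pair and, if so, compute $e*f$.

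For the $\R$- and $\L$-relations, Lemma~\ref{lem_zero} gives immediate algorithms. To test $e \mathrel{\R} f$, check whether $(e,f)$ and $(f,e)$ are basic pairs and, if so, whether $ef = f$ and $fe = e$ in $\E$; report yes exactly when both conditions hold. The test for $e \mathrel{\L} f$ is symmetric, checking $ef=e$ and $fe=f$. Both procedures clearly terminate and are correct by the first two equivalences of Lemma~\ref{lem_zero}.

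For the $\D$-relation restricted to $E$, apply the third clause of Lemma~\ref{lem_zero}: $e \mathrel{\D} f$ in $\ig{\E}$ if and only if there is a finite alternating $\R$-$\L$-chain of elements of $E$ connecting $e$ to $f$. Using the two preceding algorithms, tabulate the binary relations $\R|_E$ and $\L|_E$ by testing all pairs in $E \times E$; then compute the equivalence relation on $E$ generated by $\R|_E \cup \L|_E$ (for instance by a standard transitive-closure or union-find procedure). Return yes for the input $(e,f)$ precisely when $e$ and $f$ lie in the same class of this equivalence. Since $E$ is finite, the whole computation terminates.

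The only point requiring care is the $\D$-step: a priori $\D$-equivalence in $\ig{\E}$ could require intermediaries outside $E$, so it is essential that Lemma~\ref{lem_zero} restricts the connecting chain to idempotents, which keeps the search space finite. Granted that reduction, everything else is a routine finite enumeration in $\E$.
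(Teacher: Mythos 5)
Your proposal is correct and fills in exactly the routine details the paper elides: its proof of Lemma~\ref{lem_zero_c} is just ``immediate consequence of Lemma~\ref{lem_zero}'', and your reduction of $\R$ and $\L$ to checks in the finite partial algebra, followed by computing the equivalence generated by $\R|_E\cup\L|_E$ for $\D$, is precisely the intended argument. You also correctly flag the one genuinely essential point, namely that the third clause of Lemma~\ref{lem_zero} confines the connecting chain to idempotents, which is what makes the search space finite.
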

\begin{proof}
This is an immediate consequence of Lemma~\ref{lem_zero}:
for the relations $\R$ and $\L$ the criteria in \eqref{eqRev1}, \eqref{eqRev2} can be read off directly from $\E$, and for 
$\D$ one has to note that the sequence $e_1,\dots, e_n$ in \eqref{eqRev3} can be taken not to contain any repeats, and then its length is bounded by $|\E|$.
\end{proof}

\begin{rmk}
Let us point out the subtle difference between a finite biorder and a biorder arising from a finite semigroup.
Clearly, the biorder arising from a finite semigroup is finite. However the converse is not true, as demonstrated by Easdown; see \cite[Theorem 13]{Ea1}.
All our decidability results are predicated on finiteness, and we couch them in the more general setting of finite biordered sets. 
%
\end{rmk}

\begin{lem}\label{lem_three_c}\label{lem_four_c}
There exist algorithms which for any given finite biordered set $\E$ decide the following:
\begin{itemize}
\item[\textup{(i)}]
for given $\D$-related idempotents $e, p, q \in E$, 
and another idempotent $f \in E$, 
whether  
\[
H_{e,p} f = H_{e,q} \ \mbox{in $\ig{\E}$},
\]
where 
\[
H_{e,p} = R_e \cap L_p \quad \mbox{and} \quad 
H_{e,q} = R_e \cap L_q;
\]
\item[\textup{(ii)}]
for elements $e \in E$ and $w \in E^*$ whether 
$ew \R e$ in $\ig{\E}$, and if so returns $q \in E$ such that $ew \L q$. 
\end{itemize}
\end{lem}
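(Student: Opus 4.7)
For part (i), the plan is to read Lemma~\ref{lem_ExtremeHowie} directly as a decision procedure. That lemma says $H_{ep}f = H_{eq}$ in $\ig{\E}$ if and only if there exist idempotents $g, h \in E$ satisfying the eight prescribed biorder equations listed in its statement. Each of those equations involves a product of two elements of $E$, which is effectively checkable in $\E$: one verifies whether the pair in question is basic, and if so reads off the value from the finite partial algebra $\E$. Since $E$ is finite, the algorithm will enumerate all pairs $(g,h) \in E \times E$, test the eight equations for each, accept if some pair works, and reject otherwise.

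For part (ii), my approach will be induction on the length of $w$, with part (i) supplying the inductive step. If $w$ is empty then $ew = e$, and the algorithm outputs yes with $q = e$. Otherwise I factor $w \equiv w'f$ with $f \in E$ and recursively decide whether $ew' \R e$. If that recursive call rejects, so will the algorithm: since $ew = (ew')f \in ew' \ig{\E}^1$, we have $R_{ew} \leq_{\R} R_{ew'} \leq_{\R} R_e$, so $ew \R e$ would force $ew' \R e$, contradicting the recursive answer. If the recursive call accepts and returns $q' \in E$ with $ew' \L q'$, then $ew' \in H_{eq'} = R_e \cap L_{q'}$, so $ew \in H_{eq'} f$. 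The only candidates for $q$ with $ew \L q$ are the idempotents $\D$-related to $e$, a finite set that is computable by Lemma~\ref{lem_zero_c}. For each such $q$ the algorithm will invoke part (i) to test whether $H_{eq'} f = H_{eq}$; if some $q$ passes this test, it will output yes with this $q$, and otherwise reject.

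Most of the conceptual work is therefore already done in Lemma~\ref{lem_ExtremeHowie}, so I do not anticipate a substantial obstacle. The only mildly delicate point will be justifying the rejection step of the induction for (ii) from the failure of $ew' \R e$; this rests on the standard one-sided monotonicity of the $\leq_{\R}$ preorder, as indicated above. Everything else reduces to bounded search in a finite partial algebra, and is routine once finiteness of $\E$ is exploited.
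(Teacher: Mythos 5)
Your proposal is correct and follows essentially the same route as the paper: part (i) is the finite search over pairs $(g,h)$ implicit in reading Lemma~\ref{lem_ExtremeHowie} as a decision procedure, and part (ii) is the same recursion on the length of $w$ with the last letter peeled off and part (i) applied to the candidates $q$ that are $\D$-related to $e$. The only difference is that you spell out the justification of the rejection step via $\leq_{\R}$-monotonicity, which the paper leaves implicit inside its stated biconditional.
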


\begin{proof}
\textup{(i)}
This is an immediate consequence of Lemma~\ref{lem_ExtremeHowie}. 

\textup{(ii)}
If $w \equiv \epsilon$ the algorithm gives an affirmative answer, and returns $q = e$. 
Otherwise write $w \equiv w'f$. From the definition of Green's relation $\R$, and the fact that in a regular $\D$-class every $\L$-class contains an idempotent, it follows that in $\ig{\E}$ we have
\[
ew \R e \Leftrightarrow 
ew' \R e \ \mbox{and} \ 
H_{e,q'}f = H_{e,q}, 
\]
for some idempotents $q', q \in E$ with $q' \L ew'$ and $q \L ew$. Recursively check whether $ew' \R e$ and, if so, compute all
$q' \in E$ with $q' \L ew'$. Then use Lemma~\ref{lem_three_c} to check if there exists $q \in E$ such that $H_{e,q'}f = H_{e,q}$,
and if so return an affirmative answer and $q$. 
\end{proof}
We now turn our attention to the word problem for regular elements of $\ig{\E}$. 
First we prove a lemma which describes the form that words representing regular elements of $\ig{\E}$ can take.  
\begin{lem}\label{lem_RegWords}
Suppose that $u, v \in E^*$, $e \in E$ satisfy $ue \L e$, $ev \R e$ in $\ig{\E}$. 
Then applying a single relation from the presentation for $\ig{\E}$ to the word $uev$ yields a word $u'e'v'$ such that
\[
u'e' \L e' \R e'v'
\]
holds in $\ig{\E}$.
In particular, $e' \D e$ in $\ig{\E}$. 
\end{lem}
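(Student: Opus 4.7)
The plan is a case analysis on where the single defining relation is applied in $uev$. Defining relations of $\ig{\E}$ have the form $fg = f * g$ for a basic pair $(f,g)$, and one rewriting step either contracts two adjacent letters to a single idempotent or expands a single letter into two. Throughout I will rely on the standard biordered-set fact that for any basic pair $(f,e)$ with product $h = fe$, one has $h \R f$ or $h \L e$, together with the elementary identity that $\R$-related idempotents $a,e$ satisfy $ae=e$ and $ea=a$ (and dually for $\L$).

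If the rewriting occurs entirely within $u$, producing a word $\widetilde u$, then $\widetilde u = u$ in $\ig{\E}$, so the partition $u' = \widetilde u$, $e' = e$, $v' = v$ preserves both hypotheses $u'e' \L e'$ and $e' \R e'v'$ immediately; the case of a rewriting within $v$ is symmetric. This disposes of the routine cases.

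The substantive cases are the three boundary rewritings. Suppose first we have a contraction at the $u$-$e$ boundary: $u = u_1 f$, $(f,e)$ is a basic pair with $fe = h$, and the new word is $u_1 h v$. By the key fact (and a small case check) either $h \L e$, which subsumes $fe=e$ and $ef=e$, or $h = f$, which subsumes $fe=f$ and $ef=f$ (because $h \R f$ together with $h \L f$ would put two idempotents in one $\H$-class). In the first sub-case $u_1 h = ue \L e \L h$, and a short calculation using $ev \R e$ together with $h \L e$ (write $h = s \cdot e$ and $e = ev\cdot t$) gives $h \R hv$, so the partition $u' = u_1$, $e' = h$, $v' = v$ works. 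In the second sub-case $ue = u_1 f = u$ in $\ig{\E}$, hence $u \L e$; combined with $f \leq_L e$ (from $fe = f$) and $u \in \ig{\E}^1 f$, the standard inclusions $\ig{\E}^1 u \subseteq \ig{\E}^1 f \subseteq \ig{\E}^1 e = \ig{\E}^1 u$ force $u \L f$, so the partition $u' = u_1$, $e' = f$, $v' = v$ works. The contraction at the $e$-$v$ boundary is completely dual.

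The remaining case is when $e$ itself is expanded via a relation $ab = e$ for some basic pair $(a,b)$, giving the word $uabv$. By the key fact either $e \R a$ or $e \L b$. In the first sub-case I take $u' = u$, $e' = a$, $v' = bv$: writing $ue \L e$ as $e = x \cdot ue$ and using $ea = a$ (from $e \R a$) yields $a = ea = (xu)ea = x(ua) \in \ig{\E}^1 \cdot ua$, hence $ua \L a$; and $abv = ev \R e \R a$ gives $a \R abv$, as required. The case $e \L b$ is symmetric, with partition $u' = ua$, $e' = b$, $v' = v$. In every scenario the chosen $e'$ lies in $\{e\} \cup L_e \cup R_e$, proving also that $e' \D e$. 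The main obstacle is the sub-case analysis at the boundary contractions, where the naive partition with $e' = h$ can fail and one must fall back to $e' = f$; verifying that this fallback actually gives $u \L f$ is the one place where the combination of the hypothesis $ue \L e$ with the biorder relation $f \leq_L e$ must be exploited in a non-trivial way.
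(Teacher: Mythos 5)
Your overall strategy is the same as the paper's (a case analysis on where the single relation is applied), and your treatment of the interior rewritings, of the expansion of $e$, and of the final claim $e'\D e$ is correct. The gap is in the boundary contraction, at exactly the step you flag as the main obstacle. The dichotomy you assert for a basic pair $(f,e)$ with $h=fe$ --- ``either $h\L e$ or $h=f$'' --- is false. The basic pair condition is a disjunction of four equalities, and while $fe=e$ and $ef=e$ do give $h\L e$, and $fe=f$ gives $h=f$, the remaining possibility $ef=f$ only yields $h\R f$; your parenthetical justification silently assumes $h\L f$ in order to force $h=f$, and there is no reason for that. Concretely, in the band $B_{G,H}$ of Section~6 take $e=e_a\in L_G$ and $f=(1,\infty)\in K_G$: then $ef=f$, so $(f,e)$ is a basic pair, yet $h=fe=(1,a)$ is $\R$-related to $f$ but equal to neither $f$ nor anything $\L$-related to $e$ (indeed $h$ and $e$ lie in different $\J$-classes, since $K_G$ is a proper ideal). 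In that third possibility neither of your two sub-cases applies, so your argument does not cover it; moreover your ``fallback'' to $e'=f$ would in any case fail there, since the new word contains the letter $h$, not $f$.

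The missing idea is that the position of $h$ is pinned down by the hypothesis rather than by the biorder alone: from $ue\equiv u_1(fe)$ one has $\ig{\E}^1(ue)\subseteq\ig{\E}^1(fe)\subseteq\ig{\E}^1 e$, and $ue\L e$ forces equality throughout, hence $h=fe\L e$. Once this is known, your sub-case (1) computation applies verbatim and the partition $u'\equiv u_1$, $e'=h$, $v'\equiv v$ always works; the fallback is never needed (when $fe=f$ your own argument already shows $f\L e$, so that case is subsumed as well). This is precisely how the paper handles the dual contraction at the $e$--$v$ boundary: from $e\R ev\equiv efv_1$ it first deduces $e\R ef=g$ and only then sets $e'=g$. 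So the lemma is not in danger and the repair is one line, but as written your proof omits a genuinely possible configuration of basic pairs.
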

\begin{proof}
If a relation is applied to $u$, yielding a word $u'$, set $e'=e$, $v'\equiv v$, and the assertion is obvious. The case of applying a relation to $v$ is analogous. 

Suppose now we apply a relation of the form $e=fg$ to $e$. Then we know that either 
$e \R f$ or $e \L g$. Without loss of generality, suppose $e \R f$. 
In this case set $u'\equiv u$, $e' = f$ and $v' \equiv gv$. Firstly, we have 
\[
e'v' \equiv fgv = ev \R e \R f = e',
\]
Secondly, from $ue \L e$ and Green's Lemma \cite[Lemma 2.2.1]{HoBook} we have that $x \mapsto ux$ is an $\L$-class preserving bijection $R_e \rightarrow R_{ue}$. But $f \in R_e$, and so 
\[
u'e' \equiv uf \L f = e',
\]
as required. 

For the next case, suppose that $v \equiv fv_1$, that $(e,f)$ is a basic pair, and the relation applied is $ef = g$. 
From $e \R ev \equiv efv_1$ we have $e \R ef = g$, and the result may be seen to hold by setting $u'\equiv u$, $e' = g$ and $v' \equiv v_1$. Finally, the case where $u \equiv u_1 f$ and the relation applied is $fe=g$ is dual to this one. 

This covers all possible applications of relations, and the proof is complete. 
\end{proof}
An immediate corollary of this lemma is the following result. 
\begin{lem}\label{lem_RegWords2}
Suppose that $u, v \in E^*$, $e \in E$ satisfy $ue \L e$, $ev \R e$ in $\ig{\E}$. 
Then every word over the alphabet $E$ which is equal in $\ig{\E}$ to the word $uev$ has the form $u'e'v'$ where 
\[
u'e' \L e' \R e'v',
\]
and $e \D e'$. 
\end{lem}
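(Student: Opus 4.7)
The plan is a straightforward induction on the length of a rewriting sequence from $uev$ to the given equivalent word, using Lemma~\ref{lem_RegWords} to handle each elementary step.

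By definition, two words in $E^*$ represent the same element of $\ig{\E}$ precisely when one is obtainable from the other by a finite sequence of single applications of the defining relations from \eqref{eq:igE}. So, given a word $w$ equal to $uev$ in $\ig{\E}$, I would fix a derivation
\[
uev \equiv w_0, w_1, \dots, w_n \equiv w,
\]
with each $w_{i+1}$ obtained from $w_i$ by a single elementary rewriting step.

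I would then prove by induction on $i$ that each $w_i$ admits a decomposition $w_i \equiv u_i e_i v_i$ for which $u_i e_i \L e_i \R e_i v_i$ and $e \D e_i$ in $\ig{\E}$. The base case $i=0$ is immediate from the hypothesis, with $u_0 \equiv u$, $e_0 \equiv e$, $v_0 \equiv v$. For the inductive step, assume the claim for $w_i$; the conditions $u_i e_i \L e_i$ and $e_i v_i \R e_i$ are precisely the hypotheses of Lemma~\ref{lem_RegWords} applied to $u_i e_i v_i$, so the conclusion of that lemma provides a decomposition $w_{i+1} \equiv u_{i+1} e_{i+1} v_{i+1}$ satisfying $u_{i+1} e_{i+1} \L e_{i+1} \R e_{i+1} v_{i+1}$ together with $e_i \D e_{i+1}$. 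Transitivity of $\D$ then yields $e \D e_{i+1}$. Taking $i = n$ gives the required decomposition of $w$.

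The only real obstacle is verifying that Lemma~\ref{lem_RegWords} truly covers every rewriting step, regardless of whether the relation applied touches the distinguished letter $e_i$ or lies wholly inside $u_i$ or $v_i$; but that case analysis is exactly what the proof of Lemma~\ref{lem_RegWords} carries out, so no additional work is needed here. This is why the present lemma is genuinely an immediate corollary.
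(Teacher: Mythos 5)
Your proof is correct and matches the paper's intent exactly: the paper simply declares Lemma~\ref{lem_RegWords2} an immediate corollary of Lemma~\ref{lem_RegWords}, and the induction along a rewriting sequence together with transitivity of $\D$ is precisely the argument being left implicit.
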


\begin{thm}\label{thm:testing_reg}
Let $\E$ be an arbitrary biordered set. 
A word $w \in E^*$ represents a regular element of $\ig{\E}$ if and only if $w \equiv uev$ where 
$e \in E$ and 
$ue \L e \R ev$ in $\ig{\E}$, in which case $e \D w$ in $\ig{\E}$. 
%
%
\end{thm}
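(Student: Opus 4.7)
The plan is to prove the two directions of the biconditional separately; the ``in which case'' clause falls out of the easier direction.

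\emph{Sufficiency} ($\Leftarrow$). Assume $w \equiv uev$ with $u,v \in E^{*}$, $e \in E$, and $ue\;\L\;e\;\R\;ev$ in $\ig{\E}$. Since $e^{2}=e$, we have $w = uev = (ue)(ev)$ in $\ig{\E}$. The idempotent $e$ lies in $L_{ue}\cap R_{ev} = L_{e}\cap R_{e} = H_{e}$, so the Miller--Clifford theorem (\cite[Proposition~2.3.7]{HoBook}) yields $(ue)(ev) \in R_{ue}\cap L_{ev}$. Thus $w\;\R\;ue\;\L\;e$ and $w\;\L\;ev\;\R\;e$, so $w\;\D\;e$; since $D_{w}$ now contains an idempotent it is a regular $\D$-class, whence $w$ represents a regular element, and the ``in which case'' clause $e\;\D\;w$ is established along the way.

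\emph{Necessity} ($\Rightarrow$). The plan here is a short \emph{seeding} trick, rather than an attempt to dissect the particular spelling $w \equiv e_{1}\cdots e_{n}$ combinatorially. Assume $w$ represents a regular element. Then $R_{w}$ contains an idempotent, and because $\E(\ig{\E}) = \E$ every such idempotent lies in $E$; fix $e^{*} \in E \cap R_{w}$. Since $e^{*}\;\R\;w$, there is $z'\in \ig{\E}^{1}$ with $w = e^{*}z'$, and then $e^{*}w = e^{*}(e^{*}z') = e^{*}z' = w$ in $\ig{\E}$. Consider the word $w^{\dagger} \equiv e^{*}e_{1}\cdots e_{n} \in E^{*}$, which therefore represents the same element of $\ig{\E}$ as $w$. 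Taking $u := \epsilon$, $e := e^{*}$, $v := e_{1}\cdots e_{n}$, one sees $ue = e^{*}\;\L\;e^{*}$ trivially, and $ev = e^{*}w = w\;\R\;e^{*}$, so $w^{\dagger}$ itself already has the required form.

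To transfer this property from $w^{\dagger}$ to $w$, apply Lemma~\ref{lem_RegWords2}: every word in $E^{*}$ equal to $w^{\dagger}$ in $\ig{\E}$ has the form $u'e'v'$ with $u'e'\;\L\;e'\;\R\;e'v'$. Since $w$ is such a word, it automatically admits a factorisation of the required shape, which completes the proof. No step is technically demanding once Lemma~\ref{lem_RegWords2} is available; the only mild subtlety is that the distinguished letter $e'$ extracted for $w$ need not be any specific letter $e_{i}$ of our choice, though by Lemma~\ref{lem_RegWords2} it is $\D$-related to $e^{*}$ and hence to $w$.
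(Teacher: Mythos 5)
Your proof is correct and follows essentially the same route as the paper: the necessity direction uses exactly the paper's idea of seeding with an idempotent $e^{*}\in E\cap R_{w}$ (so that $e^{*}w=w$) and then invoking Lemma~\ref{lem_RegWords2} to transfer the factorisation back to $w$. The only cosmetic difference is in the sufficiency direction, where you obtain both regularity and $e\;\D\;w$ in one stroke via the Miller--Clifford theorem, whereas the paper first uses that $\R$ is a left congruence and then applies Miller--Clifford separately for the final clause.
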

\begin{proof}
($\Leftarrow$) Since $\R$ is a left congruence, we deduce
$
uev \R ue \L e, 
$
and hence $uev$ represents a regular element. 

($\Rightarrow$) Suppose that $w \in E^*$ represents a regular element of $\ig{\E}$. 
Then there is an idempotent $e \in E$ with $e \R w$. Then $w = ew$ in $\ig{\E}$. Now $ew$ has the form 
given in the statement of Lemma~\ref{lem_RegWords2}, with $u\equiv \epsilon$ and $v\equiv w$. Therefore, by Lemma~\ref{lem_RegWords2}, the word $w$ also has this form. 

The last clause follows easily since, by the 
Miller--Clifford Theorem, 
$ue \L e \R ev$ implies $e \D (ue)(ev) = uev$ in $\ig{\E}$. 
\end{proof}

In the special case of finite biordered sets $\E$, combining this theorem with the decidability results already obtained above yields the following result, which is the first main result of this section. 

\begin{thm}\label{thm:testing_reg_c}
There is an algorithm which takes a finite biordered set $\E$ and a word $w \in E^*$ and decides whether $w$ represents a regular element of $\ig{\E}$, and if so, returns $e, f \in E$ such that $e \R w \L f$ in $\ig{\E}$. 
\end{thm}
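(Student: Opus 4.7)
The plan is to combine Theorem~\ref{thm:testing_reg}, which gives an existential characterization of regularity, with the effective procedure of Lemma~\ref{lem_four_c}(ii), to obtain the decision algorithm. Write $w \equiv w_1 w_2 \cdots w_n$ with each $w_i \in E$. By Theorem~\ref{thm:testing_reg}, $w$ represents a regular element of $\ig{\E}$ if and only if there is a factorization $w \equiv uev$ with $e \in E$ and $ue \L e \R ev$ in $\ig{\E}$; crucially the ``middle'' letter $e$ must be one of the letters $w_i$ actually occurring in $w$, and hence only the $n$ candidate factorizations
\[
u := w_1 \cdots w_{i-1}, \quad e := w_i, \quad v := w_{i+1} \cdots w_n \qquad (1 \le i \le n)
\]
need to be examined.

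For each such $i$, the condition $ev \R e$ is tested directly by Lemma~\ref{lem_four_c}(ii), and the condition $ue \L e$ is tested by the left--right dual version of that lemma (which holds by the obvious symmetry in the defining presentation \eqref{eq:igE} of $\ig{\E}$ between the relations $e * f$ with $ef \in \{e,f\}$ and those with $fe \in \{e,f\}$). If both conditions hold for some $i$ then $w$ is regular; if no $i$ succeeds then, by Theorem~\ref{thm:testing_reg}, $w$ is not regular. This settles the decision problem.

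To extract the required idempotents $e', f' \in E$ with $e' \R w \L f'$, note that given a successful factorization $w \equiv uev$ with $ue \L e \R ev$, the idempotent $e$ lies in $L_{ue} \cap R_{ev}$, so by the Miller--Clifford theorem $w = (ue)(ev)$ sits in $R_{ue} \cap L_{ev}$; that is, $w \R ue$ and $w \L ev$. Now Lemma~\ref{lem_four_c}(ii), applied to the pair $(e,v)$, not only verifies $ev \R e$ but also returns some $q \in E$ with $ev \L q$; dually, applied to $(u,e)$ it returns some $p \in E$ with $ue \R p$. Setting $e' := p$ and $f' := q$ then gives $e' \R w \L f'$, as required.

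There is no real obstacle: once Theorem~\ref{thm:testing_reg} has pinned the ``witness'' idempotent down to a letter of $w$, all remaining steps are immediate applications of Lemma~\ref{lem_four_c} and its symmetric analogue. The only point to flag is the bounding of the search space to the $n$ letter positions in $w$, without which no termination would be visible.
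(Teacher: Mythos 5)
Your proof is correct and takes essentially the same route the paper intends: the paper states Theorem~\ref{thm:testing_reg_c} without an explicit proof, remarking only that it follows by ``combining'' Theorem~\ref{thm:testing_reg} with the preceding decidability results, and your argument is exactly that combination spelled out --- the reduction to the $n$ letter positions of $w$, the application of Lemma~\ref{lem_four_c}(ii) and its left--right dual, and the Miller--Clifford step to extract $e$ and $f$ are all sound.
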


\begin{proof}
For every decomposition $w=uev$ ($e\in E$, $u,v\in E^\ast$) check whether  $ev\R e$ and $ue\L e$ using 
Lemma \ref{lem_zero_c}.
By Theorem \ref{thm:testing_reg}, the element represented by $w$ is regular if and only if the  answer is affirmative for some decomposition. 
In this case, the requisite idempotents $e$ and $f$ can be computed by
repeatedly, letter by letter, running the algorithms from Lemma \ref{lem_three_c} (i) and (ii) on words $ev$ and $ue$ respectively. 
\end{proof}

Theorem~\ref{thm:testing_reg_c} can be interpreted as saying 
that there is an algorithm which takes an arbitrary finite biordered set, tests regularity of words,   
and for any given regular word can identify the `position' of the $\H$-class of that element in $\ig{\E}$. We would like to extend this to a solution to the word problem for regular elements
but this will only be possible under the assumption that the word problem can be solved for the maximal subgroups of $\ig{\E}$, so we now turn our attention to them. We pass from $\ig{\E}$ to its maximal subgroups using a Reidemeister--Schreier type rewriting process as first described in \cite{GR1}. We shall recall the details of this process here. In the special case of finite biordered sets, we shall explain how this process 
turns into 
 an algorithm for writing down presentations for maximal subgroups of $\ig{\E}$,
 %
 %
and as a byproduct shall obtain an algorithm which gives Rees matrix representations for each of the regular $\D$-classes of $\ig{\E}$.

In what follows, $S$ will continue to stand for an idempotent generated semigroup, with the set of idempotents $E=E(S)$ and the corresponding biorder $\E=\E(S)$, and
let us fix an arbitrary $e \in E$. Let $D$ be the $\D$-class of $e$ in $\ig{\E}$. 
Let $R_i \; (i \in I)$ and $L_j \; (j \in J)$ be the sets of $\R$- and $\L$-classes respectively in $D$.
We denote the $\H$-classes in $D$ by $H_{i,j} = R_i \cap L_j$.
Unfortunately, this is yet another indexing of $\H$-classes that the reader will have to contend with -- on the plus side, it is compatible with the earlier $H_{e,p}$ notation, provided the indexing sets $I$ and $J$ are taken to consist of representatives.
Let 
\[
K = \{ (i,j) \::\: \mbox{$H_{i,j}$ is a group} \},
\]
 and denote by $e_{i j}$ the unique idempotent in $H_{i,j}$ ($(i,j)\in K$). 
For notational convenience assume that the symbol $1$ belongs to both index sets $I$ and $J$,
and that $e=e_{11}$.
Consider the $\R$-class $R = R_1$ and its $\H$-classes $H_{1,j} \ (j \in J)$. 
We have seen that the generators $E$ act on this set of $\H$-classes,
and we now translate this action to 
an action $(j,w) \mapsto jw$ of the free monoid $E^*$ on the index set $J \cup \{ 0 \}$.
Specifically, for each letter $g \in E$, we set
\begin{align*}
(j,g) & \mapsto 
\begin{cases}
l  &  \mbox{if $H_{1,j} g = H_{1,l}$ in $\ig{\E}$}, \\
0 & \mbox{otherwise},
\end{cases}
\end{align*}   
and extend by freeness.
By Lemma~\ref{lem_MatchingAction} the $\H$-classes in the $\R$-class of $e$ in $\ig{\E}$ are in natural bijective correspondence with the $\H$-classes in the $\R$-class of $e$ in $S$. If we index the latter as $H_{1,j}^S \ (j \in J)$ in the natural way we have
\[
H_{1,j}^S g = H_{1,jg}^S \ \mbox{if and only if} \ jg \neq 0. 
\]
This reflects the fact that the action of $E$ on the $\H$-classes inside an $\R$-class is entirely determined by the biorder $\E$ and does not depend on the actual semigroup. 

Arguing as in \cite{GR1}, there exist words $r_j \in (E \cap D)^*$ for $j  \in J$, satisfying $1r_j=j$ and every prefix of every $r_j$ is equal to some $r_l$. We call this set of words a \emph{Schreier system}. Furthermore, for each $j\in J$, there exists a word $r_j' \in (E \cap D)^*$ such that
$j r_j' = 1$ and
\begin{align*}
x r_j r_j' &= x \quad \mbox{in $\ig{\E}$ for all $x \in H_{1,1}$, and} \\
x r_j' r_j &= x \quad \mbox{in $\ig{\E}$ for all $x \in H_{1,j}$.} 
\end{align*}
In fact, such words  $r_j$, $r_j'$ $(j \in J)$ can be determined solely from the biorder $\E$ in the following inductive way. To begin with set $r_1 \equiv r_1' \equiv \epsilon$, $J_1 = \{ 1 \}$. Suppose that at step $k$ 	a set $J_k\subseteq J$ is computed, and for every $j \in J_k$ words $r_j$, $r_j'$ satisfying the required properties are also computed. 
Suppose $J_k\neq J$.
For every $j \in J_k\setminus J_{k-1}$ and every $f \in E$ with 
$0 \neq jf \in J \setminus J_k$
by Proposition~\ref{prop_pure} there exist idempotents $g, h \in E$ such that $g \in L_j$, $h \in L_{jf}$, $g \R h$, $fg=g$, and $gf=h$. Now let 
$r_{jf} \equiv r_j h$, $r_{jf}' \equiv gr_j'$, and add $jf$ to $J_{k+1}$. 
That every $j\in J$ will be reached in the course of this procedure follows from the definition of $\R$ and the fact that $E$ is a generating set.
Hence, when $E$ is finite this becomes an actual algorithm to compute the $r_j$, $r_j'$ $(j \in J)$ from the biorder $\E$.

Using these definitions we shall now work towards a presentation for the group $H = H_{1,1}$. 
By results from \cite{Ru}, the elements of $\ig{\E}$ represented by words 
$e r_j f r_{jf}' \in E^*$ with $j \in J$, $f \in E$, $jf \neq 0$,
form a generating set of $H$. Motivated by this, introduce a new alphabet
\[
B= \{ [j,f]: j \in J, f \in E, jf \neq 0 \}.
\]
Furthermore, let $\psi:B^* \rightarrow E^*$ be the unique homomorphism extending
$$
[j,f] \mapsto e r_j f r_{j f}'.
$$
Now define
\[
\phi: \{ (j,w):\ j \in J, w \in E^*, j w \neq 0 \} \rightarrow B^*
\]
by $\phi(j,1) \equiv 1$ and, for $w \equiv e_1 \cdots e_t$ with $t \geq 1$,
\[
\phi(j,e_1\cdots e_t) \equiv [j, e_1] [j e_1, e_2] [j e_1e_2, e_3] \cdots [j e_1 e_2 \cdots e_{t-1}, e_t] \in B^*.
\]
As a straightforward consequence of this definition we have
\begin{equation}\label{eqn:hom2}
\phi(j,w_1w_2) \equiv \phi(j,w_1)\phi(jw_1,w_2) \quad (j \in J, w_1, w_2 \in E^*, j w_1 w_2 \neq 0).
\end{equation}
Note that $\phi$ rewrites a pair $(j,w)$ into a word over $B$ whose image under $\psi$ is a word over $E$ representing the element
$e r_j w r_{jw}'$ in $\ig{\E}$. In particular, if $w \in E^*$ represents an element of $H$ then we have
\begin{equation}
\label{eqRev4}
\psi \phi(1,w) = w
\end{equation}
in $\ig{\E}$. 

\begin{thm} \label{thm:prescomp} 
Let $\mathfrak{R}$ denote the set of relations in the presentation \eqref{eq:igE} of $\ig{\E}$. Then, with the above notation,
a monoid presentation for the maximal subgroup group $H = H_e$ of $\ig{\E}$ is given by
\begin{align*}
\langle \, B \ | \ & \phi(j,\alpha) = \phi(j,\beta)    & &(j \in J, \ (\alpha=\beta) \in \mathfrak{R}, \ j \alpha \neq 0), \\
        & \phi(1, er_j a r_{j a}') = [j,a]   & &(j \in J, \ a \in E, \ j a \neq 0),\\
        & \phi(1,e) = 1 \, \rangle.
\end{align*}
If the biorder $\E$ is finite then so is the above presentation, and there is an algorithm to compute the presentation from $\E$ and any given $e\in E$.
\end{thm}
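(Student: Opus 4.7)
The plan is to apply the Reidemeister--Schreier rewriting framework for maximal subgroups of semigroups, developed by Ru\v{s}kuc in \cite{Ru} and already specialised to $\ig{\E}$ in \cite{GR1}. The presentation \eqref{eq:igE} supplies the input data, and the Schreier systems $\{r_j\}$ and $\{r_j'\}$ constructed inductively in the paragraph preceding the theorem provide the combinatorial ingredients needed to rewrite words representing elements of $H = H_e$ into words over the proposed generating set $B$.

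The proof breaks into three standard steps. First I would verify that $B$ generates $H$: if $w \in E^*$ represents an element of $H$ then $1 \cdot w = 1$ in the action on $J \cup \{0\}$, and unpacking the definition of $\phi$ shows that $\phi(1,w)$ is a word over $B$ representing the same element as $w$ in $\ig{\E}$. Second, I would check that each of the three families of proposed relations holds in $H$: the relations $\phi(j,\alpha) = \phi(j,\beta)$ hold because $\alpha = \beta$ is a defining relation of $\ig{\E}$, whence $e r_j \alpha r'_{j\alpha} = e r_j \beta r'_{j\alpha}$ in $\ig{\E}$, and by \eqref{eqn:hom2} each side equals its $\phi$-image; the relations of the second family merely unfold the definition of a generator $[j,a]$; and the third relation identifies the group identity $e$ with the empty word. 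Third, one must establish completeness: any consequence of the defining relations of $\ig{\E}$ that connects two words in $H$ can be traced, step by step through $\phi$, using the relations of families one and two. This is the main obstacle and is where the bulk of the bookkeeping lies, but it is precisely the content of the general rewriting theorem of \cite{Ru}, which only needs to be specialised to the current setting.

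For the final clause, I would argue that when $\E$ is finite the presentation is effectively computable. Each $\L$-class in $D_e$ is regular and hence contains an idempotent; since distinct $\L$-classes are disjoint, this yields an injection $J \hookrightarrow E$, so $|J| \leq |E|$ and the generating set $B$ together with all three relation families is finite. The algorithmic ingredients needed are already in place: Lemma~\ref{lem_zero_c} identifies the idempotents lying in $D_e$ and thereby $J$; the inductive procedure preceding the theorem becomes an actual algorithm for the Schreier words $r_j, r'_j$, terminating because every $\L$-class in $D_e$ is reachable via an $E$-chain as in Lemma~\ref{lem_zero}; and Lemma~\ref{lem_four_c} computes the action of $E^*$ on $J$, from which the values of $\phi$ are obtained by direct substitution into the formula defining $B$.
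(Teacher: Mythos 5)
Your proposal is correct and follows essentially the same route as the paper: the fact that the presentation defines $H=H_e$ is delegated to the general rewriting theorem of \cite{Ru}, and the finiteness and computability of the presentation for finite $\E$ are obtained from the computability of the action on the $\H$-classes $H_{1j}$ (Lemma~\ref{lem_three_c}) together with the effective construction of the Schreier words $r_j, r_j'$. Your additional checks (that $B$ generates $H$, that the listed relations hold, and the bound $|J|\leq|E|$ via idempotent representatives of $\L$-classes) are sound elaborations of what the paper leaves implicit.
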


\begin{proof}
That the presentation defines $H=H_e$ is proved in \cite[Theorem 2.9]{Ru},
and it clearly is finite when $\E$ is. The existence of an algorithm to compute the presentation when $\E$ is a finite biordered set follows from the fact that 
the action of $\ig{\E}$ on $\{H_{1,j}\::\: j\in J\}$ is computable in the sense of Lemma \ref{lem_three_c}.  
%
\end{proof}


\begin{lem}\label{lem:WP}
Let $w_1, w_2 \in E^*$ such that $e \R w_1 \H w_2$ 
%
%
in $\ig{\E}$. Then $w_1 = w_2$ in $\ig{\E}$ if and
only if $\phi(1,w_1) = \phi(1,w_2)$ in $H$.
\end{lem}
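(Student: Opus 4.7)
The plan is to exploit two facts noted earlier in the excerpt. First, as remarked just before Theorem~\ref{thm:prescomp}, the word $\phi(1,w)$ viewed in $\ig{\E}$ represents $e w r_{1\cdot w}'$. Second, Theorem~\ref{thm:prescomp} gives a presentation for $H$ with a defining relation $\phi(j,\alpha) = \phi(j,\beta)$ for every defining relation $\alpha = \beta$ of $\ig{\E}$ (whenever $j\alpha \neq 0$). The hypothesis $e \R w_1 \H w_2$ forces both $w_i$ to lie in $H_{1j}$ for a common $j = 1 \cdot w_1 = 1 \cdot w_2$, so $\phi(1, w_i)$ represents $e w_i r_j'$ in $\ig{\E}$.

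For the backward direction, suppose $\phi(1, w_1) = \phi(1, w_2)$ in $H$. Since $H$ sits inside $\ig{\E}$ as a subgroup, the equality persists in $\ig{\E}$, so $e w_1 r_j' = e w_2 r_j'$. Right-multiplying by $r_j$ and using the Schreier identity $x r_j' r_j = x$ valid on $H_j$ (and $e w_i \in H_j$) yields $e w_1 = e w_2$. Combining with $e w_i = w_i$ (from $e \R w_i$) gives $w_1 = w_2$ in $\ig{\E}$.

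For the forward direction, take a chain $w_1 \equiv u_0, u_1, \dots, u_n \equiv w_2$ of elementary rewrites, each applying a single defining relation of $\ig{\E}$. All $u_k$ represent the same element of $\ig{\E}$, hence represent elements of $H_{1j}$; since $0$ is absorbing in the $E^*$-action, $1 \cdot u_k = j$ and every prefix of $u_k$ has nonzero action on $1$, so $\phi(1, u_k)$ is well-defined. At a single rewrite $u_k \equiv x \alpha y \to x \beta y \equiv u_{k+1}$, applying the multiplicative property \eqref{eqn:hom2} twice gives
\[
\phi(1, u_k) \equiv \phi(1, x)\, \phi(1 \cdot x, \alpha)\, \phi(1 \cdot x \alpha, y),
\]
and an analogous expression for $\phi(1, u_{k+1})$ (using that $1 \cdot x \alpha = 1 \cdot x \beta$ because $\alpha = \beta$ in $\ig{\E}$ and, by Lemma~\ref{lem_MatchingAction}, the action on $\H$-classes is determined by $\ig{\E}$). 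The middle factors are equal in $H$ by a defining relation of the presentation, so $\phi(1, u_k) = \phi(1, u_{k+1})$ in $H$, and induction on $n$ delivers $\phi(1, w_1) = \phi(1, w_2)$.

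The main obstacle is the standard Reidemeister--Schreier bookkeeping: verifying that $\phi$ is well-defined on every intermediate word (equivalently, that the action $1 \cdot u_k$ stays constant along the rewriting chain and that every prefix acts nonzero on $1$), and that the element $e w_i r_j'$ genuinely lies in $H = H_{11}$ so that the Schreier inversion identity applies. Both points reduce to the fact (established via Lemma~\ref{lem_MatchingAction}) that the $E^*$-action on the $\H$-classes within $R_1$ is encoded in the biorder $\E$.
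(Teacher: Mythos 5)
Your proof is correct and takes essentially the same approach as the paper: your backward direction is the paper's argument (the equality of the words $\phi(1,w_i)$, which represent $w_ir_j'$, is transported from $H$ into $\ig{\E}$ --- the paper packages exactly this step as the substitution homomorphism $\psi$ --- and then $r_j'$ is cancelled via the Schreier identity $xr_j'r_j=x$ on $H_j$), while your forward direction writes out in full the standard rewriting induction that the paper disposes of by citing \cite[Lemma~2.10]{Ru}. The bookkeeping you flag does go through: every intermediate word $u_k$ equals $w_1$ in $\ig{\E}$, so for any prefix $p$ of $u_k$ the element $eu_k=ew_1$ lies in the principal right ideal generated by $ep$, which is contained in that generated by $e$, and since $eu_k\R e$ all three coincide, giving $ep\R e$; hence $1\cdot p\neq 0$ for every prefix and $1\cdot u_k=j$ throughout the chain.
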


\begin{proof}
The direct implication  is an immediate consequence of \cite[Lemma~2.10]{Ru}.
For the converse,
suppose $\phi(1,w_1) = \phi(1,w_2)$ where $w_1, w_2 \in H_{1,j}$ and $e \R w_1$.
Then
\begin{align*}
 \phi(1,w_1) = \phi(1,w_2)
  & \Rightarrow   \phi(1,w_1)\phi(j,r_j') = \phi(1,w_2)\phi(j,r_j') \\
  & \Rightarrow   \phi(1,w_1r_j') = \phi(1,w_2r_j'),
\end{align*}
by \eqref{eqn:hom2}. But since $w_1, w_2 \in H_{1,j}$, both $w_1r_j', w_2r_j' $ represent elements of $H$. Thus, using \eqref{eqRev4} , we have
$$
w_1 r_j' = \psi\phi(1,w_1r_j') = \psi\phi(1,w_2r_j') = w_2 r_j',
$$
and so
$$
w_1 = w_1 r_j' r_j = w_2 r_j' r_j = w_2
$$
in $\ig{\E}$, as required.
\end{proof}

Combining all of these results we arrive at the second main result of this section. 

\begin{thm} \label{thm:reg}
There is an algorithm which takes any finite biordered set $\E$ and computes finite presentations of each of the maximal subgroups $H_e$ $(e \in E)$ of the free idempotent generated semigroup $\ig{\E}$. If each of these finitely presented groups $H_e$ has solvable word problem, then there is an algorithm which given any two words $u, v \in E^*$ decides whether both $u$ and $v$ represent regular elements of $\ig{\E}$ and, if they do, decides whether $u=v$ in $\ig{\E}$. 
\end{thm}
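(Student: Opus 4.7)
The plan is to stitch together the algorithmic tools developed earlier in this section. The first assertion is immediate: for each of the finitely many $e \in E$, invoke Theorem~\ref{thm:prescomp}, which produces, effectively from $\E$, a finite presentation of $H_e$.

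For the word problem on regular words, I would first apply Theorem~\ref{thm:testing_reg_c} to each of $u$ and $v$; if either fails to represent a regular element, the algorithm is done, and otherwise we obtain idempotents $e_u, f_u, e_v, f_v \in E$ with $e_u \R u \L f_u$ and $e_v \R v \L f_v$. An equality $u = v$ in $\ig{\E}$ forces $u \H v$, which is in turn equivalent to $e_u \R e_v$ together with $f_u \L f_v$; both conditions are decidable by Lemma~\ref{lem_zero_c}, so if either fails the algorithm reports $u \neq v$. Assuming $u$ and $v$ lie in a common $\H$-class, fix the reference idempotent $e := e_u$, let $D = D_e$, and follow the constructive procedure preceding Theorem~\ref{thm:prescomp} to compute the index set $J$ of $\L$-classes inside $D$, the partial action of $E^*$ on $J \cup \{0\}$ (via Lemma~\ref{lem_three_c}), and a Schreier system $\{r_l, r_l' : l \in J\}$. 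Using Lemma~\ref{lem_three_c}(ii), find the unique $j \in J$ with $H_u = H_{1j}$; automatically $H_v = H_{1j}$ too. Next, compute the rewritings $\phi(1,u), \phi(1,v) \in B^*$, which both represent elements of $H = H_e$. Lemma~\ref{lem:WP}, applied with $w_1 \equiv u$ and $w_2 \equiv v$, then asserts that $u = v$ in $\ig{\E}$ if and only if $\phi(1,u) = \phi(1,v)$ in $H$, and by the standing hypothesis the word problem in $H$ is solvable, so this final test can be carried out algorithmically.

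Strictly speaking the heavy lifting has already been done by Theorem~\ref{thm:prescomp} and Lemma~\ref{lem:WP}, so what remains is essentially bookkeeping. The main subtlety to watch is that every ingredient feeding the rewriting procedure, namely the Schreier representatives, the action on $J$, the generating set $B$ for $H$, and the identification of the $\H$-class of each regular word, must be extracted effectively from the finite biorder $\E$ alone, without recourse to computations inside the potentially infinite semigroup $\ig{\E}$. This is precisely what the algorithmic lemmas assembled earlier in the section, in particular Lemmas~\ref{lem_zero_c} and~\ref{lem_three_c}, guarantee.
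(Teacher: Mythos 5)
Your proposal is correct and follows essentially the same route as the paper, which gives no separate argument beyond "combining all of these results": Theorem~\ref{thm:prescomp} for the presentations, Theorem~\ref{thm:testing_reg_c} and Lemma~\ref{lem_zero_c} to test regularity and locate the common $\H$-class, and Lemma~\ref{lem:WP} to reduce the equality test to the word problem in $H_e$. Your explicit bookkeeping of why each ingredient (Schreier system, action on $J$, the rewriting $\phi$) is computable from $\E$ alone is exactly the content the paper relies on from the discussion preceding Theorem~\ref{thm:prescomp}.
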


\section{A Rees matrix representation for regular $\D$-classes of $\ig{\E}$}\label{sec_RM}

In this section we continue the investigation of the regular part of $\ig{\E}$ started in the previous sections, concentrating now on finding Rees matrix representations for the completely $0$-simple principal factors of $\ig{\E}$. As with the results above, these representations will be computable when the biorder $\E$ is finite. 

A $\J$-class $C$ in an arbitrary semigroup gives rise to the associated \emph{principal factor} $\overline{C} = C \cup \{0\}$, with multiplication:
\[
x \cdot y = \begin{cases}
xy & \mbox{if $x$, $y$, $xy \in C$} \\
0 & \mbox{otherwise.}
\end{cases}
\] 
It is known that $\overline{C}$ is either a $0$-simple semigroup or a semigroup with zero multiplication (also known as a null semigroup); see \cite[Theorem~3.1.6]{HoBook}. Under some additional finiteness hypotheses, $C$ may happen to be completely $0$-simple, in which case it is isomorphic to a Rees matrix semigroup $\mathcal{M}^0[G;I,J;P]$. Here, $G$ is a group, $I$ and $J$ are index sets,  and $P = (p_{j i})_{j \in J, i \in I}$ is a $J \times I$ matrix with entries from $G \cup \{ 0 \}$ with at least one non-zero entry in every row and column, known as the Rees structure matrix. 
The set of elements is 
\[
\mathcal{M}^0[G;I,J;P]
= \{ (i,g,j) : i \in I, g \in G, j \in J \} \cup \{ 0 \},
\]
and multiplication is defined by 
\[
(i,g,j)(k,h,l) = \begin{cases}
(i, gp_{jk}h, l) & \mbox{if $p_{jk} \neq 0$} \\
0 & \mbox{otherwise},
\end{cases}
\]
and
\[
0 (i,g,j) = (i,g,j)0 = 00 = 0. 
\]
A regular $\D$-class $D$ may or may not be a $\J$-class. When the set of idempotents in $D$ is finite then this certainly is the case \cite[Proposition~3.2.1]{HoBook}, and the corresponding principal factor is completely $0$-simple. 

In what follows we take an arbitrary $\D$-class $D=D_e$ of an idempotent $e$ in $\ig{\E}$, and write down a matrix $P = (p_{j i})_{j \in J, i \in I}$. This matrix turns out to be a Rees structure matrix whenever $D$ is a $\J$-class and the corresponding principal factor is completely $0$-simple. 

Continuing to use the notation and terminology introduced in Section~\ref{sec_WPReg}, recall that the elements $er_j \ (j \in J)$ are representatives of the $\H$-classes $H_{1, j} \ (j \in J)$ in the sense that $er_j \in H_{1,j}$. Next, we construct representatives for the $\H$-classes $H_{i, 1} \ (i \in I)$ in the $\L$-class of $e$ as follows. For every $i \in I$ let $j(i) \in J$ be chosen so that $(i, j(i)) \in K$, i.e. $H_{i, j(i)}$ is a group. From the definitions of $r_j$, $r_j'$ we know that $H_{1, j(i)} r_{j(i)}' = H_{1,1}$. Green's Lemma then implies that $H_{i,j(i)} r_{j(i)}' = H_{i, 1}$. In particular this means that $e_{i,j(i)}r_{j(i)}' \in H_{i, 1}$. 

Now define 
\[
p_{j i} = \begin{cases}
e r_j e_{i, j(i)} r_{j(i)}' &
\mbox{if $(i,j) \in K$} \\
0 &
\mbox{otherwise}. 
\end{cases}
\]
Note that $(i,j) \in K$ means $H_{i, j}$ is a group which together with $er_j \in H_{1, j}$ and $e_{i,j(i)}r_{j(i)}' \in H_{i, 1}$ implies (by the Miller--Clifford theorem) that $p_{j i} \in H_{1, 1}=H$.

It follows from \cite[Theorem~3.2.3]{HoBook} that $P=(p_{j i})_{j \in J, i \in I}$ is a Rees structure matrix for $\overline{D}$ provided $D$ is a $\J$-class and the corresponding principal factor is completely $0$-simple. 
%
Indeed, following the proof of \cite[Theorem~3.2.3]{HoBook} we can write down explicitly an isomorphism between the principal factor $\overline{D}$ and the Rees matrix semigroup $\mathcal{M}^0[H; I, J; P]$. We record the details of this here, since this correspondence will be important in what follows. 

For each $j \in J$ we have chosen and fixed a representative $\overline{er_j}$ of the $\H$-class $H_{1, j}$ of $\ig{\E}$. The bar here signifies the element of $\ig{\E}$ represented by this word. 
Also, for each $i \in I$  we have chosen and fixed a representative $\overline{e_{i, j(i)} r_{j(i)}'}$ of the $\H$-class $H_{i ,1}$. Now, the general theory of regular $\D$-classes tells us that every element of the $\D$-class $D = D_e \subseteq \ig{\E}$ can be written uniquely as
$
\overline{e_{i, j(i)} r_{j(i)}'} \; h \; \overline{er_j} 
$
for some $i \in I$, $j \in J$ and $h \in H$. Then the mapping $\Theta$ defined by
\begin{align}
\overline{e_{i, j(i)} r_{j(i)}'} \; h \; \overline{er_j} \mapsto (i,h,j), 
\quad
0 \mapsto 0, \label{eq_RM_concrete}
\end{align}
is an isomorphism between the principal factor $\overline{D}$ and the Rees matrix semigroup 
$\mathcal{M}^0[H; I, J; P]$ (see \cite[page~74]{HoBook} for a proof that this is an isomorphism). 
The above bar notation will remain in force for the rest of this section, with the aim of
making clear the distinctions and connections between: 
\begin{itemize}
\item[(i)] words over alphabet $E$ that represent elements of the $\D$-class $D$; 
\item[(ii)] the actual elements of the semigroup $\ig{\E}$ from this $\D$-class; and 
\item[(iii)] the corresponding elements of the Rees matrix semigroup $\mathcal{M}^0[H; I, J; P]$. 
\end{itemize}


\begin{rmk}\label{generators-H}
It also turns out (see \cite[Section~3]{GR1}) that the $p_{j i}$ give us an alternative generating set for the maximal subgroup $H$. 
(This remains true even without the assumption that the principal factor $\overline{D}$ is completely $0$-simple.)
In fact, for technical reasons, we prefer the generating set consisting of their inverses, namely
\begin{align}
\{ \overline{e r_{j(i)} e_{i j} r_j'} \; : \; (i,j) \in K \}. \label{H-gen}
\end{align}
\samepage To check they are indeed mutually inverse observe that $\overline{e r_{j(i)} e_{i j} r_j'} \in H_{1, 1}$, 
following the same reasoning as for $p_{ij}$, and then in $\ig{\E}$ we have: 
\begin{align*}
(e r_j e_{i, j(i)} r_{j(i)}')
(e r_{j(i)} e_{i j} r_j' )
& =   
e r_j e_{i, j(i)} r_{j(i)}'
r_{j(i)} e_{i j} r_j' 
& & \mbox{(since $p_{ji} \in H_{1, 1}$)}
\\
& =  
e r_j e_{i, j(i)} e_{i j} r_j' 
& & \mbox{(since $er_ie_{i, j(i)} \in H_{1, j(i)}$)}
\\
& =  
e r_j e_{i j} r_j' 
& & \mbox{(since $e_{i,j(i)} \R e_{ij}$)}
\\
& =  
e r_j r_j' 
& & \mbox{(since $er_j \L e_{i j}$)}
\\
& = e 
& & \mbox{(since $e \in H_{1 ,1}$).}
\end{align*}
\end{rmk}

Inspired by this particular generating set of $H$, in \cite[Theorem~5]{GR1} a presentation for the group $H$ is given in terms of the generators 
\begin{align}
F = \{ f_{ij}\; :\; (i,j)\in K \},  \label{eq_FDef}
\end{align}
where $f_{ij}$ is the formal symbol designated to represent the generator $\overline{e r_{j(i)} e_{i j} r_j'}$ from \eqref{H-gen}.
To make this idea more precise, we define a mapping $\widetilde{\rho} : (F \cup F^{-1})^* \rightarrow (E \cap D)^*$ as the unique (monoid)
homomorphism extending
\begin{align}
f_{i j} 			\mapsto e r_{j(i)} e_{i j} r_j' , \quad \mbox{and} \quad 
f_{i j}^{-1}		 \mapsto e r_j e_{i, j(i)} r_{j(i)}'  \quad 
\mbox{for $(i,j) \in K$.}  \label{tilde-rho}
\end{align}
(The motivation for choosing the notation $\widetilde{\rho}$ will become apparent shortly.) Now we have that $F\widetilde{\rho}$ is a set
of words over $E\cap D$ representing the generating set \eqref{H-gen} of $H$.

The key relations in this presentation are determined by so-called \emph{singular squares}, a notion originally due to Nambooripad \cite{Na}.  
A quadruple $(i,k;j,l)\in I\times I\times J\times J$ is a \emph{square}
if $(i,j),(i,l),(k,j),(k,l)\in K$.
It is a \emph{singular square} if, in addition, there exists an idempotent $f\in E$ such that
one of the following two dual sets of conditions holds:
\begin{align}
\label{liseq2}
& fe_{ij}=e_{ij},\ fe_{kj}=e_{kj},\ e_{ij}f=e_{il},\ e_{kj}f=e_{kl},\ \mbox{or}\\
\label{liseq3}
& e_{ij}f=e_{ij},\ e_{il}f=e_{il},\ fe_{ij}=e_{kj},\ fe_{il}=e_{kl}.
\end{align}
We will say that $f$ \emph{singularises} the square.
Let $\Sigma_{LR}$ (respectively $\Sigma_{UD}$) be the set of all singular squares for 
which condition \eqref{liseq2}
(resp. \eqref{liseq3}) holds, and
let $\Sigma=\Sigma_{LR}\cup\Sigma_{UD}$, the set of all singular squares.
We call the members of $\Sigma_{LR}$ the \emph{left-right} singular squares, and those of $\Sigma_{UD}$ the \emph{up-down} singular squares.

Combining the above observations with the presentation for the group $H$ obtained in \cite[Theorem~5]{GR1} yields the following result. 

\begin{thm}\label{thm_presentation}
Let $S$ be a semigroup with a non-empty set of idempotents $E$,
let $\ig{\E}$ be the corresponding free idempotent generated semigroup,
let $e\in E$ be arbitrary, and let $H$ be the maximal subgroup of $e$ in $\ig{\E}$.
With the rest of notation as introduced throughout this section, a presentation for $H$ is given by
\begin{align}
\label{rels51}
&\langle f_{ij}\ ((i,j)\in K) &&|&&
f_{ij}=f_{il} && ((i,j),(i,k)\in K,\ r_je_{il}=r_{j\cdot e_{il}}),
\\
\label{rels52}
& && &&f_{i,j(i)}=1 && (i\in I),
\\
\label{rels53}
& && &&f_{ij}^{-1}f_{il}=f_{kj}^{-1}f_{kl} && ((i,k;j,l)\in\Sigma) \rangle.
\end{align}
Furthermore, if $D_e = J_e$ in $\ig{\E}$, and the corresponding principal factor $\overline{D}_e$ is completely $0$-simple, then 
\begin{align}\label{lab_RMS}
\overline{D}_e \cong \mathcal{M}^0[H; I, J; P]
\end{align}
where 
\[
p_{j i} = \begin{cases}
f_{i j}^{-1} & (i,j) \in K \\
0 & \mbox{otherwise}. 
\end{cases}
\]
\end{thm}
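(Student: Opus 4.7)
The plan handles the two parts of the theorem separately since both follow fairly directly from material already developed.

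For the presentation \eqref{rels51}--\eqref{rels53}, my starting point is Theorem~\ref{thm:prescomp}, which presents $H=H_e$ on the generating set $B=\{[j,f]:j\in J,\,f\in E,\,jf\neq 0\}$. I would then perform a Tietze transformation to the alternative generating set $F=\{f_{ij}:(i,j)\in K\}$. The crucial identification enabling this transition is that $f_{ij}=[j(i),e_{ij}]$, which holds because $j(i)\cdot e_{ij}=j$ in the action on $\H$-classes; this in turn follows from $e_{ij}\in R_i\cap L_j$ together with the fact that $H_{i,j(i)}$ is a group. Once the change of generators is made, the defining relations of Theorem~\ref{thm:prescomp} reorganise into three families: trivial actions on $\H$-classes yield \eqref{rels51}, the choice of $r_{j(i)}$ as Schreier representative along the ``diagonal'' produces the normalisation \eqref{rels52}, and singular squares, interpreted via conditions \eqref{liseq2}--\eqref{liseq3}, give the square relations \eqref{rels53}. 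This is precisely the content of \cite[Theorem~5]{GR1}, which I would invoke rather than re-derive.

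For the Rees matrix description \eqref{lab_RMS}, I would apply \cite[Theorem~3.2.3]{Ho}. Under the assumption that $\overline{D}_e$ is completely $0$-simple, that result supplies an isomorphism $\Theta\colon\overline{D}_e\to\mathcal{M}^0[H;I,J;P']$ via the formula \eqref{eq_RM_concrete}, where the sandwich entries are $p'_{ji}=\overline{er_j}\cdot\overline{e_{i,j(i)}r_{j(i)}'}$ for $(i,j)\in K$ and $0$ otherwise. By construction this is exactly the element $p_{ji}\in H$ introduced in the discussion preceding the theorem. The computation already displayed in the excerpt verifies that $p_{ji}f_{ij}=e$ in the group $H$, so $p'_{ji}=p_{ji}=f_{ij}^{-1}$, which is precisely the stated form of the sandwich matrix.

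The main obstacle, if one must be named, is purely notational: keeping track of which $\H$-class each intermediate word lies in, so that the products defining $p_{ji}$ and the isomorphism $\Theta$ are non-zero inside $D_e$ and land in the correct groups $H_{ij}$. Once the Green's-relation positions of the building blocks $er_j$, $r_j'$, $e_{i,j(i)}r_{j(i)}'$, and $f_{ij}$ are fixed, both assertions reduce to direct citations together with the single computation of mutual inverses already recorded in the text, with no genuinely new combinatorial content required.
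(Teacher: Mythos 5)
Your proposal is correct and follows essentially the same route as the paper, which likewise obtains the presentation by citing \cite[Theorem~5]{GR1} (the identification $f_{ij}=[j(i),e_{ij}]$ being the link to Theorem~\ref{thm:prescomp}) and obtains the Rees matrix form from \cite[Theorem~3.2.3]{Ho} together with the displayed computation showing $p_{ji}f_{ij}=e$, i.e.\ $p_{ji}=f_{ij}^{-1}$. No substantive difference.
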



Suppose now that $e$ is an idempotent such that 
$D_e = J_e$ in $\ig{\E}$ and the corresponding principal factor $\overline{D}_e$ is completely $0$-simple. 
Since $\ig{\E}$ is idempotent generated it follows by a result of  Fitz-Gerald \cite{FG} that every element of $D_e$ can be written as a product of idempotents from $D_e$. Given such a word $w \in (E \cap D)^*$ we now describe how to obtain a triple $(i, \gamma, j)$ where $\gamma \in (F \cup F^{-1})^*$ representing the same element of $D_e$, giving an explicit description of the isomorphism \eqref{lab_RMS}
on the level of words.

Define 
\begin{align*}
\widetilde{\pi} : (E \cap D)^* & \rightarrow  (F \cup F^{-1})^* \\
e_{i_1 j_1}e_{i_2 j_2} \ldots e_{i_m j_m}
& \mapsto 
f_{i_1 j_1}f_{i_2 j_1}^{-1}
f_{i_2 j_2}f_{i_3 j_2}^{-1}
\ldots
f_{i_{m-1} j_{m-1}}f_{i_{m} j_{m-1}}^{-1}
f_{i_m j_m}.
\end{align*}
Noting that under the isomorphism $\Theta$ defined in \eqref{eq_RM_concrete} 
the idempotent 
$e_{i j}$ corresponds 
to the idempotent $(i, f_{i j}, j)$ in $\mathcal{M}^0[H; I, J; P]$, it follows that when 
$$w \equiv e_{i_1 j_1}e_{i_2 j_2} \ldots e_{i_m j_m}$$
represents an element of $D$ it corresponds to the triple $(i_1, \widetilde{\pi}(w), j_m)$. Hence, the mapping
\begin{align}
\pi: \{
\mbox{
$w \in (E \cap D)^*$: $w$ represents an element of $D$
}  \} & \rightarrow 
I \times (F \cup F^{-1})^* \times J  \notag \\
w\equiv e_{i_1 j_1}e_{i_2 j_2} \ldots e_{i_m j_m} & \mapsto  (i_1, \widetilde{\pi}(w), j_m) \label{pidef}
\end{align}
takes any word $w$ over $E \cap D$ which represents an element of $D$ and returns  a triple $\pi(w) = (i,u,j) \in I \times (F \cup F^{-1})^* \times J$  with the property that the element of $\ig{\E}$ represented by the word $w$ corresponds to the element $(i,u,j)$ of the Rees matrix semigroup $\mathcal{M}^0[H; I, J; P]$ via the isomorphism $\Theta$ defined in \eqref{eq_RM_concrete}, where the last $u$ is interpreted as an element of $H$. 

Going the other way, we define a mapping
\begin{align}
\rho: 
I \times (F \cup F^{-1})^* \times J 
& \rightarrow  (E \cap D)^*  \notag \\
(i,w,j) & \mapsto 
e_{i ,j(i)} r_{j(i)}' \widetilde{\rho}(w) e_{1 1} r_j, \label{rhodef}
\end{align}
where $\widetilde{\rho}$ is the homomorphism $(F \cup F^{-1})^* \rightarrow (E \cap D)^*$ already defined in \eqref{tilde-rho}.
Now we have that for every word $w \in (F \cup F^{-1})^*$ the word $\widetilde{\rho}(w)\in (E \cap D)^\ast$ represents the same element of $H$
(considered as a maximal subgroup of $\ig{\E}$) as $w$ does (with respect to the presentation given in Theorem \ref{thm_presentation}). 
It follows that 
$\rho(i,w,j) = 
e_{i ,j(i)} r_{j(i)}' \widetilde{\rho}(w) e_{1 1} r_j$ 
is a word over $E \cap D$ representing the element 
\begin{align}
\overline{e_{i, j(i)} r_{j(i)}'} \; h \; \overline{er_j}
\label{eq_ddagger}
\end{align}
where $h \in H$ is the element of $H$ represented by the word $w$. 
This element of $\ig{\E}$ in turn corresponds to the element $(i,h,j)$ of the Rees matrix semigroup 
$\mathcal{M}^0[H; I, J; P]$ via the isomorphism $\Theta$ defined in \eqref{eq_RM_concrete}. 
As a consequence of this correspondence, and the definition of multiplication in $\mathcal{M}^0[H; I, J; P]$,
it follows that for all $i, k \in I$, $j, l \in J$ and $u, v \in (F \cup F^{-1})^*$ we have
\begin{align}
\rho(i,u,j)\rho(k,v,l) =
\rho(i, uf_{k j}^{-1}v, l)
\label{eq_rhoformula}
\end{align}
in $\ig{\E}$. 
Also, if $w_1, w_2 \in (F \cup F^{-1})^*$ both represent the same element $h$ of $H$ then both the words $\rho(i,w_1,j)$ and $\rho(i,w_2,j)$ represent the same element of $\ig{\E}$, namely the element \eqref{eq_ddagger}. 
The following result records the relationship between the mappings $\pi$ and $\rho$. 
\begin{lem}
\label{rho-computable}
The mappings $\pi$ and $\rho$ induce mutually inverse isomorphisms between the principal factor $\overline{D}_e$ and the Rees matrix semigroup 
$\mathcal{M}^0[H; I, J; P]$. Moreover, when $\E$ is finite both of these mappings are effectively computable. 
\end{lem}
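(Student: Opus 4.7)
The plan is to identify $\pi$ and $\rho$ (viewed as maps on $\ig{\E}$-classes and on Rees triples, respectively) with the explicit isomorphism $\Theta$ of \eqref{eq_RM_concrete} and its inverse, and then observe that all the data used to define them can be effectively produced from $\E$ when $\E$ is finite. The single intermediate fact to nail down is that under $\Theta$ each idempotent $e_{ij}$ corresponds to the triple $(i,\overline{f_{ij}},j)$, where $\overline{f_{ij}}$ is the element of $H$ represented by the word $f_{ij}$ of \eqref{eq_FDef}. This follows by sandwiching: $\overline{e_{i,j(i)}r_{j(i)}'}\cdot\overline{f_{ij}}\cdot\overline{er_j}$ simplifies, using that $r_{j(i)}'r_{j(i)}$ acts as the identity on $H_{1,j(i)}$ and $er_j\,\L\, e_{ij}$, to the unique idempotent $\overline{e_{ij}}\in H_{ij}$; comparing with the definition of $\Theta$ gives the claim.

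For $\pi$: let $w\equiv e_{i_1 j_1}e_{i_2 j_2}\cdots e_{i_m j_m}$ represent an element of $D_e$. Applying the observation letter by letter, $\Theta(\overline{w})$ equals the product
\[
(i_1,\overline{f_{i_1 j_1}},j_1)(i_2,\overline{f_{i_2 j_2}},j_2)\cdots(i_m,\overline{f_{i_m j_m}},j_m)
\]
in $\mathcal{M}^0[H;I,J;P]$. Using the multiplication rule and the identity $p_{jk}=\overline{f_{kj}}^{-1}$ from Theorem \ref{thm_presentation}, this product telescopes to $(i_1,\overline{\widetilde{\pi}(w)},j_m)$, which is exactly $\pi(w)$ as defined in \eqref{pidef}. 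Hence $\pi$ is constant on $\ig{\E}$-classes and descends to $\Theta$ on $\overline{D}_e$, which is an isomorphism.

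For $\rho$: the paragraph containing \eqref{eq_ddagger} already records that $\rho(i,u,j)$ represents in $\ig{\E}$ the element $\overline{e_{i,j(i)}r_{j(i)}'}\cdot h\cdot\overline{er_j}$, where $h$ is the element of $H$ represented by $u$; this is precisely $\Theta^{-1}(i,h,j)$. In particular $\rho(i,u,j)$ depends only on the triple $(i,h,j)$ and descends to $\Theta^{-1}$. The multiplicative compatibility \eqref{eq_rhoformula}, together with $p_{jk}=\overline{f_{kj}}^{-1}$, matches the Rees product $(i,h,j)(k,h',l)=(i,hp_{jk}h',l)$ whenever $(k,j)\in K$, and $\rho(i,u,j)\rho(k,v,l)$ represents $0$ otherwise because both factors then lie in different $\L$-classes with no idempotent linking them; this gives the cross-check that the descended maps are mutually inverse semigroup isomorphisms.

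For effective computability under finiteness of $\E$: the $\D$-class $D_e$ and the finite index sets $I,J$ together with the map $(i,j)\mapsto e_{ij}$ on $K$ are obtained by tabulating the $\R$- and $\L$-classes of the finitely many idempotents in $D_e$ via Lemma \ref{lem_zero_c}; a choice of $j(i)$ for each $i\in I$ is then a finite search. The Schreier words $r_j,r_j'$ are produced by the inductive procedure described just before Theorem \ref{thm:prescomp}, whose effectivity rests on Lemma \ref{lem_four_c}. With these tables in hand, $\widetilde{\pi}$ and $\widetilde{\rho}$ are finite letter-by-letter substitutions, and $\pi(w)$ and $\rho(i,u,j)$ are immediate concatenations. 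The only real piece of bookkeeping — rather than a genuine obstacle — is verifying that $p_{jk}=\overline{f_{kj}}^{-1}$ is exactly the insertion needed to make the telescoping in $\widetilde{\pi}$ agree with Rees multiplication; this is the content of the computation in Section \ref{sec_RM} preceding Theorem \ref{thm_presentation}.
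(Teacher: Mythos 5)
Your proof is correct and follows essentially the same route as the paper, which simply declares the first assertion "a straightforward consequence of the definitions and discussion above" and cites the same computability ingredients (Lemma~\ref{lem_zero_c}, Theorem~\ref{thm:testing_reg_c}, and the Schreier-system procedure preceding Theorem~\ref{thm:prescomp}); you have merely made explicit the identification with $\Theta$, the correspondence $e_{ij}\leftrightarrow(i,\overline{f_{ij}},j)$, and the telescoping that the paper leaves to the reader.
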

\begin{proof}
It is a straightforward consequence of the definitions and discussion above that
on the level of semigroups (as opposed to words) the mappings $\pi$ and $\rho$ induce the isomorphism $\Theta$ (defined in \eqref{eq_RM_concrete}) and its inverse respectively, between the principal factor $\overline{D}_e$ and the Rees matrix semigroup 
$\mathcal{M}^0[H; I, J; P]$. By Lemma~\ref{lem_zero_c} and Theorem~\ref{thm:testing_reg_c} we can decide if a word represents an element of $D$, and the indices $i,j$ of the idempotents $e_{i j}$ can be computed, and hence the mapping $\pi$ can be computed using formula \eqref{pidef}. The mapping $\rho$ is computable using formula \eqref{rhodef} since in the discussion preceding the statement of Theorem~\ref{thm:prescomp} we saw that there is an algorithm which computes a Schreier system $r_j, r_j' \; (j \in J)$ from the finite biorder $\E$.  
%
%
%
%
\end{proof}

For the remainder of the article, extensive use will be made of the mapping $\rho$. We highlight here the two key properties of $\rho$ that will be used throughout: 
\begin{enumerate}
\item[(i)] The word $\rho(i,w,j) \in (E \cap D)^*$ represents the element of $\ig{\E}$ that corresponds (i.e. maps onto under the isomorphism $\Theta$ given in \eqref{eq_RM_concrete}) to the triple $(i,w,j)$ of the Rees matrix semigroup $\mathcal{M}^0[H; I, J; P]$ (with $w$ interpreted as an element of $H$).
\item[(ii)] When $\E$ is finite the mapping $\rho$ is effectively computable.
\end{enumerate}
In the arguments in Section~\ref{sec:undec} below, $\rho(i,w,j)$ will sometimes be used simply as convenient notation for a word over $E$ representing the triple $(i,w,j)$, while at other points in the argument it will be of crucial importance that the mapping $\rho$ is effectively computable.

This completes our discussion of 
the regular part of $\ig{\E}$ and decidability properties pertaining to it. 
In the rest of the paper we turn our attention to the non-regular part of $\ig{\E}$. 

%
%
%
%
%
%
%
%
%
%
%
%
\section{Undecidability of the word problem in general}\label{sec_undec1}
\label{secWPgen}

The word problem for a free idempotent generated semigroup $\ig{\E}$ of a finite biordered set $\E$ is undecidable in general. However, until now, the only known examples contained maximal subgroups with undecidable word problem. This led naturally to the question of whether this was the only barrier to undecidability of the word problem in $\ig{\E}$, 
specifically: 

\

\noindent \textbf{Question:} If $\E$ is a finite biordered set, and every maximal subgroup of $\ig{\E}$ has decidable word problem, does it follow that $\ig{\E}$ has decidable word problem?   

\

\noindent The results above show that under these assumptions many properties are decidable in $\ig{\E}$, in particular regularity, and the word problem for regular words. The rest of the article will be devoted to 
showing that the answer to the above question is no. In this section we outline our general approach to the problem, and then 
in subsequent sections 
we give full details of the construction, and proofs of the results needed to establish undecidability. 

The key idea of the construction is to relate the word problem in $\ig{\E}$ to the membership problem for finitely generated subgroups of finitely presented groups. The construction takes a finitely presented group $G=\pre{A}{\mathcal{R}}$ and a finitely generated subgroup $H$ of $G$, where we suppose that: 
\begin{itemize}
\item[(G1)] every relation from $\mathcal{R}$ has the form $ab=c$ for some $a,b,c\in A$;
\item[(G2)] $H$ is specified by the set of generators $B$ such that $B\subseteq A$. Furthermore, we assume that $B^{-1}=B$, so that every element of $H$ can be expressed as a (monoid) word over $B$.
\end{itemize}
%
%
To see that given any finitely presented group $G$, and finitely generated subgroup $H$ of $G$, a finite presentation $\pre{A}{\mathcal{R}}$ for 
$G$ satisfying (G1), (G2) exists, one can proceed as follows: 
\begin{enumerate}
\item Pick a finite generating set $B_1$ of $H$.
\item Extend $B_1$ to a finite generating set $A_1$ of $G$.
\item Close $A_1$ under inversion, and add a letter $e$ representing the identity element to it, to obtain $A_2=A_1\cup A_1^{-1}\cup\{e\}$.
\item Write down a finite presentation for $G$ over generators $A_1$, in which every relation has the form $u=e$ where $u$ is a positive
      word, and which contains the relations $ae=a$, $ea=a$   ($a\in A_1$) and $ab=e$ (for any pair of mutually inverse generators 
			$a,b\in A_1$).
\item Expand $A_1$ to a new generating set $A$, by adding to it symbols representing all non-empty prefixes of all words $u$ featuring in 
      the relations $u=e$.
\item Write down all the relations of the form $ab=c$ over this new generating set that hold in $G$, and check that this presentation has 
      the desired properties.
\end{enumerate}

From this data we shall construct a finite band which we denote by $B_{G,H}$,
and the corresponding biorder by $\B_{G,H}$.
The band $B_{G,H}$ will have five $\D$-classes whose dimensions are determined by the sizes of the finite sets $A$, $B$ and $\mathcal{R}$, the minimal $\D$-class is a zero element, and the $\J$-order on these $\D$-classes is illustrated in Figure~\ref{fig_BGH}. 
Full details of the construction of the band $B_{G,H}$ will be given in Section~\ref{sec_BGH} below. 

Let $\ig{\B_{G,H}}$ denote the free idempotent generated semigroup arising from the band $B_{G,H}$. After giving details of the construction, and making some observations about the structure of $B_{G,H}$, we shall then go on to prove the following results about $\ig{\B_{G,H}}$. 

\begin{thm}\label{thm_SubG}
Every non-trivial maximal subgroup of $\ig{\B_{G,H}}$ is isomorphic to the group $G$. 
\end{thm}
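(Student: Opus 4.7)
The approach is to apply the presentation of Theorem~\ref{thm_presentation} to each of the four non-zero $\D$-classes of $\B_{G,H}$ in turn, and identify the maximal subgroup that arises. Since $\B_{G,H}$ is a band, every element of any $\D$-class $D$ is an idempotent, so the group-$\H$-class index set is $K = I \times J$ and every $\H$-class of $D$ contributes a generator $f_{ij}$ to the presentation \eqref{rels51}--\eqref{rels53}. The zero $\D$-class trivially yields the trivial group, so only the four upper $\D$-classes require analysis.

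First I would dispose of the $\D$-classes whose maximal subgroups are intended to be trivial. For any $\D$-class which happens to be a single $\L$-class or a single $\R$-class, triviality is automatic: each $\H$-class in a band is a singleton. For an ``auxiliary'' $\D$-class with a genuinely rectangular structure, I would show that every non-degenerate square is singular; that is, for all $i \neq k$ and $j \neq l$ with $(i,j),(i,l),(k,j),(k,l) \in K$ there exists an idempotent $f$ in $\B_{G,H}$ witnessing one of \eqref{liseq2} or \eqref{liseq3}. Combined with the normalisation $f_{i,j(i)}=1$ from \eqref{rels52}, the singular-square relations \eqref{rels53} then force each $f_{ij}$ to equal the identity, collapsing the presentation to the trivial group.

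The central argument concerns the ``main'' $\D$-class of $\B_{G,H}$, which is designed to encode $G$. Here the strategy is to use the row-wise identifications \eqref{rels51} and the normalisation \eqref{rels52} to reduce the generating set $\{f_{ij}\}$ to a distinguished collection that is naturally indexed by the alphabet $A$, and then to show that the singular-square relations \eqref{rels53}, once expressed in these surviving generators, coincide exactly with the defining relations $ab=c$ of $\mathcal{R}$. The shape of \eqref{rels53}, which equates a length-two word in the $f_{ij}$ to another length-two word, fits precisely the hypothesis that every relator of $G$ has the form $ab=c$. A Tietze-transformation argument then identifies $H_e$ with the group $\langle A \mid \mathcal{R}\rangle = G$.

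The chief obstacle I anticipate is the combinatorial bookkeeping for the main $\D$-class. One must show (i) that every singular square in this $\D$-class arises from a relation in $\mathcal{R}$, so that no spurious relations are introduced and $G$ surjects onto $H_e$, and (ii) that every relation in $\mathcal{R}$ does in fact correspond to a singular square, so that $H_e$ surjects onto $G$. Both directions depend on a careful analysis of precisely which idempotents in the neighbouring $\D$-classes are capable of singularising a given square in the central $\D$-class, and hence on the fine detail of the construction in Section~\ref{sec_BGH}.
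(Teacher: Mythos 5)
Your plan is essentially the paper's proof: the paper also derives everything from Theorem~\ref{thm_presentation}, kills $\overline{L}_G$ via the normalisation \eqref{rels52}, kills $\overline{K}_H$ by showing every square is singularised (by the idempotents $e_{\overline{a}}\in L_G$), and identifies the subgroups in $\overline{K}_G'$ and $\overline{K}_G''$ with $G$. The one organisational difference is that for the main $\D$-class the paper does not redo the Tietze-transformation analysis you outline: it first observes that the idempotents of $K_H$ act on $K_G'$ by constant maps and hence contribute no non-trivial singular squares, so that the presentation for $\overline{K}_G'$ coincides with the one arising from the band $L_G\cup K_G'$, and then quotes \cite{DR}, where that band is constructed and its maximal subgroup shown to be $G$. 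Your inline version of that argument is exactly the \cite{DR} computation the paper sketches after Proposition~\ref{maxsubgps}, so this is a presentational rather than mathematical divergence; do note, though, that the identifications collapsing the $f_{ij}$ to generators indexed by $A$ come from singular squares (those induced by $e_a$ and $e_{\overline a}$), i.e.\ from \eqref{rels53}, not only from \eqref{rels51}--\eqref{rels52} as you suggest.

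One local justification in your plan is invalid as stated: for the $\D$-class $\overline{L}_G$ you claim triviality is ``automatic'' because $\H$-classes in a band are singletons. That reasoning proves too much --- \emph{every} $\H$-class of $B_{G,H}$ is a singleton, including those of $K_G'$, whose corresponding maximal subgroup in $\ig{\B_{G,H}}$ is $G$. The correct reason is that $L_G$ is a left zero band, so its $\D$-class has a single $\L$-class, $|J|=1$, every generator is of the form $f_{i,j(i)}$, and all of them are set equal to $1$ by \eqref{rels52}. This is the argument the paper gives, and it is an easy repair, but the singleton-$\H$-class observation on its own carries no force here.
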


\begin{thm}\label{thm_Memb}
If $\ig{\B_{G,H}}$ has decidable world problem then the membership problem for $H$ in $G$ is decidable. 
\end{thm}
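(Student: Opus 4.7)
The plan is a many-one reduction of the subgroup membership problem for $H$ in $G$ to the word problem in $\ig{\B_{G,H}}$: for each $w \in A^{\ast}$ representing $g = \overline{w} \in G$, I aim to compute a pair of words $u_w, v_w \in B_{G,H}^{\ast}$ with
\[
u_w = v_w \text{ in } \ig{\B_{G,H}} \iff g \in H.
\]
Composing this computable reduction with the hypothesised word-problem algorithm for $\ig{\B_{G,H}}$ then decides membership of $g$ in $H$.

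To set up the encoding, I would use Theorem~\ref{thm_SubG} to fix a distinguished non-trivial maximal subgroup $H_{e_0}$ of $\ig{\B_{G,H}}$ together with an isomorphism $\theta \colon H_{e_0} \to G$, and describe a computable letter-by-letter translation $w \mapsto \hat w \in B_{G,H}^{\ast}$ such that $\hat w$ represents an element of $H_{e_0}$ with $\theta(\overline{\hat w}) = g$. The existence of such an encoding should follow from the band construction in Section~\ref{sec_BGH}, which supplies idempotents of $B_{G,H}$ indexed by $A$. I would then choose fixed idempotent products $\alpha, \beta, \gamma \in B_{G,H}^{\ast}$, built from the idempotents of $B_{G,H}$ corresponding to the generating set $B$ of $H$, and set $u_w \equiv \alpha\, \hat w\, \beta$ and $v_w \equiv \gamma$, arranged so that the two sides collide in $\ig{\B_{G,H}}$ exactly when $g$ admits a $B$-expression, i.e.\ exactly when $g \in H$.

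The forward implication ($g \in H \Rightarrow u_w = v_w$) should be a direct rewriting argument: from a $B$-word for $g$, the corresponding defining relations transform $u_w$ into $v_w$ step by step. The main obstacle is the converse. For it I would combine the Rees matrix description of regular $\D$-classes from Section~\ref{sec_RM} (Theorem~\ref{thm_presentation} together with Lemma~\ref{rho-computable}), which reduces equality of regular words to equality of triples in an explicit Rees matrix semigroup over $G$, with the non-regular-rewriting rigidity from Section~\ref{sec_WPReg} (in particular Lemma~\ref{lem_RegWords2} and Theorem~\ref{thm:testing_reg}) and the biorder-action results from Section~\ref{sec_HowieLallement}. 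These tools together should constrain which elements of the $G$-subgroups are reachable from $u_w$ by permissible rewritings, forcing any equality $u_w = v_w$ to be witnessed by a $B$-expression for $g$.

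The hardest step, and the one I expect to dominate the proof, is ruling out spurious equalities arising from detours through the non-regular part of $\ig{\B_{G,H}}$ or through the $\D$-classes of $\B_{G,H}$ not involved in the encoding. This rigidity is what the careful design of the five-$\D$-class band pictured in Figure~\ref{fig_BGH} is engineered to provide, and unlocking it will require a case analysis of which Green's classes each partial product can inhabit, controlled by the biorder.
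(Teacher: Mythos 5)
Your overall strategy --- a computable many--one reduction sending $w$ to a pair of words whose equality in $\ig{\B_{G,H}}$ is equivalent to $\overline{w}\in H$, with the forward direction done by explicit rewriting and the converse by a rigidity analysis --- is exactly the shape of the paper's argument (which computes $\rho'(1,w^{-1},1)$ and $\rho''(1,w,1)$ via Lemma~\ref{rho-computable} and invokes Proposition~\ref{prop_bigone}). However, the concrete encoding you propose has a genuine gap. You put all the dependence on $w$ into a single regular subword $\hat w$ representing $\theta^{-1}(g)$ in one fixed maximal subgroup, sandwiched between \emph{fixed} words $\alpha,\beta$, and compare against a fixed $\gamma$. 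The subgroup $H$ is invisible to the regular structure of $\ig{\B_{G,H}}$: every maximal subgroup is trivial or isomorphic to $G$ and the Rees structure matrix does not involve $H$, so if $\alpha\hat w\beta$ and $\gamma$ are regular words their equality is decided by Theorem~\ref{thm:reg} using only the word problem of $G$, and hence cannot encode an undecidable membership problem. If instead $\alpha$ contains a $K_G'$-letter and $\beta$ a $K_G''$-letter (so the product is non-regular), a one-sided dependence still fails: by the analysis culminating in Lemma~\ref{lem-w1w2}, an equality of the form $e_{11}'\,\rho''(1,w,1)=e_{11}'e_{11}''$ holds if and only if $w=1$ in $G$, which again tests the word problem of $G$, not membership in $H$. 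The paper's test words must carry $w$ in \emph{both} factors, $\rho'(1,w^{-1},1)\rho''(1,w,1)$, compared against the non-regular element $e_{11}'e_{11}''$ whose $\H$-class has Sch\"utzenberger group $H$.

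Relatedly, the toolkit you cite for the converse (Theorem~\ref{thm_presentation}, Lemma~\ref{rho-computable}, Lemma~\ref{lem_RegWords2}, Theorem~\ref{thm:testing_reg}) governs only regular elements and does not control equalities in the non-regular part where the decisive equality lives. The paper has to build dedicated machinery: the relations $\approx$ and $\sim$ on pairs in $\mathcal{L}(\overline{K}_G^\prime)\times\mathcal{L}(\overline{K}_G^{\prime\prime})$, the equivalence $u_1v_1=u_2v_2 \Leftrightarrow (u_1,v_1)\sim(u_2,v_2)$ (Lemma~\ref{lem_SimRelation}), and above all the transfer computation (Lemmas~\ref{lemeactions} and \ref{lem_910a}) showing that sliding an idempotent of $L_G\cup K_H$ across the $K_G'/K_G''$ boundary multiplies the group coordinate of the right factor by an element of $B_1\subseteq H$ --- this is precisely where $H$ enters, via the deliberately restricted column set $B_1$ of the $\D$-class $K_H$. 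You correctly identify that ruling out spurious derivations is the hard part, but without this mechanism (and without two-sided dependence on $w$) the reduction as described would not establish the equivalence you need.
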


Theorem \ref{thm_SubG} is proved in Proposition \ref{maxsubgps}, and Theorem 
\ref{thm_Memb} is proved at the very end of Section \ref{sec:undec}.
Combining these results leads us to our main theorem for $\ig{\B_{G,H}}$.

\begin{thm}\label{thm_BGHMain}
Let $G$ be a finitely presented group with decidable word problem and let $H$ be a finitely generated subgroup of $G$ with undecidable membership problem. Then the free idempotent generated semigroup $\ig{\B_{G,H}}$ over the finite band $B_{G,H}$ has the following properties:  
\begin{itemize}
\item[(i)] every non-trivial maximal subgroup of $\ig{\B_{G,H}}$ is isomorphic to $G$, and so every maximal subgroup of $\ig{\B_{G,H}}$ has decidable word problem, while
\item[(ii)] the semigroup $\ig{\B_{G,H}}$ has undecidable word problem.  
\end{itemize}
\end{thm}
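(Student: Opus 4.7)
The plan is to deduce this theorem essentially for free from the two auxiliary results stated immediately above, Theorems~\ref{thm_SubG} and~\ref{thm_Memb}, whose proofs constitute the bulk of the remaining work of the paper. The combinatorial and algorithmic substance is entirely concentrated in those two statements; once they are in hand, the present theorem is just a packaging argument.

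For part (i), apply Theorem~\ref{thm_SubG} to conclude that every non-trivial maximal subgroup of $\ig{\B_{G,H}}$ is isomorphic to $G$. Since $G$ is assumed to have decidable word problem, so does every non-trivial maximal subgroup; the trivial maximal subgroups obviously have decidable word problem. This covers (i) immediately.

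For part (ii), the argument is by contraposition. Suppose, for a contradiction, that $\ig{\B_{G,H}}$ had decidable word problem. Then Theorem~\ref{thm_Memb} would yield a decision procedure for the membership problem for $H$ in $G$, contradicting the standing hypothesis that $H$ was chosen with undecidable membership problem. Hence the word problem for $\ig{\B_{G,H}}$ must be undecidable.

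Finally, to confirm that the hypotheses of the theorem are not vacuous, one may invoke the classical example of Mihailova: taking $G = F_2 \times F_2$ (which has decidable word problem, as a direct product of free groups) and $H$ to be a suitable finitely generated Mihailova subgroup, one obtains a pair $(G,H)$ satisfying both hypotheses, and moreover admitting a presentation of the form required in Section~\ref{sec_undec1} (relations $ab=c$, a symmetric generating set $B \subseteq A$ for $H$). The real substance of the work thus lies in establishing Theorems~\ref{thm_SubG} and~\ref{thm_Memb}: the former requires a detailed analysis of which idempotent products become equal in $\ig{\B_{G,H}}$, so that the relations from $\mathcal{R}$ are faithfully encoded in each non-trivial maximal subgroup (using the presentation machinery of Theorem~\ref{thm_presentation} together with an identification of the singular squares of $B_{G,H}$); the latter demands that membership in $H$ can be read off from an equality of suitably chosen words in $\ig{\B_{G,H}}$, and this reduction will rely essentially on the Rees matrix machinery and the computable mapping $\rho$ from Lemma~\ref{rho-computable}. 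The latter is expected to be the main obstacle, since it requires simultaneously that the encoding is rich enough to \emph{detect} membership in $H$ and that no unintended collapse occurs in the non-regular part of $\ig{\B_{G,H}}$ that would corrupt this detection.
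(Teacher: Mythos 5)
Your proposal is correct and follows exactly the paper's own route: the authors likewise derive Theorem~\ref{thm_BGHMain} immediately by combining Theorem~\ref{thm_SubG} (for part (i)) with the contrapositive of Theorem~\ref{thm_Memb} (for part (ii)), deferring all the substance to the proofs of those two results. Your closing remarks correctly identify where the real work lies and match the paper's subsequent use of Mihailova's construction to show the hypotheses are satisfiable.
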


It is known from combinatorial group theory that pairs of groups $H \leq G$, satisfying the hypotheses of Theorem~\ref{thm_BGHMain} do exist. 
The first such example, based on fibre products, was exhibited by Mihailova \cite{Mihailova}, and proceeds as follows.
Let $\Gamma$ be a group and $\theta : \Gamma\rightarrow \Delta$ a group homomorphism.
The associated \emph{fibre product} is
\[
\Pi_{\Gamma,\theta} =\{ (g_1,g_2)\in \Gamma\times\Gamma : g_1\theta=g_2\theta\}\leq \Gamma\times \Gamma.
\]
If $\Gamma$ is generated by a set $A$ and if $\ker\theta$ is generated as a normal subgroup
by a set $R$, then it can be shown that $\Pi_{\Gamma,\theta}$ is generated by 
$\{ (a,a): a\in A\}\cup \{ (r,1): r\in R\}$; see \cite[Lemma 4.1]{Miller}.
If we choose $\Delta=\langle A|R\rangle$ to be a finitely presented group with undecidable word problem, and $\Gamma$ to be the free group on $A$, it follows that
$\Pi_{\Gamma,\theta}$ is a finitely generated subgroup of the finitely presented group $\Gamma\times\Gamma$ with undecidable membership problem. To summarise:

\begin{thm}[Mihailova (1958)]
Let $\Gamma$ be a finitely generated free group of rank at least $2$. Then $G = \Gamma \times \Gamma$ is a group with decidable word problem, and $G$ has a finitely generated subgroup $H$ such that the membership problem for $H$ in $G$ is undecidable. 
\end{thm}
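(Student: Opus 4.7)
The plan is to realise $G$ as a direct product $\Gamma \times \Gamma$ of finitely generated free groups of rank at least $2$, and to construct $H$ as a Mihailova-style fibre product associated with a quotient of $\Gamma$ onto a finitely presented group with undecidable word problem. By the Novikov--Boone theorem, there exists a finitely presented group $\Delta = \langle A \mid R\rangle$ (with $|A| \geq 2$ and $R$ finite) whose word problem is undecidable. Take $\Gamma$ to be the free group on $A$ and $\theta: \Gamma \to \Delta$ the canonical quotient homomorphism; set $G = \Gamma \times \Gamma$ and $H = \Pi_{\Gamma,\theta}$.

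First I would verify the positive properties of $G$ and $H$. The group $G = \Gamma \times \Gamma$ has decidable word problem because each factor $\Gamma$ is free (and hence has word problem solvable by free reduction), and decidability of the word problem is trivially preserved by finite direct products (decide each coordinate independently). Finite generation of $H$ is handled by the lemma of Miller \cite[Lemma 4.1]{Miller} already cited in the excerpt: since $A$ generates $\Gamma$ and $R$ generates $\ker\theta$ as a normal subgroup, $H$ is generated by the finite set $\{(a,a) : a \in A\} \cup \{(r,1_\Gamma) : r \in R\}$.

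The main step is the reduction of the word problem of $\Delta$ to the membership problem of $H$ in $G$. The key observation is the identification
\[
(w, 1_\Gamma) \in H \iff w\theta = 1_\Gamma\theta = 1_\Delta,
\]
valid for any word $w \in A^*$ viewed as an element of $\Gamma$. Indeed, by definition of the fibre product, $(w, 1_\Gamma) \in \Pi_{\Gamma,\theta}$ exactly when the two coordinates have the same image under $\theta$, which is to say $w\theta = 1_\Delta$. Consequently, any algorithm deciding membership in $H$ would, when applied to inputs of the form $(w, 1_\Gamma)$, decide whether an arbitrary word $w$ over $A$ represents the identity in $\Delta$, yielding a solution to the word problem of $\Delta$. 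Since the word problem of $\Delta$ was chosen to be undecidable, the membership problem of $H$ in $G$ must be undecidable as well, which finishes the proof. There is no real obstacle here beyond invoking Miller's lemma and the existence of finitely presented groups with unsolvable word problem; the whole argument is a short reduction built on the fibre-product trick.
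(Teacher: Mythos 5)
Your proposal is correct and follows essentially the same route as the paper: the Mihailova fibre product $\Pi_{\Gamma,\theta}$ for a quotient $\theta$ of a free group onto a finitely presented group $\Delta$ with undecidable word problem, with Miller's Lemma 4.1 supplying finite generation. You additionally spell out the reduction $(w,1_\Gamma)\in H \Leftrightarrow w\theta = 1_\Delta$, which the paper leaves implicit; this is a welcome clarification rather than a deviation.
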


Combining this with Theorem~\ref{thm_BGHMain} completes the proof of our main result. 

\begin{thm} \label{thm:undec}
There exists a finite band $S$ whose biordered set $\E=\E(S)$ has the following properties:
\begin{itemize}
\item[(i)] All maximal subgroups of $\ig{\E}$ have decidable word problem.
\item[(ii)] The word problem for $\ig{\E}$ is undecidable.
\end{itemize}
\end{thm}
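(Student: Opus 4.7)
The plan is to obtain Theorem \ref{thm:undec} as an almost immediate corollary of Theorem \ref{thm_BGHMain} combined with Mihailova's theorem, which together hand us exactly the pair of groups $H\leq G$ needed to drive the construction.

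First I would invoke Mihailova's theorem to produce a finitely generated free group $\Gamma$ of rank $2$ and a finitely presented group $\Delta$ with undecidable word problem, and from these build $G=\Gamma\times\Gamma$ together with its fibre-product subgroup $H=\Pi_{\Gamma,\theta}$. This gives $G$ finitely presented with decidable word problem (since $F_2$, and hence $F_2\times F_2$, has decidable word problem), and $H$ a finitely generated subgroup of $G$ with undecidable membership problem.

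Before feeding this pair into the construction from Section~\ref{sec_BGH}, I would need to verify the two technical hypotheses imposed on the presentation of $G$: every relation must have the form $ab=c$ with $a,b,c\in A$, and the given generating set $B$ of $H$ must be contained in $A$ and satisfy $B^{-1}=B$. As remarked just after the statement of those conditions, this is always achievable for any finitely presented group and finitely generated subgroup: one enlarges the generating set of $G$ by introducing a fresh generator for every non-generator letter appearing on either side of an existing relation (and for inverses of generators), rewriting each relator as a finite sequence of triangular relations of the required shape, and then simultaneously extending the generating set $B$ of $H$ by adjoining $B^{-1}$. The resulting enlarged presentation of $G$ still has decidable word problem and the enlarged $B$ still generates $H$, so the membership problem for $H$ in $G$ remains undecidable.

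With these hypotheses in place, I would then apply the construction of Section~\ref{sec_BGH} to build the finite band $B_{G,H}$ and its biorder $\B_{G,H}$, and invoke Theorem~\ref{thm_BGHMain} directly. Part~(i) of Theorem~\ref{thm_BGHMain} tells us that every non-trivial maximal subgroup of $\ig{\B_{G,H}}$ is isomorphic to $G$; since $G$ has decidable word problem, so does every maximal subgroup of $\ig{\B_{G,H}}$ (the trivial ones trivially). Part~(ii) of Theorem~\ref{thm_BGHMain} gives that $\ig{\B_{G,H}}$ itself has undecidable word problem. Setting $S=B_{G,H}$ and $\E=\E(S)=\B_{G,H}$ yields the required finite band.

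The only non-routine aspect here is the preliminary step of massaging the Mihailova pair into the triangular presentation format required by the construction, but this is a standard Tietze-transformation argument and does not disturb decidability of the word problem of $G$ or undecidability of the membership problem for $H$. All the real work has already been done in establishing Theorem~\ref{thm_BGHMain} (via Theorems~\ref{thm_SubG} and~\ref{thm_Memb}) and in Mihailova's classical construction, so the present theorem follows by their direct combination.
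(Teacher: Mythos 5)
Your proposal is correct and follows exactly the paper's own route: the paper derives Theorem~\ref{thm:undec} by combining Mihailova's theorem with Theorem~\ref{thm_BGHMain}, just as you do. Your extra care in verifying that the Mihailova pair can be massaged into a triangular presentation with $B\subseteq A=A^{-1}$ and $B^{-1}=B$ is a point the paper merely asserts as ``not hard to see'', and your sketch of that step is the standard and correct argument.
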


\section{The $B_{G,H}$ construction}\label{sec_BGH}
\noindent We begin by describing a general construction. 
Consider any triple $S$, $V$, $U$ where 
\begin{itemize}
\item $S$ is a semigroup, 
\item $U$ is an ideal of $S$, and
\item $V$ is a subsemigroup of $S$.  
\end{itemize}
Let $S^{(1)}$, $S^{(2)}$ be copies of $S$, all three pairwise disjoint, under isomorphisms $s \mapsto s^{(i)}$ ($i=1,2$). Let
\[
W(S) = 
S \cup S^{(1)} \cup S^{(2)} \cup \{0\}
\]
be the semigroup with $0$ extending the multiplication on $S$, $S^{(1)}$, $S^{(2)}$ via:
\[
st^{(i)} = s^{(i)}t = (st)^{(i)}, \quad
s^{(1)} t^{(2)} = s^{(2)} t^{(1)} = 0, 
\]
for $s, t \in S$ and $i=1,2$. That this is indeed a semigroup can be easily checked directly, or by noting that this is a special instance of the Clifford construction \cite[Section~4.2]{HoBook}, with the ingredients arranged in a diamond semilattice as in Figure~\ref{fig_pictorial}. Let 
\[
T(S, V, U) = V \cup U^{(1)} \cup U^{(2)} \cup \{ 0 \}. 
\]
It is easy to verify that $T(S,V,U)$ is a subsemigroup of $W(S)$. Its gross structure, and the way that it embeds into $W(S)$, are
also illustrated in Figure~\ref{fig_pictorial}. 

%
%

\begin{figure}[t]
\begin{center}
\scalebox{1}
{
\begin{tikzpicture}
[scale=1, 
Srectangle/.style ={draw, rectangle, thick, rounded corners, minimum height=8em, minimum width=6em},
Vrectangle/.style ={draw, rectangle, thick, rounded corners, minimum height=6em, minimum width=3.8em},
Irectangle/.style ={draw, rectangle, thick, rounded corners, minimum height=2em, minimum width=6em},
FSrectangle/.style ={rectangle, draw=black, thick, fill=gray!20, rounded corners, minimum height=8em, minimum width=6em},
FVrectangle/.style ={rectangle, draw=black, thick, fill=gray!20, rounded corners, minimum height=6em, minimum width=3.8em},
FIrectangle/.style ={rectangle, draw=black, thick, fill=gray!20, rounded corners, minimum height=2em, minimum width=6em}],
\tikzstyle{vertex}=[circle,draw=black, fill=black, inner sep = 0.3mm]
%
%
\node (S) at (0em,0em) [Srectangle] {};
\node at (-1.1em,-1em) [FVrectangle] {};
\node at (0em,-3em) [Irectangle] {};
\node at (4em,4em) {$S$};
\node at (-2em,1em) {$V$};
\node at (2em,-3em) {$U$};
%
%
\node (S'') at (8em,-9em) [Srectangle] {};
\node at (8em,-12em) [FIrectangle] {};
\node at (6.9em,-10em) [Vrectangle] {};
\node at (12.5em,-5em) {$S^{(2)}$};
\node at (6.1em,-8em) {$V^{(2)}$};
\node at (10em,-12em) {$U^{(2)}$};
%
%
\node (S') at (-8em,-9em) [Srectangle] {};
\node at (-8em,-12em) [FIrectangle] {};
\node at (-9.1em,-10em) [Vrectangle] {};
\node at (-12em,-5em) {$S^{(1)}$};
\node at (-9.8em,-8em) {$V^{(1)}$};
\node at (-6em,-12em) {$U^{(1)}$};
%
%
\node (Z) at (0,-0.75-6) [circle,draw=white!20,fill=gray!20] {$\{ 0 \}$};
%
%
\path[-stealth] (-8em,-14em) edge node [below] {$0$} (Z);
\path[-stealth] (8em,-14em) edge node [below] {$0$} (Z);
\path[-stealth] (-4em,0em) edge node [above] {$\mathrm{id}$ \ \ } (-8em,-4em);
\path[-stealth] (4em,0em) edge node [above] { \ \ $\mathrm{id}$} (8em,-4em);
\end{tikzpicture}
}
\end{center}
\caption{
A pictorial representation of the structure of the semigroup $W(S)$. The subsemigroup $T(S, V, U) = V \cup U^{(1)} \cup U^{(2)} \cup \{ 0 \}$ of $W(S)$ is shaded in grey. 
}\label{fig_pictorial}
\end{figure}
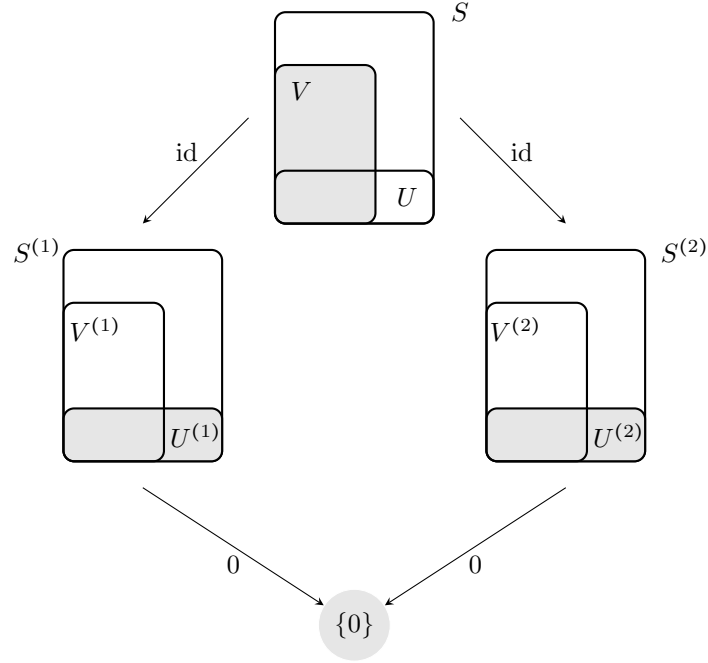

Let $G=\pre{A}{\mathcal{R}}$ be a finitely presented group and let $H$ be a finitely generated subgroup of $G$. 
Again, as in Section \ref{secWPgen}, we assume without loss of generality that the presentation 
$\pre{A}{\mathcal{R}}$ is finite and satisfies Conditions (G1), (G2).
We are now going to describe a finite band $B_{G,H}$. 
It will be obtained using the $T(S,V,U)$ construction above where $S=B_G$ is taken to be the band constructed in \cite{DR}, $U$ will be the unique minimal ideal of $B_G$, and $V$ will be a certain subsemigroup of $B_G$ which will depend on the choice of the subgroup $H$. The details are as follows. 


Define $A_1=A\cup\{1\}$ and $B_1=B\cup\{1\}$; furthermore, let $\ol{A}_1=\{\ol{a}:\ a\in A_1\}$ be a copy of $A_1$, and let
$\infty$ be a symbol not in $A_1$. Set
$$I=A_1\cup\ol{A}_1,\quad J=A_1\cup\{\infty\}.$$
Now consider the direct product $\T=\T_I^{(l)}\times \T_J^{(r)},$ where $\T_I^{(l)}$ (respectively $\T_J^{(r)}$) is the
semigroup of all mappings $I\rightarrow I$ (resp.\ $J\rightarrow J$) written on the left (resp. right). The semigroup
$\T$ has a unique minimal ideal $K_G$ consisting of all $(\sigma,\tau)$ with both $\sigma$ and $\tau$ constant. This
ideal is naturally isomorphic to the rectangular band $I\times J$, and we identify the two. Following \cite{DR} we have
$B_G=K_G\cup L_G\subseteq \T_I^{(l)}\times \T_J^{(r)}$, where $L_G$ is a left zero band which we now describe.

For each $(\sigma,\tau)\in L_G$ we shall have
$$
\sigma^2=\sigma,\ \ \tau^2=\tau,\ \ \ker(\sigma)=\{A_1,\ol{A}_1\},\ \ \im(\tau)=A_1.
$$
Therefore, each $(\sigma,\tau)$ will be uniquely determined by $\im(\sigma)$ which must be a two-element set that is a
cross-section of $\{A_1,\ol{A}_1\}$, and the value $(\infty)\tau\in A_1$. 
These idempotents are defined in Table~\ref{table_table}. Note that $e_1=e_{\ol{1}}$.
\begin{table}[b]
\begin{center}
\begin{tabular}{|c|c|c|c|}
\hline
 \rule[-4pt]{0pt}{14pt} \textbf{Notation} &  \textbf{Indexing} & {\boldmath $\im(\sigma)$} & {\boldmath $(\infty)\tau$}
\\
\hline \rule[-5pt]{0pt}{15pt} $e_a=(\sigma_a\,,\tau_a)$ & $a\in A_1$ & $\{1,\ol{a}\}$ & $a$
\\
\hline \rule[-5pt]{0pt}{15pt} $e_{\ol{a}}=(\sigma_{\ol{a}}\,,\tau_{\ol{a}})$ & $\ol{a}\in \ol{A}_1$ & $\{a,\ol{a}\}$ &
$1$
\\
\hline \rule[-5pt]{0pt}{15pt} $e_{\rrel}=(\sigma_{\rrel}\,,\tau_{\rrel})$ & $\rrel=(ab,c)\in \mathcal{R} $ & $\{b,\ol{c}\}$ & $a$
\\
\hline
\end{tabular}

\bigskip

\end{center}
\caption{The elements of the $\D$-class $L_G$ of the band $B_{G,H}$.}\label{table_table}
\end{table}
From the previous definition it is easy to see that for any $e=(\sigma,\tau)\in L_G$ we have that $\sigma|_{A_1}$ and
$\tau|_{B_1}$ are transformations of $A_1$ and $B_1$, respectively (more precisely, a constant map and the identity).
Therefore, if we take $K_H=A_1\times B_1$ as a rectangular subband of $K_G$, $\ol{B}_{G,H}=K_H\cup L_G$ is a subband of
$B_G$. So we have
\[
K_H \cup L_G \leq K_G \cup L_G = B_G. 
\]
Also note that $K_G$ is an ideal of $K_G \cup L_G = B_G$. Therefore all of the conditions are satisfied to apply our general construction above and we define the semigroup
\[
B_{G,H}=
T(B_G, K_H \cup L_G, K_G)=
T(K_G \cup L_G, K_H \cup L_G, K_G)
\]
We now make some observations about the semigroup $B_{G,H}$. 
\begin{itemize}
\item $B_{G,H}$ is a finite band with five $\D$-classes $L_G$, $K_H$, $K_G^{(1)}$, $K_G^{(1)}$, and $\{0\}$ with their $\J$-order illustrated in Figure~\ref{fig_BGH}. Compare with Figure~\ref{fig_pictorial}, noting that $K_H$ corresponds to the intersection $V \cap U$. 
\item We have isomorphisms 
\[
L_G \cup K_G^{(1)} \cong
L_G \cup K_G^{(2)} \cong
L_G \cup K_G = B_G,
\]
via the obvious bijections, $x \mapsto x$ for $x \in L_G$ and $x \mapsto x^{(i)}$ for $x \in K_G$. 
\item The elements of $L_G$ are 
\[
e_a \ (a \in A_1), \quad
e_{\overline{a}} \ (\overline{a} \in \overline{A}_1), \quad
e_r \ (r \in \mathcal{R}),
\]
which are defined in Table~\ref{table_table}. 
\item The elements of $K_H$ are pairs $(a,b)$, $a \in A_1$, $b \in B_1$ which can be identified with pairs of constant maps. 
\item The elements of $K_G^{(i)}$ are pairs 
\[
(a_1^{(i)}, a_2^{(i)}), \quad
a_1^{(i)} \in A_1^{(i)} \cup \overline{A}_1^{(i)}, \
a_2^{(i)} \in A_1^{(i)} \cup \{ \infty^{(i)} \}. 
\]
\end{itemize}
%
\begin{figure}
\begin{center}
\scalebox{1}
{
\begin{tikzpicture}
[scale=1, 
LGRectangle/.style={draw, thick, rectangle, minimum height=3cm, minimum width=0.5cm},
KHRectangle/.style={draw, thick, rectangle, minimum height=2cm, minimum width=1.5cm},
KGRectangle/.style={draw, thick, rectangle, minimum height=4cm, minimum width=2.5cm}],
\tikzstyle{vertex}=[circle,draw=black, fill=black, inner sep = 0.3mm]
%
%
\node at (0,9) [LGRectangle] {};
\draw[thick] (-0.25,9.5)--(0.25,9.5);
\draw[thick] (-0.25,8.5)--(0.25,8.5);
\node (A1bar) at (-0.6,9) {$\overline{A}_1$};
\node (A1) at (-0.6,10) {$A_1$};
\node (LG) at (1,10+0.25) {$L_G$};
\node (R) at (-0.6,8) {$R$};
\draw[thick] (0,7.4)--(0,7.1);
%
\node at (0,6) [KHRectangle] {};
\node (KH) at (1.5,6.5+0.25) {$K_H$};
\node (A1KH) at (-1.125,6) {$A_1$};
\node (B1KH) at (0,4.75-0.125) {$B_1$};
\draw[thick] (1,5)--(1+0.3,5-0.3);
\draw[thick] (-1,5)--(-1-0.3,5-0.3);
%
\node at (-2,2.5) [KGRectangle] {};
\draw[thick] (-2-1.25,2.5)--(-2+1.25,2.5);
\draw[thick] (-1.25,4.5)--(-1.25,0.5);
\node (KGDash) at (-3.25,5) {$K_G^{(1)}$};
\node (A1DLeft) at (-2-1.625,2.5+1) {$A_1^{(1)}$};
\node (A1DBLeft) at (-2-1.625,2.5-1) {$\overline{A}_1^{(1)}$};
\node (A1DLeft2) at (-2-0.5+0.25,2.5-2.25-0.125) {$A_1^{(1)}$};
\node (InftyDLeft) at (-2-0.5+0.25+1.3+0.1,2.5-2.25-0.125) {$\infty^{(1)}$};
%
\node at (2,2.5) [KGRectangle] {};
\draw[thick] (2-1.25,2.5)--(2+1.25,2.5);
\draw[thick] (-1.25+4,4.5)--(-1.25+4,0.5);
\node (KGDashD) at (-3.25+6.5,5) {$K_G^{(2)}$};
\node (DA1DLeft) at (-2-1.625+4,2.5+1) {$A_1^{(2)}$};
\node (DA1DBLeft) at (-2-1.625+4,2.5-1) {$\overline{A}_1^{(2)}$};
\node (DA1DLeft2) at (-2-0.5+0.25+4,2.5-2.25-0.125) {$A_1^{(2)}$};
\node (DInftyDLeft) at (-2-0.5+0.25+1.3+4+0.1,2.5-2.25-0.125) {$\infty^{(2)}$};

\draw[thick] (1+0.3,5-5.3)--(1,5-0.3-5.3);
\draw[thick] (-1-0.3,5-5.3)--(-1,5-0.3-5.3);
%
\node (Z) at (0,-0.75) {$\{ 0 \}$};
\end{tikzpicture}
}
\end{center}
\caption{
An illustration of the structure of the finite band $B_{G,H}$. 
}\label{fig_BGH}
\end{figure}
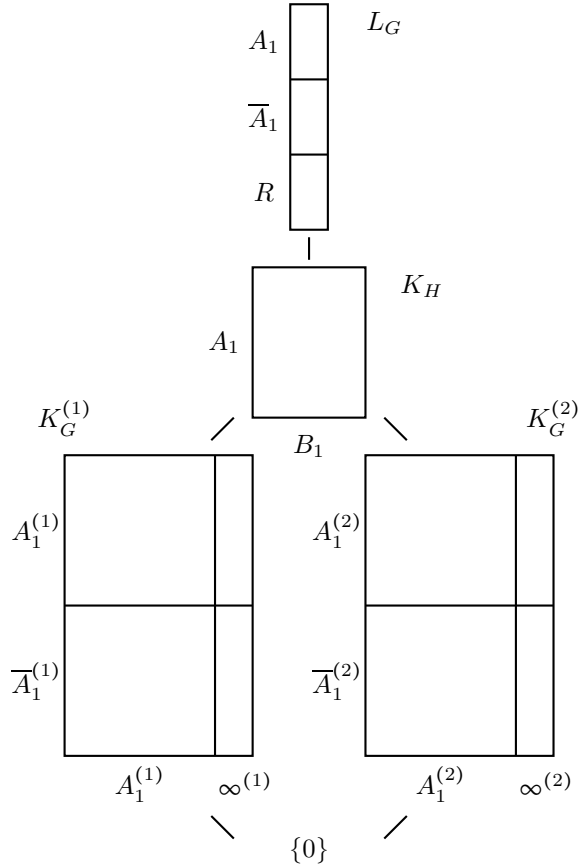
It follows from the definition of the construction $T(S,V,U)$ that, with the other notation introduced above, the elements 
$(\sigma, \tau) \in L_G$, $(a,b) \in K_H$, $(a_1^{(i)}, a_2^{(i)}) \in K_G^{(i)}$
%
%
multiply in the following way: 
\begin{align}
(\sigma,\tau)(a,b) & = (\sigma(a),b) &  
(a,b)(\sigma,\tau) & = (a, b\tau) \notag \\
(\sigma,\tau)(a_1^{(i)}, a_2^{(i)}) & =
(\sigma(a_1)^{(i)}, a_2^{(i)}) & 
(a_1^{(i)}, a_2^{(i)})(\sigma,\tau) & = 
(a_1^{(i)}, (a_2\tau)^{(i)}) \label{eqn_multiplication} \\
(a,b)(a_1^{(i)}, a_2^{(i)}) & =
(a^{(i)}, a_2^{(i)}) & 
(a_1^{(i)}, a_2^{(i)})(a,b) & = 
(a_1^{(i)}, b^{(i)}), \notag
\end{align}
while for $(a_1^{(1)},a_2^{(1)}) \in K_G^{(1)}$ and $(a_3^{(2)},a_4^{(2)}) \in K_G^{(2)}$ we have
\[
(a_1^{(1)},a_2^{(1)})(a_3^{(2)},a_4^{(2)}) = 
(a_3^{(2)},a_4^{(2)})(a_1^{(1)},a_2^{(1)}) = 0. 
\]
Within a single $\D$-class the multiplication is the usual rectangular band multiplication. 
An illustration of the band $B_{G,H}$ in given in Figure~\ref{fig_BGH}. It shows the five $\D$-classes, the indexing sets of each of these $\D$-classes, and their $\D$-class poset ordering.  

\section{The maximal subgroups of $\ig{\B_{G,H}}$}\label{sec_maxsub}

We shall now describe the maximal subgroups of $\ig{\B_{G,H}}$ showing that they are all either trivial or isomorphic to $G$, thus establishing Theorem~\ref{thm_SubG}. 

As a consequence of the property (IG3) from \cite{GR1} and results of \cite{BMM1}, the regular $\D$-classes of $\ig{\B_{G,H}}$
will be in a natural bijective correspondence with the $\D$-classes of $B_{G,H}$;
to emphasise this correspondence, we shall denote the
$\D$-classes in $\ig{\B_{G,H}}$ by
$\overline{L}_G$, $\overline{K}_H$, $\overline{K}_G^{(1)}$, $\overline{K}_G^{(2)}$ and $\overline{\{0\}}$.
The maximal subgroups of $\ig{\B_{G,H}}$ will be located
within these five regular $\D$-classes. Furthermore, since $B_{G,H}$ is a band---so that each $\H$-class 
consists of a single idempotent---it follows that each regular $\D$-class of $\ig{\B_{G,H}}$ is the union of its maximal subgroups. 

\begin{pro}
\label{maxsubgps}
The maximal subgroups of $\ig{\B_{G,H}}$ contained in $\overline{L}_G$, $\overline{K}_H$ or $\overline{\{0\}}$ are trivial, while those contained in $\overline{K}_G^{(1)}$ or $\overline{K}_G^{(2)}$
are isomorphic to $G$
\end{pro}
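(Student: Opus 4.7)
The plan is to treat each of the five $\D$-classes of $\ig{\B_{G,H}}$ separately using the presentation of maximal subgroups furnished by Theorem~\ref{thm_presentation}. The cases $\overline{\{0\}}$ (a singleton) and $\overline{L}_G$ are immediate: a direct calculation using the multiplication rules~\eqref{eqn_multiplication} shows that any two elements of $L_G$ are $\L$-related in $B_{G,H}$ but lie in distinct $\R$-classes, so the column index set $J$ in Theorem~\ref{thm_presentation} has size $1$ and the relations $f_{i,j(i)}=1$ already trivialise every generator.

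For $\overline{K}_H$ I would apply Theorem~\ref{thm_presentation} with index sets $I=A_1$ and $J=B_1$, fixing $e=e_{i_0,j_0}$ and choosing $j(i)=j_0$ for all $i\in A_1$, so that the relations $f_{i,j_0}=1$ hold throughout. The key computation is the enumeration of singular squares inside $K_H$. Every $(\sigma,\tau)\in L_G$ has $\sigma|_{A_1}$ constant and $\tau|_{B_1}$ equal to the identity; this rules out non-degenerate left-right singular squares but, for each $k\in A_1$, produces the up-down singular square $(i,k;j_0,l)$ (valid for every $i\neq k$ and every $l\in B_1$), singularised by $e_{\overline{k}}$. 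I would also verify that no idempotent from $K_H$, $K_G'$, $K_G''$ or $\{0\}$ can singularise a non-degenerate square in $K_H$: a $K_H$-element forces an equality of row or column indices in a putative square, while the remaining $\D$-classes send $K_H$ outside itself under multiplication. Setting $k=i_0$ in the singular-square relation~\eqref{rels53} yields $f_{i,l}=f_{i_0,l}$, and relation~\eqref{rels51} of Theorem~\ref{thm_presentation}, applied with the natural Schreier system $r_{j_0}\equiv\epsilon$, $r_j\equiv e_{i_0,j}$ for $j\neq j_0$, forces $f_{i_0,l}=1$ for every $l\in B_1$ (the equation $r_{j_0}e_{i_0,l}\equiv e_{i_0,l}=r_l$ holds in $\ig{\B_{G,H}}$ because $(e_{i_0,j_0},e_{i_0,l})$ is a basic pair). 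Together these collapse every generator to the identity.

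For $\overline{K}_G'$ and $\overline{K}_G''$ I would exploit the band isomorphism $L_G\cup K_G^{(i)}\cong B_G$. The decisive step is to verify that elements of $K_H$, $K_G^{(3-i)}$ and $\{0\}$ cannot singularise non-degenerate squares in $K_G^{(i)}$: an element of $K_H$ again forces a coincidence of row or column indices, while $K_G^{(3-i)}$ and $\{0\}$ annihilate $K_G^{(i)}$. Hence the singular squares in $K_G^{(i)}$ are precisely those produced by the action of $L_G$ exactly as they arise in $B_G$, and, choosing the Schreier system inside the subsemigroup $L_G\cup K_G^{(i)}$, the presentation of Theorem~\ref{thm_presentation} for the maximal subgroup of $e\in K_G^{(i)}$ in $\ig{\B_{G,H}}$ coincides with the one computed in \cite{DR} for the minimal $\D$-class of $\ig{\B_G}$, which is shown there to present $G$.

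I expect the main difficulty to lie in the $\overline{K}_H$ case: one must be careful to enumerate precisely which $L_G$-elements singularise which up-down squares in $K_H$, and then to combine the resulting singular-square relations with the Schreier-system relation~\eqref{rels51} in the correct order to collapse every generator $f_{il}$ to $1$.
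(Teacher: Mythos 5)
Your proposal is correct and follows essentially the same route as the paper: all five cases are handled via the presentation of Theorem~\ref{thm_presentation}, with $\overline{L}_G$ collapsing by relations \eqref{rels52} since $L_G$ is left zero, $\overline{K}_H$ collapsing because the idempotents $e_{\overline{a}}$ make every square singular, and $\overline{K}_G^{\prime}$, $\overline{K}_G^{\prime\prime}$ reducing (after checking that $K_H$ contributes only degenerate singular squares) to the band $L_G\cup K_G$ of \cite{DR}. Your write-up merely makes explicit some collapsing steps that the paper leaves implicit, and the extra verifications you flag (e.g.\ ruling out left--right singular squares in $K_H$) are harmless but not needed, since additional singular squares could only help the collapse.
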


\begin{proof}
All the statements follow from Theorem \ref{thm_presentation}.
Since $L_G$ is a left zero semigroup, it follows that for every generator $f$ of
$\overline{L}_G$ we have $f=1$ (defining relations \eqref{rels52}), and so the group is trivial.
Recall that the $\D$-class $K_H$ of $B_{G,H}$ is an $A_1\times B_1$ rectangular band.
Consider the action of $e_{\overline{a}}=(\sigma,\tau)\in L_G$ on $K_H$: $\sigma$ acts as the constant map on $A_1$ with value $a$, while $\tau$ acts as the identity map on $B_1$. 
It follows that $e_{\overline{a}}$ singularises every square of the form $(a,a_1; b,b_1)$ with $a_1\in A_1$ and $b,b_1\in B$.
By varying $a\in A_1$ we see that \emph{all} the squares are singular, and hence the presentation from Theorem \ref{thm_presentation}  again defines the trivial group.
The $\D$-class $\{0\}$ is trivial, and hence its counterpart $\overline{\{0\}}$ is trivial as well. 


Let us now consider the $\D$-class $\overline{K}_G^{(i)}$, $i=1,2$. The only idempotents of $B_{G,H}$ acting on $K_G$
are those from $L_G\cup K_H$. However, the idempotents from $K_H$ act by constant maps, and hence induce no non-trivial singular squares. It follows that the maximal subgroups of $\ig{\B_{G,H}}$ inside $\overline{K}_G^{(i)}$
are isomorphic to the maximal subgroups of the free idempotent generated semigroup over the band $L_G\cup K_G^{(i)}$ in the $\D$-class corresponding to $K_G^{(i)}$.
The band $L_G\cup K_G^{(i)}$ is isomorphic to that constructed in \cite{DR}, where it is proved that the maximal subgroups in question are isomorphic to $G$,
as required.
\end{proof}

It will be helpful for our subsequent exposition to outline briefly how the above mentioned argument from \cite{DR} proceeds.
It is a Tietze transformations argument, starting from the presentation given by Theorem \ref{thm_presentation}
in terms of the generators $f_{ij}$, $i\in I=A_1\cup\overline{A}_1$, $ j\in J=A_1\cup\{\infty\}$.
A systematic analysis of singular squares induced by the idempotents $e_a$ and $e_{\overline{a}}$ from $L_G$  shows that large collections of the above generators are equal to each other.
More specifically, it is proved that all the generators $f_{ax}$, with
$(a,x)\in (A_1\times A_1) \cup \{ (1,\infty)\} \cup (\overline{A}_1\times\{1\}) \cup \{  (\overline{1}, \infty)  \}$, are equal to $1$,
and that for each $a\in A$ the generators $f_{a,\infty}$, $f_{\overline{x}, a}$ ($x\in A_1$) and $f_{\overline{a},\infty}$
are all equal.
Renaming formally all the generators in this latter group as $a$, and considering the remaining singular squares induced by the idempotents $e_{\rrel} \in L_G$ $(\rrel \in \mathcal{R})$ yields the original presentation 
$\pre{A}{\mathcal{R}}$ of $G$. 
The relationship between the original generators $A$ and the generators $f_{ij}$ is summarised in the table in Figure~\ref{A-vs-fij},
and will be referred to throughout the technical argument in the following section.

\begin{figure}
\begin{center}
\begin{tikzpicture}
  \matrix (m) [matrix of math nodes,row sep=0.5em,column sep=0.1em,minimum width=2em]
  {
& 1 & a_1 & a_2 & \ldots & \infty
\\ 
1 & 1&1&1& \ldots & 1
\\
a_1 &1&1&1&& a_1
\\
a_2 & 1&1&1& \ldots & a_2
\\
\vdots & \vdots && \vdots & \ddots & \vdots
\\
&&&&&
\\
\overline{1} & 1 & a_1 & a_2 && 1
\\
\overline{a}_1 & 1 & a_1 & a_2 & \ldots & a_1
\\
\overline{a}_2 & 1 & a_1 & a_2 &  & a_2
\\
\vdots & \vdots & \vdots & \vdots && \vdots
\\
&&&&&     
     \\};
%
\draw[thick, color=gray!80] (0.75,-0.25-2.85)--(0.75,2.6-2.85);
\draw[thick, color=gray!80] (0,-0.25-2.85)--(0,2.6-2.85);
\draw[thick, color=gray!80] (-0.75,-0.25-2.85)--(-0.75,2.6-2.85);
%
%
\draw[thick] (-1.5,-0.25)--(1.5,-0.25)--(1.5,2.6)--(-1.5,2.6)--(-1.5,-0.25);
\draw[thick] (-1.5,-0.25-2.85)--(1.5,-0.25-2.85)--(1.5,2.6-2.85)--(-1.5,2.6-2.85)--(-1.5,-0.25-2.85);
%
%
\draw[thick] (1.5,-0.25)--(1.5,2.6)--(2.2,2.6)--(2.2,-0.25)--(1.5,-0.25);
\draw[thick] (1.5,-0.25-2.85)--(1.5,2.6-2.85)--(2.2,2.6-2.85)--(2.2,-0.25-2.85)--(1.5,-0.25-2.85);
%
\draw[thick] [decorate,decoration={brace,amplitude=10pt},xshift=0pt,yshift=-4pt,thick]
(-1.4,3.3) -- (2.2,3.3) node [black,midway,yshift=-0.6cm] 
{};
\node (J) at (0.4,3.9) {$J$};
%
\draw[thick] [decorate,decoration={brace,amplitude=10pt},xshift=0pt,yshift=-4pt,thick]
(-2.2,-2.8) -- (-2.2,2.6) node [black,midway,yshift=-0.6cm] 
{};
\node (I) at (-2.9,-0.2) {$I$};
\end{tikzpicture}
\end{center}
\caption{The relationship between the generators $A$ and generators $F = \{ f_{ij} : i \in I, j \in J \}$ of $G$, where $I = A_1 \cup \overline{A}_1$ and $J = A_1 \cup \{ \infty \}$.}
\label{A-vs-fij}
\end{figure}
%
%
%
\section{Proof of the Undecidability Result}
\label{sec:undec}

The aim of this section is to prove Theorem~\ref{thm_Memb}, which will follow from Proposition~\ref{prop_bigone} below.  

As it was already mentioned in the previous section, the fact that $B_{G,H}$ has five $\D$-classes is reflected in five
corresponding regular $\D$-classes of $\ig{\B_{G,H}}$. Furthermore,
 each $\D$-class of $B_{G,H}$ is a rectangular band, so  
each $\D$-class  of  $\ig{\B_{G,H}}$ is
completely simple, and so isomorphic to some Rees matrix semigroup (without a zero element) over any of the (mutually
isomorphic) maximal subgroups of that $\D$-class. 
We have seen above that all of the $\D$-classes $\overline{L}_G$, $\overline{K}_H$ and $\overline{\{0\}}$ have trivial maximal subgroups and so they are each rectangular bands, isomorphic to $L_G$, $K_H$ and $\{ 0 \}$, respectively. We have also seen above that 
 $\overline{K}_G^{(1)}$ and $\overline{K}_G^{(2)}$ are isomorphic completely simple semigroups each with maximal subgroup isomorphic to $G$. 
We now determine the
structure matrices in the Rees matrix representations of completely simple subsemigroups $\overline{K}_G^{(1)}$ and $\overline{K}_G^{(2)}$ of $\ig{\B_{G,H}}$.

Let $F = \{ f_{ij} \ : \ i\in I,\ j\in J \}$,
where, as before, $I=A_1 \cup \overline{A}_1$, $J=A_1 \cup \{ \infty \}$.
By Lemma~\ref{rho-computable} we have computable mappings, defined by the equations \eqref{pidef} and \eqref{rhodef}, where: 
\begin{align*}
\pi^{(i)} :  (K_G^{(i)})^* \rightarrow I \times (F \cup F^{-1})^* \times J, \\
\rho^{(i)} :  I \times (F \cup F^{-1})^* \times J \rightarrow (K_G^{(i)})^*,
\end{align*}
which induce mutually inverse isomorphisms between $\overline{K}_G^{(i)}$ and the fixed Rees matrix semigroup $\mathcal{M}[G; I, J; P]$ where $P = (f_{ij}^{-1})_{j \in J, i \in I}$. So, taking the transpose of the table in Figure~\ref{A-vs-fij} and inverting all the entries gives a Rees structure matrix for the isomorphic completely simple semigroups $\overline{K}_G^{(1)}$ and $\overline{K}_G^{(2)}$. 

The main result we want to establish is the following. 

\begin{pro}\label{prop_bigone}
For all $w \in (F \cup F^{-1})^*$ the equality
\[
(1^{(1)},1^{(1)})(1^{(2)},1^{(2)}) = \rho^{(1)}(1,w^{-1},1) \rho^{(2)}(1,w,1)
\]
holds in $\ig{\B_{G,H}}$ if and only if $w$ represents an element of $H$. 
\end{pro}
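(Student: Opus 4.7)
The plan is to prove the two implications separately: $(\Leftarrow)$ by induction, and $(\Rightarrow)$ by a structural analysis of derivations in $\ig{\B_{G,H}}$.

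For $(\Leftarrow)$, since $B = B^{-1}$ generates $H$, I would write $w = b_1 \cdots b_n$ with $b_i \in B$ and induct on $n$. The base case $n = 0$ is immediate. For the step, the factorisation $\rho'(1, uv, 1) = \rho'(1, u, 1)\, \rho'(1, v, 1)$, together with its double-primed analogue, follows from \eqref{eq_rhoformula} together with $f_{1,1} = 1$ in $G$ (read off Figure~\ref{A-vs-fij}); it reduces the step to the single-generator identity
\[
\rho'(1, b^{-1}, 1) \, \rho''(1, b, 1) = e_{11}' \, e_{11}'' \qquad (b \in B), \tag{$\dagger$}
\]
after which the outer factors $\rho'(1, w'^{-1}, 1)$ and $\rho''(1, w', 1)$ absorb $e_{11}'$ and $e_{11}''$ respectively via Rees-matrix idempotent actions, and the inductive hypothesis closes the step.

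The identity $(\dagger)$ is the crux. My plan for it is to exploit the two associations of the products $e_{11}'(1, b)e_{11}''$ and $e_{11}'(b, 1)e_{11}''$ (both meaningful since $b \in B_1 \cap A_1$), which yield the ``absorption'' identities $\rho'(1, 1, b)\, e_{11}'' = e_{11}' e_{11}''$ and $e_{11}' \, \rho''(b, 1, 1) = e_{11}' e_{11}''$. Combined with the $K_H$-action formulae $\rho'(1, g, 1)(1, b) = \rho'(1, g, b)$ and $(b, 1) \rho''(1, g, j) = \rho''(b, g, j)$, with the equality $(1, b)(b, 1) = (1, 1)$ in the rectangular regular $\D$-class $\overline{K}_H$ (which has trivial maximal subgroups, by Proposition~\ref{maxsubgps}), and with the Rees-matrix relations inside $\overline{K}_G'$ and $\overline{K}_G''$ that encode $b \cdot b^{-1} = 1$ in the maximal subgroup $\cong G$, a careful combination should yield $(\dagger)$.

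For $(\Rightarrow)$, I would assume the equality holds in $\ig{\B_{G,H}}$. The element $e_{11}' e_{11}''$ is non-regular, since $K_G' \cdot K_G'' = 0$ in $B_{G,H}$ does not form a basic pair, so every word representing it must bridge the two $\D$-classes $\overline{K}_G'$ and $\overline{K}_G''$ through the higher $\D$-classes $L_G$ and $K_H$. The $L_G$-relations reproduce exactly the $B_G$-presentation of $G$ from \cite{DR} and carry no information about $H$; the $H$-information enters solely through $K_H$-idempotents $(a, b)$, whose right coordinate $b$ is constrained to $B_1$. The plan is to introduce a normal form for words representing non-regular elements of the form $\rho'(1, u, 1) \rho''(1, v, 1)$, and to show that such a word equals $e_{11}' e_{11}''$ if and only if $(u, v) = (h^{-1}, h)$ for some $h \in H$; applied to $(u, v) = (w^{-1}, w)$, this forces $w \in H$. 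The hardest part will be establishing this normal-form characterisation: I expect to have to track, through an arbitrary sequence of defining-relation applications, how each $K_H$-step contributes a factor in $B_1$, so that the total group element assembled by the derivation lies in $H$. This will use Lemma~\ref{rho-computable} for the computability of $\rho$ and the Rees-matrix structure of the maximal subgroups (isomorphic to $G$, by Proposition~\ref{maxsubgps}), and is the main technical obstacle.
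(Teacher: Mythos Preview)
Your overall architecture matches the paper's closely. For $(\Rightarrow)$ the paper does exactly what you outline: it introduces an equivalence $\sim$ on pairs $(u,v)\in\mathcal{L}(\overline{K}_G')\times\mathcal{L}(\overline{K}_G'')$ generated by ``sliding'' a single idempotent $e\in L_G\cup K_H$ across the boundary (Definition~\ref{def_approx}), proves that $u_1v_1=u_2v_2$ in $\ig{\B_{G,H}}$ iff $(u_1,v_1)\sim(u_2,v_2)$ (Lemma~\ref{lem_SimRelation}), and then checks via Lemma~\ref{lemeactions} that each such step changes the $\rho''$-group component by an element of $B_1\subseteq H$ (Lemmas~\ref{lem_910a}--\ref{lem:H}). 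Your ``track the $K_H$-steps, note the $L_G$-steps contribute trivially'' is precisely Lemma~\ref{lem_910a}. For $(\Leftarrow)$ the paper proves the two-parameter version $\rho'(1,w_1,1)\rho''(1,w_2,1)=\rho'(1,w_1b^{-1},1)\rho''(1,bw_2,1)$ directly (Lemma~\ref{b-1b}) and iterates; your reduction to the single-generator identity $(\dagger)$ is an equivalent and perfectly valid reformulation.

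There is, however, a genuine gap in your sketch of $(\dagger)$. The ingredients you list --- the $K_H$-associations yielding $\rho'(1,1,b)e_{11}''=e_{11}'e_{11}''$ and its dual, the $K_H$-action formulae, the relation $(1,b)(b,1)=(1,1)$ in $\overline{K}_H$, and Rees-matrix relations internal to $\overline{K}_G'$ and $\overline{K}_G''$ --- do not suffice. The point is that every $K_H$-idempotent $(a,c)$ has $\im\sigma=\{a\}\subseteq A_1$ and $\im\tau=\{c\}\subseteq B_1$, so by Lemma~\ref{lemeactions} and Figure~\ref{A-vs-fij} its right action on $\rho'(1,w,j)$ contributes the trivial factor $f_{a,j}^{-1}f_{a,c}=1$ whenever $j\in A_1$, and its left action on $\rho''(i,w,j)$ contributes a nontrivial factor only when $i\in\overline{A_1}$. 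Starting from $(\rho'(1,1,1),\rho''(1,1,1))$ and using only $K_H$-moves, the $\rho''$-row coordinate stays in $A_1$ and both group components remain trivial; Rees-matrix rewritings inside $\overline{K}_G'$ or $\overline{K}_G''$ do not change the element represented, and any attempt to factor across the $'$/$''$ boundary lands you in a non-regular product you cannot simplify. What is missing is the passage through $\overline{A_1}$-rows and the $\infty$-column, which requires $L_G$-idempotents. The paper's Lemma~\ref{b-1b} does exactly this: the four-step $\approx$-chain uses $(1,b)$, then $e_b$, then $e_{\overline{b}}$, then $(1,1)$; the middle two steps (from $L_G$) are what actually pick up the factors $b$ and $b^{-1}$ via $f_{b,\infty}=b$ and $f_{\overline{1},b}=b$. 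Once you add these two $L_G$-idempotents to your toolkit, your plan for $(\dagger)$ goes through.
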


\begin{rmk}
Once again, a comment about the nature of the above equality: it purportedly holds in $\ig{\B_{G,H}}$, which in turn is defined by a presentation (specifically, \eqref{eq:igE}).
Accordingly, the two terms should be interpreted as words over the generating set for $\ig{\B_{G,H}}$, which is in fact the entirety of elements of $B_{G,H}$;
however, when treating it as an alphabet, we will denoted this set by $E$.
The elements $(1^{(1)},1^{(1)})$ and  $(1^{(2)},1^{(2)})$ are single idempotents (belonging to $K_G^{(1)}$ and $K_G^{(2)}$ respectively), while $\rho^{(1)}(1,w^{-1},1) $ and $\rho^{(2)}(1,w,1)$ are words $E^\ast$ by the definition of $\rho$.
\end{rmk}

To establish this proposition, we first need a criterion for certain equalities of words from $E^*$.
Given words $u_1, u_2 \in E^*$ both representing elements of $\overline{K}_G^{(1)}$ in $\ig{\B_{G,H}}$, and words $v_1, v_2 \in E^*$ both representing elements of $\overline{K}_G^{(2)}$, we want to know when the equality $u_1 v_1 = u_2 v_2$ holds in $\ig{\B_{G,H}}$. 
To this end, for $i=1,2$, let $\mathcal{L}(\overline{K}_G^{(i)})$  denote the set of words over $E$ representing elements of $\overline{K}_G^{(i)}$ in $\ig{\B_{G,H}}$. The following lemma describes these two sets. 
\begin{lem}\label{lem_reps}
Let $w \in E^*$ and $i\in\{1,2\}$. Then 
$w$ represents an element of $\overline{K}_G^{(i)}$ if and only if $w \in (L_G \cup K_H \cup K_G^{(i)})^*$ and $w$ contains at least one letter from $K_G^{(i)}$.
\end{lem}

\begin{proof}
We prove the assertion for $i=1$; the case $i=2$ is analogous.
According to Theorem~\ref{thm:testing_reg}, $w$ represents an element of 
$\overline{K}_G^{(1)}$ if and only if it has the form $w=uev$, where 
$e\in K_G^{(1)}$, and $ue\L e$ and $ev\R e$ in $B_{G,H}$.
From the definition of $B_{G,H}$ (and Figure \ref{fig_BGH}), it is clear that left/right multiplying an idempotent $e^\prime\in K_G^{(1)}$
by any element from $L_G\cup K_H\cup K_G^{(1)}$ results in an idempotent in the $\L$-/$\R$-class of $e^\prime$, whereas multiplication by an element from  $K_G^{(2)}\cup\{0\}$ yields $0$.
It follows that precisely the words from $(L_G\cup K_H\cup K_G^{(1)})^\ast$
are available for $u$ and $v$, yielding $w \in (L_G \cup K_H \cup K_G^{(1)})^*$,
with at least one letter (namely $e$) from $K_G^{(1)}$, as required.
\end{proof}

\begin{lem}\label{lem_remarkable} 
Let
$
(u,v) \in \mathcal{L}(\overline{K}_G^{(1)}) \times \mathcal{L}(\overline{K}_G^{(2)}).
$
If $u \equiv u'f$ where $f \in L_G \cup K_H$ then $(u',fv) \in \mathcal{L}(\overline{K}_G^{(1)}) \times \mathcal{L}(\overline{K}_G^{(2)})$. Similarly, if $v \equiv fv'$ where $f \in L_G \cup K_H$ then 
$(uf,v') \in \mathcal{L}(\overline{K}_G^{(1)}) \times \mathcal{L}(\overline{K}_G^{(2)})$. 
\end{lem}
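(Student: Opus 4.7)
The plan is to reduce the lemma directly to the alphabet/content characterisation of $\mathcal{L}(\overline{K}_G^\prime)$ and $\mathcal{L}(\overline{K}_G^{\prime\prime})$ provided by Lemma~\ref{lem_reps}. The key point to exploit is that the letter $f$ being moved lies in $L_G \cup K_H$, which is disjoint from both $K_G'$ and $K_G''$, so transferring $f$ from one side to the other preserves the presence of generators from $K_G'$ on the left and $K_G''$ on the right.

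First I would address the case $u \equiv u'f$ with $f \in L_G \cup K_H$. Since $u \in \mathcal{L}(\overline{K}_G^\prime)$, Lemma~\ref{lem_reps}(i) tells me that $u \in (L_G \cup K_H \cup K_G')^*$ and that $u$ contains at least one letter from $K_G'$. As $f \in L_G \cup K_H \subseteq L_G \cup K_H \cup K_G'$, the prefix $u'$ still lies in $(L_G \cup K_H \cup K_G')^*$; moreover, since $f \notin K_G'$, every occurrence of a letter from $K_G'$ in $u$ lies inside $u'$, so $u'$ contains at least one such letter. In particular $u'$ is non-empty, and Lemma~\ref{lem_reps}(i) gives $u' \in \mathcal{L}(\overline{K}_G^\prime)$. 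For $fv$, I observe that $v \in (L_G \cup K_H \cup K_G'')^*$ contains at least one letter from $K_G''$ by Lemma~\ref{lem_reps}(ii); prepending $f \in L_G \cup K_H$ neither violates the alphabet restriction nor removes any letter from $K_G''$, so Lemma~\ref{lem_reps}(ii) yields $fv \in \mathcal{L}(\overline{K}_G^{\prime\prime})$.

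The second assertion, when $v \equiv fv'$ with $f \in L_G \cup K_H$, is proved by the same reasoning with the roles of $K_G'$ and $K_G''$ interchanged: $v'$ inherits the required letter from $K_G''$ (it cannot be $f$), while appending $f$ to $u$ preserves both the alphabet condition and the presence of a $K_G'$ letter needed for $uf \in \mathcal{L}(\overline{K}_G^\prime)$.

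There is no real obstacle here; the only thing one must be careful about is verifying that $u'$ (respectively $v'$) remains non-empty, and this is automatic because the required $K_G'$-letter (respectively $K_G''$-letter) of the original word cannot coincide with $f$, being of a different type. The lemma is thus essentially a bookkeeping consequence of Lemma~\ref{lem_reps} combined with the disjointness of the $\D$-classes in the decomposition of $E$.
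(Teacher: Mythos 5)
Your argument is correct and follows exactly the route the paper takes: the paper simply declares the lemma an immediate consequence of Lemma~\ref{lem_reps}, and your write-up supplies the (routine) verification that removing or adding a letter from $L_G \cup K_H$ preserves both the alphabet condition and the presence of a $K_G'$- (respectively $K_G''$-) letter.
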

\begin{proof}
This is an immediate consequence of Lemma~\ref{lem_reps}
\end{proof}
This lemma forms the basis of some relations which we shall now define on the set $\mathcal{L}(\overline{K}_G^{(1)}) \times \mathcal{L}(\overline{K}_G^{(2)})$.

\begin{dfn}\label{def_approx}
For  $(u_1,v_1), (u_2,v_2) \in \mathcal{L}(\overline{K}_G^{(1)}) \times \mathcal{L}(\overline{K}_G^{(2)})$ we write 
\[
(u_1,v_1) \approx (u_2,v_2)
\]
if and only if  one of the following three conditions is satisfied:
\begin{enumerate}
\item[(i)]
$u_1=u_2$ and $v_1=v_2$ hold in $\ig{\B_{G,H}}$;
\item[(ii)] $u_1 = u_2 f$ and
$fv_1 = v_2$ hold in $\ig{\B_{G,H}}$ for some $f \in L_G \cup K_H$;
\item[(iii)] $u_1f=u_2$ and 
 $v_1 =fv_2$ hold in $\ig{\B_{G,H}}$ for some $f \in L_G \cup K_H$. 
\end{enumerate}
\end{dfn}
Note that $\approx$ is a reflexive and 
symmetric relation on the set $\mathcal{L}(\overline{K}_G^{(1)}) \times \mathcal{L}(\overline{K}_G^{(2)})$. 
Now let $\sim$ denote the  transitive closure of the relation $\approx$ on $\mathcal{L}(\overline{K}_G^{(1)}) \times \mathcal{L}(\overline{K}_G^{(2)})$. Note that $(u_1, v_1) \sim (u_2, v_2)$ does not imply $u_1 = u_2$ or $v_1 = v_2$ in $\ig{\B_{G,H}}$. However, if $u_1 = u_2$ and $v_1 = v_2$ in $\ig{\B_{G,H}}$ then by definition $(u_1,v_1) \sim (u_2,v_2)$. Thus the equivalence relation $\sim$ on $\mathcal{L}(\overline{K}_G^{(1)}) \times \mathcal{L}(\overline{K}_G^{(2)})$ induces a well-defined equivalence relation on the set $\overline{K}_G^{(1)} \times \overline{K}_G^{(2)}$. 

\begin{lem}\label{lem_claimA}
Let $(\alpha, \beta), (\alpha', \beta') \in \mathcal{L}(\overline{K}_G^{(1)}) \times \mathcal{L}(\overline{K}_G^{(2)})$. If $\alpha \beta \equiv \alpha' \beta'$ then 
$(\alpha, \beta) \sim (\alpha', \beta')$. 
\end{lem}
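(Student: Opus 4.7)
The plan is to reduce the claim to the case where one of the two splittings is a refinement of the other, and then shift letters from the first component to the second (or vice versa) one at a time, invoking clause (ii) or (iii) of Definition~\ref{def_approx}.

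First I would reduce to a canonical form. Since $\alpha\beta \equiv \alpha'\beta'$ as words in $E^*$, one of $\alpha,\alpha'$ is a prefix of the other; without loss of generality assume $\alpha'$ is a prefix of $\alpha$, so we can write $\alpha \equiv \alpha'\gamma$ and $\beta' \equiv \gamma\beta$ for some $\gamma \in E^*$. The whole task is then to show $(\alpha'\gamma,\beta) \sim (\alpha',\gamma\beta)$.

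Next I would pin down the structure of $\gamma$. By Lemma~\ref{lem_reps}, the word $\alpha = \alpha'\gamma$ lies in $(L_G \cup K_H \cup K_G')^*$, so no letter of $\gamma$ comes from $K_G''$ or $\{0\}$. Dually, $\beta' = \gamma\beta$ lies in $(L_G \cup K_H \cup K_G'')^*$, so no letter of $\gamma$ comes from $K_G'$ either. Hence $\gamma \in (L_G \cup K_H)^*$. A second application of Lemma~\ref{lem_reps} shows that the required $K_G'$-letter of $\alpha$ must actually occur in $\alpha'$ (since none occur in $\gamma$), so $\alpha' \in \mathcal{L}(\overline{K}_G^\prime)$; symmetrically $\beta \in \mathcal{L}(\overline{K}_G^{\prime\prime})$.

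Now I would perform the telescoping. Write $\gamma \equiv f_1 f_2 \cdots f_n$ with each $f_k \in L_G \cup K_H$, and consider the chain of pairs
\[
(\alpha'f_1\cdots f_n,\, \beta),\ (\alpha'f_1\cdots f_{n-1},\, f_n\beta),\ \ldots,\ (\alpha',\, f_1\cdots f_n\beta).
\]
By the preceding paragraph together with Lemma~\ref{lem_remarkable}, each of these lies in $\mathcal{L}(\overline{K}_G^\prime) \times \mathcal{L}(\overline{K}_G^{\prime\prime})$. Consecutive pairs differ by moving a single letter $f_k \in L_G \cup K_H$ from the end of the first component to the start of the second; this is precisely the identity situation of Definition~\ref{def_approx}(iii) (the required equalities $u_1 f = u_2$ and $v_1 = f v_2$ hold already in $E^*$, hence a fortiori in $\ig{\B_{G,H}}$). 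Therefore consecutive pairs are $\approx$-related, and taking the transitive closure gives $(\alpha,\beta) \sim (\alpha',\beta')$, as required.

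The argument is essentially routine once the observation that $\gamma$ must be a word over $L_G \cup K_H$ is in place; I do not anticipate a real obstacle beyond this bookkeeping. The slight subtlety to watch is verifying that every intermediate pair still satisfies the ``contains at least one letter from $K_G'$ (resp.\ $K_G''$)'' requirement of Lemma~\ref{lem_reps}, which is exactly what the second-paragraph observation about where the $K_G'$- and $K_G''$-letters must sit is for.
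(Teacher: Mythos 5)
Your proposal is correct and follows essentially the same route as the paper: factor out the overlap word $\gamma$, observe via Lemma~\ref{lem_reps} that $\gamma\in (L_G\cup K_H)^*$, and telescope one letter at a time using Definition~\ref{def_approx} together with Lemma~\ref{lem_remarkable} to keep the intermediate pairs in $\mathcal{L}(\overline{K}_G^\prime)\times\mathcal{L}(\overline{K}_G^{\prime\prime})$. The only (harmless) extra step is your verification that $\alpha'$ and $\beta$ lie in the relevant languages, which is already part of the hypothesis.
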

\begin{proof}
Without loss of generality we may suppose that $|\alpha'| > |\alpha|$. Write $\alpha' \equiv \alpha\gamma$ and $\beta \equiv \gamma\beta'$. Since 
\begin{align*}
& \alpha\gamma \equiv \alpha' \in \mathcal{L}(\overline{K}_G^{(1)}) \subseteq (L_G \cup K_H \cup K_G^{(1)})^* \ \mathrm{and}  \\
& \gamma\beta' \equiv \beta \in \mathcal{L}(\overline{K}_G^{(2)}) \subseteq (L_G \cup K_H \cup K_G^{(2)})^*, 
\end{align*}
it follows that $\gamma \in (L_G \cup K_H)^*$. Write $\gamma \equiv \gamma_1 \gamma_2 \ldots \gamma_r$ where each $\gamma_j \in L_G \cup K_H$. Then 
\[
(\alpha, \beta) =
(\alpha, \gamma_1 \gamma_2 \ldots \gamma_r \beta') \approx
(\alpha \gamma_1,  \gamma_2 \ldots \gamma_r \beta') \approx \ldots \approx
(\alpha \gamma_1 \gamma_2 \ldots \gamma_r,   \beta') = (\alpha', \beta'), 
\]
where each pair in this sequence belongs to $ \mathcal{L}(\overline{K}_G^{(1)}) \times \mathcal{L}(\overline{K}_G^{(2)})$ by Lemma~\ref{lem_remarkable}.
\end{proof}

\begin{lem}\label{lem_claimB}
Let $(u,v) \in  \mathcal{L}(\overline{K}_G^{(1)}) \times \mathcal{L}(\overline{K}_G^{(2)})$. Let $w \in E^*$ be a word obtained from the word $uv$ by the application of a single relation from the presentation for $\ig{\B_{G,H}}$. Then the word $w$ admits a decomposition $w \equiv \alpha\beta$ where $(\alpha, \beta) \in \mathcal{L}(\overline{K}_G^{(1)}) \times \mathcal{L}(\overline{K}_G^{(2)})$ and $(u,v) \sim (\alpha, \beta)$. 
\end{lem}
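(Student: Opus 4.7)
My plan is a case analysis on the location where the single relation is applied within $uv$. A defining relation has the form $e_1 e_2 = g$ for a basic pair $(e_1, e_2)$ with $g = e_1 * e_2$, and applying it to $uv$ either collapses some occurrence of $e_1 e_2$ to $g$ (length-decreasing) or expands some occurrence of $g$ to $e_1 e_2$ (length-increasing).

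\textbf{Case 1.} The replacement lies entirely inside $u$ or entirely inside $v$. I preserve the original boundary between $u$ and $v$, taking $\alpha, \beta$ to be the two resulting pieces. Then $\alpha = u$ (or $\beta = v$) in $\ig{\B_{G,H}}$, so Lemma~\ref{lem_reps} delivers the required memberships in $\mathcal{L}(\overline{K}_G^\prime)$ and $\mathcal{L}(\overline{K}_G^{\prime\prime})$, and condition (i) of Definition~\ref{def_approx} yields $(u,v) \approx (\alpha, \beta)$. Length-increasing applications always fall here, since the single letter being expanded lies entirely on one side of the boundary.

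\textbf{Case 2.} The replacement straddles the boundary. This forces the length-decreasing direction: $u \equiv u_0 e$, $v \equiv f v_0$, $g := ef$, $w \equiv u_0 g v_0$, with $e \in L_G \cup K_H \cup K_G^\prime$ and $f \in L_G \cup K_H \cup K_G^{\prime\prime}$. The key observation, which I expect to be the main conceptual step, is that the configuration $e \in K_G^\prime$ together with $f \in K_G^{\prime\prime}$ is impossible: the multiplication rules of the $T(\,\cdot\,)$ construction from Section~\ref{sec_BGH} give $ef = fe = 0$ in that case, and since $e, f \neq 0$ the basic pair condition $\{e, f\} \cap \{ef, fe\} \neq \es$ fails. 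Hence at least one of $e, f$ must lie in $L_G \cup K_H$, which is exactly what is needed in order to apply conditions (ii) or (iii) of Definition~\ref{def_approx}.

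Explicitly: if $f \in L_G \cup K_H$, I take $\alpha \equiv u_0 g$ and $\beta \equiv v_0$ (noting that $g \in L_G \cup K_H \cup K_G^\prime$ since this set is a subsemigroup of $B_{G,H}$ isomorphic to $B_G$), yielding $uf = u_0 g = \alpha$ and $v = f\beta$ in $\ig{\B_{G,H}}$, which is condition (iii). If instead $e \in L_G \cup K_H$ and $f \in K_G^{\prime\prime}$, I take $\alpha \equiv u_0$ and $\beta \equiv g v_0$ (with $g \in K_G^{\prime\prime}$), yielding $u = \alpha e$ and $ev = \beta$, which is condition (ii). The remaining routine bookkeeping, via Lemma~\ref{lem_reps}, is to verify that $\alpha$ still contains a $K_G^\prime$-letter and $\beta$ a $K_G^{\prime\prime}$-letter; this is ensured because the only potentially vanishing letters of the relevant types are $e$ and $f$, the fact that one of them lies in $L_G \cup K_H$ guarantees the critical letter on the opposite side survives in $u_0$ or $v_0$, and when $e \in K_G^\prime$ the new letter $g$ itself supplies the needed $K_G^\prime$-letter in $\alpha$.
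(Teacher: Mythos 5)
Your argument is correct and follows essentially the same route as the paper's proof: split into the case where the relation is applied within one factor and the case where it straddles the boundary, rule out $e \in K_G'$ together with $f \in K_G''$ via the basic pair condition (the paper phrases this as $\J$-incomparability of $K_G'$ and $K_G''$, you via $ef=fe=0$ — the same point), and then exhibit the required $\approx$-step with the surviving letter from $L_G \cup K_H$. The only (harmless) slip is the parenthetical claim that $L_G \cup K_H \cup K_G'$ is isomorphic to $B_G$ — it is indeed a subsemigroup of $B_{G,H}$, but it has the extra $\D$-class $K_H$; since only the subsemigroup property is used to place $g$, the proof stands.
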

\begin{proof}
Recall that the defining relations of $\ig{\B_{G,H}}$ are all of the form $ef = g$ where $(e, f)$ is a basic pair (meaning $\{e, f \} \cap \{ef, fe\} \neq \varnothing$) and $ef=g$ holds in the band $B_{G,H}$. 
Keeping in mind that $\mathcal{L}(\overline{K}_G^{(1)})$ is the set of all words from  $E^*$ representing elements of $\overline{K}_G^{(1)}$, 
if the relation that is applied to transform $uv$ into $w$ is applied entirely within the subword $u$, transforming it into $u^\prime$, and hence
transforming $uv$ into $u'v \equiv w$ where $u'=u$ in $\ig{\B_{G,H}}$, then the lemma trivially holds by taking $\alpha \equiv u'$ and $\beta \equiv v$. Similarly the lemma is easily seen to hold if  the relation is applied entirely within the subword $v$. 
The only remaining case left to consider is when $u \equiv u'e$, $v \equiv fv'$, $w \equiv u'gv'$ and $w$ is obtained from $uv$ by applying the relation $ef=g$ from the presentation of $\ig{\B_{G,H}}$. Since 
\begin{align*}
& u'e \equiv u \in \mathcal{L}(\overline{K}_G^{(1)}) \subseteq (L_G \cup K_H \cup K_G^{(1)})^* \ \mathrm{and}  \\
& fv' \equiv v \in \mathcal{L}(\overline{K}_G^{(2)}) \subseteq (L_G \cup K_H \cup K_G^{(2)})^*, 
\end{align*}
it follows that $e \in (L_G \cup K_H) \cup K_G^{(1)}$ and $f \in (L_G \cup K_H) \cup K_G^{(2)}$. As $\{e, f \}$ is a basic pair, and the $\D$-classes $K_G^{(1)}$ and $K_G^{(2)}$ are incomparable in the $\J$-class order in the band $B_{G,H}$, it follows that we  cannot have both $e \in K_G^{(1)}$ and $f \in K_G^{(2)}$. This means that we must have $e \in L_G \cup K_H$ or $f \in L_G \cup K_H$ (or possibly both). 

Suppose first that $e \in L_G \cup K_H$. Then 
setting $\alpha \equiv u'$ and $\beta \equiv gv'$
we have $w \equiv u'gv' \equiv \alpha\beta$ and 
\[
(u,v) = (u'e, fv') \approx (u', efv') \approx (u', gv') = (\alpha, \beta). 
\]
Here $(u', efv') \in \mathcal{L}(\overline{K}_G^{(1)}) \times \mathcal{L}(\overline{K}_G^{(2)})$ by Lemma~\ref{lem_remarkable} since $e \in L_G \cup K_H$.

Finally, the case that $f \in L_G \cup K_H$ follows by a dual argument. 
This deals with all possible cases, and thus completes the proof. 
\end{proof}

The following lemma shows the importance of the relation $\sim$ in connection with the word problem. 

\begin{lem}\label{lem_SimRelation}
Let 
$
(u_1, v_1), (u_2, v_2) \in \mathcal{L}(\overline{K}_G^{(1)}) \times \mathcal{L}(\overline{K}_G^{(2)}).
$
Then $u_1v_1 = u_2v_2$ in 
$\ig{\B_{G,H}}$ if and only if $(u_1, v_1) \sim (u_2, v_2)$. 
\end{lem}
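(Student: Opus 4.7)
The proof splits into the two implications, with the forward direction being the substantial one.

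For the easy direction ($\Leftarrow$), I would observe that $\sim$ is defined as the transitive closure of $\approx$, so by a routine induction on the length of an $\approx$-chain it suffices to verify that $(u_1,v_1)\approx(u_2,v_2)$ implies $u_1v_1=u_2v_2$ in $\ig{\B_{G,H}}$. In case (i) of Definition~\ref{def_approx} this is immediate. In case (ii), $u_1v_1 = u_2 f v_1 = u_2 v_2$ in $\ig{\B_{G,H}}$, and case (iii) is handled dually.

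For the forward direction ($\Rightarrow$), the plan is to exploit the fact that equality $u_1v_1 = u_2v_2$ in $\ig{\B_{G,H}}$ is witnessed by a finite sequence of elementary rewriting steps. More precisely, there exist words
\[
u_1 v_1 \equiv w_0,\ w_1,\ \ldots,\ w_n \equiv u_2 v_2
\]
such that each $w_{k+1}$ is obtained from $w_k$ by a single application of a defining relation of $\ig{\B_{G,H}}$. I will construct, by induction on $k$, a sequence of decompositions $w_k \equiv \alpha_k\beta_k$ with $(\alpha_k,\beta_k)\in\mathcal{L}(\overline{K}_G^\prime)\times\mathcal{L}(\overline{K}_G^{\prime\prime})$ and $(u_1,v_1)\sim(\alpha_k,\beta_k)$. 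The base case takes $\alpha_0\equiv u_1$, $\beta_0\equiv v_1$. For the inductive step, applying Lemma~\ref{lem_claimB} to the pair $(\alpha_k,\beta_k)$ (whose concatenation is $w_k$, rewritten into $w_{k+1}$ by one relation) produces the desired decomposition $w_{k+1}\equiv\alpha_{k+1}\beta_{k+1}$ with $(\alpha_k,\beta_k)\sim(\alpha_{k+1},\beta_{k+1})$; the inductive hypothesis then yields $(u_1,v_1)\sim(\alpha_{k+1},\beta_{k+1})$ by transitivity of $\sim$.

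At the end of this process I have a decomposition $w_n \equiv \alpha_n\beta_n$ of the word $u_2v_2$, belonging to $\mathcal{L}(\overline{K}_G^\prime)\times\mathcal{L}(\overline{K}_G^{\prime\prime})$, with $(u_1,v_1)\sim(\alpha_n,\beta_n)$. Lemma~\ref{lem_claimA} applied to the two decompositions $\alpha_n\beta_n \equiv u_2v_2$ of the same word over $E$ gives $(\alpha_n,\beta_n)\sim(u_2,v_2)$, and one final use of transitivity of $\sim$ completes the argument.

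The principal conceptual work has already been done in Lemmas~\ref{lem_remarkable}, \ref{lem_claimA}, and \ref{lem_claimB}; the present lemma is essentially a derivation-chasing argument that bundles them together. The only subtlety to guard against is that the decompositions produced by Lemma~\ref{lem_claimB} lie in $\mathcal{L}(\overline{K}_G^\prime)\times\mathcal{L}(\overline{K}_G^{\prime\prime})$ at every intermediate step so that the inductive hypothesis and the applications of $\approx$ remain meaningful, but this is precisely what Lemma~\ref{lem_claimB} guarantees.
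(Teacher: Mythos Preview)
Your proof is correct and follows essentially the same approach as the paper's own proof: for ($\Leftarrow$) reduce to a single $\approx$-step and check each case of Definition~\ref{def_approx}, and for ($\Rightarrow$) walk along a chain of elementary rewrites, invoking Lemma~\ref{lem_claimB} at each step to maintain a decomposition in $\mathcal{L}(\overline{K}_G^\prime)\times\mathcal{L}(\overline{K}_G^{\prime\prime})$ that is $\sim$-related to $(u_1,v_1)$, and then apply Lemma~\ref{lem_claimA} at the final step to link up with $(u_2,v_2)$. The only difference is cosmetic (your indexing starts at $0$ rather than $1$).
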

\begin{proof}
($\Leftarrow$) Clearly it suffices to prove if $(u_1, v_1) \approx (u_2, v_2)$ then $u_1 v_1 = u_2 v_2$, so suppose that $(u_1, v_1) \approx (u_2, v_2)$. In all three cases (i)--(ii) of Definition~\ref{def_approx} it is immediately seen that $u_1 v_1 = u_2 v_2$ in $\ig{\B_{G,H}}$.

($\Rightarrow$) Let 
$
(u_1, v_1), (u_2, v_2) \in \mathcal{L}(\overline{K}_G^{(1)}) \times \mathcal{L}(\overline{K}_G^{(2)}),
$
and suppose that $u_1v_1 = u_2v_2$ in $\ig{\B_{G,H}}$. This means there is a sequence of words from $E^*$ 
\[
u_1v_1 \equiv w_1 = w_2 = \ldots = w_k \equiv u_2v_2,
\]
such that for each $j$, the word $w_{j+1}$ is obtained from $w_j$ by the application of a single relation from the defining relations of $\ig{\B_{G,H}}$. Working along this sequence and repeatedly applying Lemma~\ref{lem_claimB} we conclude that for each $1 \leq j \leq k$ the word $w_j$ admits a decomposition $w_j \equiv \alpha_j \beta_j$ such that $(\alpha_j, \beta_j) \in \mathcal{L}(\overline{K}_G^{(1)}) \times \mathcal{L}(\overline{K}_G^{(2)})$ and 
\[
(u_1, v_1) \sim (\alpha_1, \beta_1) \sim (\alpha_2, \beta_2) \sim 
\ldots \sim (\alpha_k, \beta_k). 
\]
Finally, $u_2v_2 \equiv w_k \equiv \alpha_k\beta_k$ 
with $(u_2, v_2), (\alpha_k, \beta_k) \in \mathcal{L}(\overline{K}_G^{(1)}) \times \mathcal{L}(\overline{K}_G^{(2)})$ 
and so by Lemma~\ref{lem_claimA} it follows that $(\alpha_k, \beta_k) \sim (u_2, v_2)$, and thus $(u_1, v_1) \sim (u_2, v_2)$.  
\end{proof}

Next we record how the elements from 
$L_G \cup K_H$ act on $\overline{K}_G^{(1)}$ and
$\overline{K}_G^{(2)}$ in $\ig{\B_{G,H}}$. 

\begin{lem}
\label{lemeactions}
Let $e = (\sigma, \tau) \in K_H \cup L_G$ and let 
$(i, w, j) \in 
I \times (F \cup F^{-1})^* \times J$. Further choose arbitrary $i_0 \in \im(\sigma)$ and $j_0 \in \im(\tau)$. Then
\[
\rho^{(1)}(i,w,j)\;e =
\rho^{(1)}(i, \ 
w f_{i_0 j}^{-1} f_{i_0, j\tau}, 
\ 
j\tau)
\]
and
\[
e\;\rho^{(2)}(i,w,j) =
\rho^{(2)}
(\sigma(i), \
f_{\sigma(i), j_0} f_{ij_0}^{-1} w, \
j
)
\]
both hold in $\ig{\B_{G,H}}$. 
\end{lem}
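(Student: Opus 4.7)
The plan is to establish both identities by inserting a suitable idempotent from $K_G^\prime$ (respectively $K_G^{\prime\prime}$) that acts as a right (respectively left) identity on the relevant $\H$-class, and then invoking a single defining relation of $\ig{\B_{G,H}}$ together with \eqref{eq_rhoformula} to complete the computation.

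For the first identity, I would proceed in four steps. First, I show that right multiplication by the idempotent $e_{i_0,j}'$ of $K_G'$ preserves $\rho'(i,w,j)$. Since $e_{i_0,j}'$ is the unique idempotent of the band $K_G'$ lying in $H_{i_0,j}$, under the Rees matrix isomorphism it corresponds to the triple $(i_0, f_{i_0,j}, j)$, so $e_{i_0,j}' = \rho'(i_0, f_{i_0, j}, j)$ in $\ig{\B_{G,H}}$. Applying \eqref{eq_rhoformula} together with $f_{i_0,j}^{-1}f_{i_0,j} = 1$ in $G$ gives
\[
\rho'(i,w,j) \cdot e_{i_0,j}' = \rho'(i, w f_{i_0,j}^{-1}f_{i_0,j}, j) = \rho'(i,w,j).
\]
Second, I verify that whenever $i_0 \in \im(\sigma)$, the pair $(e_{i_0,j}', e)$ is basic. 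Direct inspection of the multiplication rules \eqref{eqn_multiplication} yields $e \cdot e_{i_0,j}' = e_{i_0,j}'$ in $B_{G,H}$: if $e = (\sigma,\tau) \in L_G$ then $\sigma(i_0)=i_0$, since $\sigma$ is idempotent and $i_0\in\im(\sigma)$; while if $e=(a,b)\in K_H$, then $\im(\sigma)=\{a\}$ forces $i_0=a$. Either way $ee_{i_0,j}'=e_{i_0,j}' \in \{e, e_{i_0,j}'\}$, so the pair is basic.

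Third, the defining relation from \eqref{eq:igE} gives $e_{i_0,j}'\cdot e = e_{i_0,j}' \ast e = e_{i_0,j\tau}'$ in $\ig{\B_{G,H}}$, where the value of the partial product is computed via \eqref{eqn_multiplication} (using $\tau|_{A_1}=\mathrm{id}$ for $e\in L_G$ and $(j)\tau=b$ for $e=(a,b)\in K_H$). Finally, combining these facts and applying \eqref{eq_rhoformula} once more,
\[
\rho'(i,w,j)\cdot e = \rho'(i,w,j)\cdot e_{i_0,j\tau}' = \rho'(i,w,j)\cdot \rho'(i_0, f_{i_0,j\tau}, j\tau) = \rho'(i, w f_{i_0,j}^{-1}f_{i_0,j\tau}, j\tau),
\]
which is the required identity.

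The second identity is proved by a dual argument: insert a left identity $e_{i,j_0}''$ with $j_0\in\im(\tau)$ and observe that $\rho''(i,w,j) = e_{i,j_0}''\cdot\rho''(i,w,j)$ by \eqref{eq_rhoformula}; verify $e_{i,j_0}''\cdot e = e_{i,j_0}''$ in $B_{G,H}$ (using $j_0\in\im(\tau)$) so that $(e, e_{i,j_0}'')$ is a basic pair; invoke the defining relation $e\cdot e_{i,j_0}'' = e_{\sigma(i),j_0}''$; and apply \eqref{eq_rhoformula} to rewrite $e_{\sigma(i),j_0}''\cdot \rho''(i,w,j)$ as $\rho''(\sigma(i), f_{\sigma(i),j_0}f_{i,j_0}^{-1}w, j)$.

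The main technical point is the basic pair verification, which hinges on the case analysis between $e\in L_G$ and $e\in K_H$ via the multiplication rules \eqref{eqn_multiplication}; the hypotheses $i_0\in\im(\sigma)$ and $j_0\in\im(\tau)$ are precisely what is needed to ensure the corresponding basic pair condition and thus convert $e$ into a biorder-level defining relation. Once this is in place, the remaining manipulations are routine computations in the Rees matrix semigroup via \eqref{eq_rhoformula}.
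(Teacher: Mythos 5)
Your proof is correct and follows essentially the same route as the paper's: insert the idempotent $e'_{i_0,j}$ (resp.\ $e''_{i,j_0}$), which the hypothesis $i_0\in\im(\sigma)$ (resp.\ $j_0\in\im(\tau)$) turns into a basic pair with $e$, apply the resulting defining relation of $\ig{\B_{G,H}}$, and finish with \eqref{eq_rhoformula}. The only cosmetic difference is that the paper justifies the insertion of $e'_{i_0,j}$ via the band relation $e'_{i_kj}e'_{i_0j}=e'_{i_kj}$ (with $e'_{i_kj}$ the last letter of $\rho'(i,w,j)$) rather than via \eqref{eq_rhoformula}.
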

\begin{proof}
We prove the first equality only; the second follows by a dual argument. 
Since $i_0 \in \im \sigma$ the rules of multiplication given by \eqref{eqn_multiplication} imply that
\begin{align}
e(i_0^{(1)},j^{(1)})=(i_0^{(1)},j^{(1)}) \  \mbox{and}   \ (i_0^{(1)},j^{(1)})e=(i_0^{(1)},j\tau^{(1)}) \label{dagger}
\end{align}
in $B_{G,H}$. 
Hence $\{e, (i_0^{(1)},j^{(1)}) \}$ is a basic pair, and the above relations are among the defining relations for $\ig{\B_{G,H}}$. 
By the definition of the mapping $\rho$ in equation \eqref{rhodef} we have 
\begin{equation}
\label{almostthere}
\rho^{(1)}(i,w,j)\equiv (i_1^{(1)},j_1^{(1)})\dots (i_{k-1}^{(1)},j_{k-1}^{(1)}) (i_k^{(1)},j_k^{(1)})\equiv  u (i_k^{(1)},j_k^{(1)})
\end{equation}
for some $i_1, \ldots, i_k \in I$, $j_1, \ldots, j_k \in J$. Notice that necessarily $i=i_1$ and $j=j_k$. 
Also, recall that the structure matrix for the Rees matrix representation of $\overline{K}_G^{(1)}$ is given by $P=(f_{ij}^{-1})_{j\in J,i\in I}$.
It follows that the element $(i_0,f_{i_0,j\tau},j\tau)$ is an idempotent in this Rees matrix semigroup, and hence
\begin{equation}
\label{almostthere1}
(i_0^{(1)},j\tau^{(1)})=\rho^{(1)}(i_0,f_{i_0,j\tau},j\tau)
\end{equation}
in $\ig{\B_{G,H}}$.
Now we have
\begin{align*}
\rho^{(1)}(i,w,j) e & \equiv  
u (i_k^{(1)},j_k^{(1)}) e 
\equiv 
u (i_k^{(1)}, j^{(1)}) e  && \mbox{(by \eqref{almostthere})}\\
& = 
u ((i_k^{(1)}, j^{(1)}) (i_0^{(1)}, j^{(1)})) e & & \mbox{(since $(i_k^{(1)}, j^{(1)}) \L (i_0^{(1)}, j^{(1)})$)} \\
& = 
u (i_k^{(1)}, j^{(1)}) ((i_0^{(1)}, j^{(1)}) e) 
\\
& = 
u (i_k^{(1)}, j^{(1)}) (i_0^{(1)},j\tau^{(1)}) & & \mbox{(by \eqref{dagger})}
\\
& = 
\rho^{(1)}(i,w,j) (i_0^{(1)},j\tau^{(1)}) \\
& = 
\rho^{(1)}(i,w,j) 
\rho^{(1)}(i_0, f_{i_0, j\tau}, j\tau)
 & & \mbox{(by \eqref{almostthere1})} \\
& =   \rho^{(1)}(i, \ 
w f_{i_0 j}^{-1} f_{i_0, j\tau}, 
\ 
j\tau) & & \mbox{(by \eqref{eq_rhoformula}).}
\end{align*}
as required.
\end{proof}

\begin{lem}\label{lem_910a}
Suppose $u_1, u_2 \in \mathcal{L}(\overline{K}_G^{(1)})$, $v_1, v_2 \in \mathcal{L}(\overline{K}_G^{(2)})$ are such that $(u_1, v_1) \approx (u_2, v_2)$. If 
$v_1 = \rho^{(2)}(i_1, w_1, j_1)$ and $v_2 =  \rho^{(2)}(i_2, w_2, j_2)$ in $\ig{\E}$  
then the word $w_1 w_2^{-1} \in (F \cup F^{-1})^*$ represents an element of $H$. 
\end{lem}
\begin{proof}
Without loss of generality we assume that $u_1 \equiv u_2e$, $v_2 \equiv ev_1$, where $e = (\sigma, \tau) \in L_G \cup K_H$. By Lemma~\ref{lemeactions} we have
\[
 \rho^{(2)}(i_2, w_2, j_2)  =  
 v_2 = e v_1 = e \rho^{(2)}(i_1, w_1, j_1) 
 =  \rho^{(2)}(\sigma(i_1), f_{\sigma(i_1), j_0} f_{i_1 j_0}^{-1} w_1, j_1),
\]
where $j_0 \in \im \tau$, 
and it follows that $w_1 w_2^{-1} = f_{i_1 j_0} f_{\sigma(i_1), j_0}^{-1}$. 
Now, if $e \in L_G$, we must have $j_0 \in A_1$ and both $i_1$ and $\sigma(i_1)$ belong to one of $A_1$ or $\overline{A}_1$ (see Table~\ref{table_table}). In either case we have $f_{i_1 j_0} = f_{\sigma(i_1), j_0}$ (see Figure~\ref{A-vs-fij}) and hence $w_1w_2^{-1}$ is equal to the empty word, and so represents the identity element, which belongs to $H$. 
If on the other hand, $e = (a,b) \in K_H$ then we must have $j_0 = b \in B_1$. 
From Figure~\ref{A-vs-fij} we see that the only entries in the column $b$ are $1$ and $b$. Hence 
$f_{i_1 j_0}, f_{\sigma(i_1), j_0} \in \{1, b \}$, implying  $w_1 w_2^{-1} \in \{ 1, b, b^{-1} \} \subseteq B_1$. Since every element of $\{ 1, b, b^{-1} \}$ represents an element of $H$, this completes the proof. 
\end{proof}

\begin{lem}\label{lem_910b}
Suppose $u_1, u_2 \in \mathcal{L}(\overline{K}_G^{(1)})$, $v_1, v_2 \in \mathcal{L}(\overline{K}_G^{(2)})$ are such that $(u_1, v_1) \sim (u_2, v_2)$. If 
$v_1 = \rho^{(2)}(i_1, w_1, j_1)$ and $v_2 =  \rho^{(2)}(i_2, w_2, j_2)$ in $\ig{\E}$  
then $w_1 w_2^{-1} \in (F \cup F^{-1})^*$ represents an element of $H$. 
\end{lem}
\begin{proof}
By definition of $\sim$, there exists a sequence
\[
(u_1,v_1)=(x_1,y_1)\approx (x_2,y_2)\approx\dots\approx (x_t,y_t)=(u_2,v_2).
\]
For $1 \leq r \leq t$ let $k_r \in I$, $l_r \in J$ and $z_r \in (F \cup F^{-1})^*$ be such that 
$y_r = \rho^{(2)}(k_r,z_r,l_r)$. 
Then by Lemma \ref{lem_910a} the word $z_r z_{r+1}^{-1}$ represents an element of $H$ for all $r=1,\dots,t$.
Hence, the product 
\[
(z_1z_2^{-1})(z_2z_3^{-1})\dots (z_{t-1}z_t^{-1})
\]
also represents an element of $H$. But 
\[
z_1z_2^{-1}z_2z_3^{-1}\dots z_{t-1}z_t^{-1} = z_1z_t^{-1} = w_1w_2^{-1}
\]
in $G$ and we conclude that $w_1w_2^{-1}$ represents an element of $H$. 
%
\end{proof}

\begin{lem}\label{lem:H} 
If the equality
\[
\rho^{(1)}(i, w_1, j_1) \rho^{(2)}(i_1, w_2, j) =
\rho^{(1)}(i, w_3, j_2) \rho^{(2)}(i_2, w_4, j)
\]
holds in $\ig{\B_{G,H}}$, for some $i, i_1, i_2 \in I$, $j, j_1, j_2 \in J$ and 
$w_1, w_2, w_3, w_4 \in (F \cup F^{-1})^*$, then $w_2 w_4^{-1}$ represents an element of $H$. 
\end{lem}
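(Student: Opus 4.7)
The plan is to observe that this lemma is essentially an immediate synthesis of Lemma~\ref{lem_SimRelation} and Lemma~\ref{lem_910b}, with only a brief verification of the set-membership hypotheses. Specifically, the first step is to check that the four words in the statement belong to the correct side of the product: by construction of $\rho'$ and $\rho''$ (see equation \eqref{rhodef}), the word $\rho'(i,w_1,j_1)$ lies in $(E\cap \overline{K}_G^\prime)^*$, so it represents an element of $\overline{K}_G^\prime$ in $\ig{\B_{G,H}}$, and analogously $\rho''(i_1,w_2,j)$ represents an element of $\overline{K}_G^{\prime\prime}$. Hence
\[
\bigl(\rho'(i, w_1, j_1),\, \rho''(i_1, w_2, j)\bigr),\ \bigl(\rho'(i, w_3, j_2),\, \rho''(i_2, w_4, j)\bigr) \in \mathcal{L}(\overline{K}_G^\prime) \times \mathcal{L}(\overline{K}_G^{\prime\prime}).
\]

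Next, the hypothesis gives exactly that $u_1v_1 = u_2v_2$ in $\ig{\B_{G,H}}$ with $u_1\equiv\rho'(i,w_1,j_1)$, $v_1\equiv \rho''(i_1,w_2,j)$, $u_2\equiv \rho'(i,w_3,j_2)$, $v_2\equiv \rho''(i_2,w_4,j)$. By the forward implication of Lemma~\ref{lem_SimRelation}, we conclude $(u_1,v_1)\sim (u_2,v_2)$.

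Finally, applying Lemma~\ref{lem_910b} to this pair, with $v_1=\rho''(i_1,w_2,j)$ and $v_2=\rho''(i_2,w_4,j)$, yields directly that $w_2w_4^{-1}\in(F\cup F^{-1})^*$ represents an element of $H$, which is the desired conclusion.

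There is no real obstacle in this proof: all the analytic work (controlling what $\approx$-steps do to the $\rho''$-representatives, and unwinding the case distinction between $e\in L_G$ and $e\in K_H$) is packaged inside Lemma~\ref{lem_910a}; its iteration into $\sim$ is packaged in Lemma~\ref{lem_910b}; and the translation from the word-problem equality to $\sim$ is the content of Lemma~\ref{lem_SimRelation}. The lemma as stated is therefore just a matter of specialising these previous results to the particular form of the two sides of the displayed equality, noting that the third coordinate of both $\rho''$-triples is the common $j$ (a feature inherited automatically because equality in $\ig{\B_{G,H}}$ forces the two products to lie in the same $\L$-class of $\overline{K}_G^{\prime\prime}$, although this is not even needed for the argument above).
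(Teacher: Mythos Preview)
Your proof is correct and follows essentially the same approach as the paper's own proof: set $u_1\equiv\rho'(i,w_1,j_1)$, $v_1\equiv\rho''(i_1,w_2,j)$, $u_2\equiv\rho'(i,w_3,j_2)$, $v_2\equiv\rho''(i_2,w_4,j)$, observe these lie in $\mathcal{L}(\overline{K}_G^\prime)\times\mathcal{L}(\overline{K}_G^{\prime\prime})$, apply Lemma~\ref{lem_SimRelation} to obtain $(u_1,v_1)\sim(u_2,v_2)$, and then invoke Lemma~\ref{lem_910b}. Your additional commentary and the closing remark about the shared coordinate $j$ are accurate but not needed for the argument.
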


\begin{proof}
Set $u_1\equiv\rho^{(1)}(i, w_1, j_1)$,
$v_1\equiv\rho^{(2)}(i_1, w_2, j)$, $u_2\equiv\rho^{(1)}(i, w_3, j_2)$, $v_2\equiv\rho^{(2)}(i_2, w_4, j)$. Then we have
$
(u_1, v_1), (u_2, v_2) \in \mathcal{L}(\overline{K}_G^{(1)}) \times \mathcal{L}(\overline{K}_G^{(2)})
$
and since $u_1v_1 = u_2v_2$ in $\ig{\B_{G,H}}$ it follows from Lemma~\ref{lem_SimRelation} that $(u_1, v_1) \sim (u_2, v_2)$. Then applying 
Lemma~\ref{lem_910b} we conclude that $w_2 w_4^{-1}$ represents an element of $H$. 
\end{proof}

\begin{lem}
\label{b-1b}
For any two words $w_1,w_2\in (F\cup F^{-1})^\ast$ and any $b\in B$, the relation
\[
\rho^{(1)} (1,w_1,1) \rho^{(2)} (1,w_2,1)=
\rho^{(1)} (1,w_1b^{-1},1) \rho^{(2)} (1,bw_2,1)
\]
holds in $\ig{\B_{G,H}}$.
\end{lem}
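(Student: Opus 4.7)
The plan is to invoke Lemma~\ref{lem_SimRelation} to convert the desired equality in $\ig{\B_{G,H}}$ into the $\sim$-equivalence
\[
(\rho^\prime(1,w_1,1),\rho^{\prime\prime}(1,w_2,1))\sim (\rho^\prime(1,w_1 b^{-1},1),\rho^{\prime\prime}(1,bw_2,1)).
\]
I will then produce this equivalence via an explicit chain of five $\approx$-moves, each given by a specific idempotent $f\in L_G\cup K_H$ and justified by a routine computation using Lemma~\ref{lemeactions} together with the entries of $f_{ij}$ recorded in Figure~\ref{A-vs-fij}.

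The driving observation is that the only entries of the form $f_{ij}$ equalling $b^{\pm 1}$ are $f_{\overline{b},b}=f_{b,\infty}=f_{\overline{b},\infty}=b$. Consequently $e_{\overline{b}}\in L_G$ acting on $\rho^\prime(1,w,\infty)$ on the right yields $\rho^\prime(1,wb^{-1},1)$, which is essentially the only way to introduce a $b$-factor into the group component of $\rho^\prime$ using an idempotent from $L_G\cup K_H$; symmetrically, a $K_H$-idempotent $(c,b)$ acting on $\rho^{\prime\prime}(i,W,j)$ with $i\in\overline{A_1}$ introduces an $f_{\overline{a},b}^{-1}=b^{-1}$ factor on the left of $W$. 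The explicit chain I will write down is
\begin{align*}
(\rho^\prime(1,w_1,1),\rho^{\prime\prime}(1,w_2,1))
&\approx (\rho^\prime(1,w_1 b,\infty),\rho^{\prime\prime}(b,w_2,1)) \\
&\approx (\rho^\prime(1,w_1,b),\rho^{\prime\prime}(\overline{b},bw_2,1)) \\
&\approx (\rho^\prime(1,w_1,\infty),\rho^{\prime\prime}(\overline{b},bw_2,1)) \\
&\approx (\rho^\prime(1,w_1 b^{-1},1),\rho^{\prime\prime}(\overline{b},bw_2,1)) \\
&\approx (\rho^\prime(1,w_1 b^{-1},1),\rho^{\prime\prime}(1,bw_2,1)),
\end{align*}
implemented successively using $e_{\overline{b}}$ (Definition~\ref{def_approx}(ii)), $(b,b)\in K_H$ (Definition~\ref{def_approx}(iii)), $e_b$ (Definition~\ref{def_approx}(ii)), $e_{\overline{b}}$ (Definition~\ref{def_approx}(iii)), and $(1,1)\in K_H$ (Definition~\ref{def_approx}(ii)).

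The main obstacle is recognising the need for this multi-step route: no single $\approx$-move can bridge the endpoints, because every $f\in L_G\cup K_H$ either has $\im(\sigma)\subseteq A_1$ (when $f\in K_H$) or has $\tau|_{A_1}=\mathrm{id}$ (when $f\in L_G$), and $\infty\notin\im(\tau)$ in either case. These restrictions forbid simultaneously restoring the $\L$-coordinate of $u$ to $1$ and introducing a $b^{-1}$ into its group component in one step. The resolution is to route the chain through intermediate pairs whose ``inaccessible'' coordinates---last coordinate $\infty$ in $u$ and first coordinate $\overline{b}$ in $v$---enable the entries $f_{b,\infty}$ and $f_{\overline{b},b}$ to carry the $b$-factor between the two sides.
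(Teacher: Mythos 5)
Your proof is correct and takes essentially the same approach as the paper: both reduce the claim to a $\sim$-equivalence via Lemma~\ref{lem_SimRelation} and then exhibit an explicit chain of $\approx$-moves verified with Lemma~\ref{lemeactions} and the table of $f_{ij}$ in Figure~\ref{A-vs-fij}; the paper does it in four moves (intervening idempotents $(1,b)$, $e_b$, $e_{\overline{b}}$, $(1,1)$), while your five-move chain through $(\rho'(1,w_1b,\infty),\rho''(b,w_2,1))$ also checks out. The only slip is in your motivating remark: $f_{\overline{x},b}=b$ for \emph{every} $x\in A_1$ (not only $x=b$), but this does not affect the validity of your chain.
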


\begin{proof}
According to Lemma \ref{lem_SimRelation}, it suffices to prove that
\[
(\rho^{(1)} (1,w_1,1), \rho^{(2)} (1,w_2,1)) \sim
(\rho^{(1)} (1,w_1b^{-1},1) ,\rho^{(2)} (1,bw_2,1)).
\]
We do this by demonstrating
\begin{align*}
(\rho^{(1)} (1,w_1,1), \rho^{(2)} (1,w_2,1))
&\approx 
(\rho^{(1)} (1,w_1,b), \rho^{(2)} (\overline{1},bw_2,1))
\\
&\approx 
(\rho^{(1)} (1,w_1,\infty), \rho^{(2)} (\overline{b},bw_2,1))
\\
&\approx 
(\rho^{(1)} (1,w_1b^{-1},1), \rho^{(2)}(\overline{b},bw_2,1))
\\
&\approx 
(\rho^{(1)} (1,w_1b^{-1},1), \rho^{(2)} (1,bw_2,1)).
\end{align*}
The four `intervening' idempotents $(\sigma_1,\tau_1),(\sigma_2,\tau_2),(\sigma_3,\tau_3),(\sigma_4,\tau_4)$
used to establish these $\approx$-relationships are
$(1,b)\in K_H$, $e_b\in L_G$, $e_{\overline{b}}\in L_G$, and $(1,1)\in K_H$ respectively.
The actual computations, which all rely on Lemma \ref{lemeactions} and Figure~\ref{A-vs-fij}, are as follows:
\begin{align*}
&
\rho^{(1)}(1,w_1,1)(\sigma_1,\tau_1)=\rho^{(1)}(1,w_1 f_{11}^{-1}f_{1b},b)=\rho^{(1)}(1,w_1,b)
\\
&
(\sigma_1,\tau_1)\rho^{(2)}(\overline{1},bw_2,1)=
\rho^{(2)}(1,f_{1b}f_{\overline{1},b}^{-1}bw_2,1)
=\rho^{(2)}(1,1\cdot b^{-1}\cdot bw_2,1)=\rho^{(2)}(1,w_2,1)
\\
&
\rho^{(1)}(1,w_1,\infty)(\sigma_2,\tau_2)=\rho^{(1)}(1,w_1 f_{1,\infty}^{-1}f_{1b},b)=\rho^{(1)}(w_1\cdot 1\cdot 1,b)=\rho^{(1)}(1,w_1,b)
\\
&
(\sigma_2,\tau_2)\rho^{(2)} (\overline{1},bw_2,1)=\rho^{(2)}
(\overline{b},f_{\overline{b},1}f_{\overline{1},1}^{-1}bw_2,1)
=\rho^{(2)}(\overline{b},1\cdot 1\cdot bw_2,1)=\rho^{(2)}(\overline{b},bw_2,1)
\\
&
\rho^{(1)}(1,w_1,\infty)(\sigma_3,\tau_3)=\rho^{(1)}(1,w_1 f_{b,\infty}^{-1}f_{bb},1)=\rho^{(1)}(w_1\cdot b^{-1}\cdot 1,1)=\rho^{(1)}(1,w_1b^{-1},1)
\\
&
(\sigma_3,\tau_3)\rho^{(2)} (\overline{b},bw_2,1)=\rho^{(2)}
(\overline{b},f_{\overline{b},1}f_{\overline{b},1}^{-1}bw_2,1)
=\rho^{(2)}(\overline{b},bw_2,1)
\\
&
\rho^{(1)}(1,w_1b^{-1},1)(\sigma_4,\tau_4)=\rho^{(1)}(1,w_1 b^{-1} f_{11}^{-1}f_{11},1)=
\rho^{(1)}(1,w_1b^{-1},1)
\\
&
(\sigma_4,\tau_4)\rho^{(2)}
(\overline{b},bw_2,1)=\rho^{(2)} (1,f_{11}f_{\overline{b},1}^{-1}bw_2,1)
=\rho^{(2)}(1,1\cdot 1\cdot bw_2,1)=\rho^{(2)}(1,bw_2,1).
\end{align*}
This proves the lemma.
\end{proof}

\begin{proof}[Proof of Proposition \ref{prop_bigone}]
($\Rightarrow$)
The given equality can be written as
\[
\rho^{(1)} (1,1,1)\rho^{(2)}(1,1,1)=
\rho^{(1)} (1,w^{-1},1)\rho^{(2)} (1,w,1),
\]
and it follows from Lemma \ref{lem:H} that $1w^{-1}$, and hence $w$ itself, both represent elements of $H$.
\smallskip

($\Leftarrow$)
Suppose $w$ represents an element of $H$, and write $w=b_1\dots b_k$, a product of generators from $B$.
Repeatedly applying Lemma \ref{b-1b} we have
\begin{align*}
\rho^{(1)}(1,1,1)\rho^{(2)}(1,1,1)
&=
\rho^{(1)} (1,b_k^{-1},1)\rho^{(2)}(1,b_k,1)\\
&=\rho^{(1)} (1,b_k^{-1}b_{k-1}^{-1},1)\rho^{(2)}(1,b_{k-1}b_k,1)
\\
&= \dots  \; \; = \rho^{(1)} (1,b_k^{-1}\dots b_{1}^{-1},1)\rho^{(2)}(1,b_{1}\dots b_k,1)\\
&=\rho^{(1)} (1,w^{-1},1)\rho^{(2)}(1,w,1),
\end{align*}
a sequence of equalities valid in $\ig{\B_{G,H}}$.
\end{proof}

We are now finally in the position to prove Theorem \ref{thm_Memb}, which asserts that if $\ig{\B_{G,H}}$ has decidable word problem then the membership problem for $H$ in $G$ is decidable.

\begin{proof}[Proof of Theorem \ref{thm_Memb}]
Suppose that $\ig{\B_{G,H}}$ has decidable word problem.
Recall that $F$ is a finite generating set for $G$, and 
consider an arbitrary $w\in (F\cup F^{-1})^\ast$.
By Lemma \ref{rho-computable} the words
$\rho^{(1)}(1,w^{-1},1),\rho^{(2)}(1,w,1)\in E^\ast$ are
effectively computable.
By Proposition  \ref{prop_bigone} the word $w$ represents an element of $H$ if and only if
\[
(1^{(1)},1^{(1)}) ( 1^{(2)},1^{(2)}) = \rho^{(1)}(1,w^{-1},1)\rho^{(2)}(1,w,1)
\]
holds in $\ig{\B_{G,H}}$. This equality in turn can be checked using the decision algorithm for the word problem
for $\ig{\B_{G,H}}$.
\end{proof}

\section{Some remarks on Sch\"{u}tzenberger groups and further questions}\label{sec_conc}

The authors hope that the present article will mark the transition of focus in research on free idempotent 
generated semigroups from the regular to the non-regular part of $\ig{\E}$. One may anticipate that the next 
stage is to analyse the so-called Sch\"{u}tzenberger groups of $\H$-classes in non-regular $\D$-classes. These 
were originally introduced by Sch\"{u}tzenberger \cite{Sch1,Sch2} as `virtual' counterparts to maximal subgroups 
in regular $\D$-classes. In fact, our construction described in Sections \ref{sec_undec1}--\ref{sec:undec}
already gives some information relevant to such analysis which seems worth recording.

Let $D$ be an arbitrary $\D$-class of  $S$, and let $X\subseteq D$ be one of its $\H$-classes. Then the set of all elements 
$s\in S^1$ such that $Xs\subseteq X$ is denoted by  $\mathrm{Stab}(X)$ and called the  the (\emph{right}) \emph{stabiliser} 
of $X$. Then Green's Lemma \cite[Lemma 2.2.1]{HoBook} ensures that $s\in\mathrm{Stab}(X)$ actually implies $Xs=X$ and that 
the right translation $\rho_s:x\mapsto xs$, $x\in X$, is a permutation of $X$. Define an equivalence $\sigma_X$ on 
$\mathrm{Stab}(X)$ by $(s,t)\in\sigma_X$ is and only if $\rho_s=\rho_t$ (i.e.\ $s,t$ induce  the same permutation 
on $X$). Then the quotient set $\mathrm{Stab}(X)/\sigma_X$ is naturally identified with the collection of permutations 
$\{\rho_s: s\in\mathrm{Stab}(X)\}$, and the latter is easily seen to be a group: this is the \emph{(right) 
Sch\"{u}tzenberger group} $\Gamma_X$ of $X$. Here are several basic facts about Sch\"{u}tzenberger groups:
\begin{itemize}
\item $\Gamma_X$ acts regularly on $X$; consequently $|\Gamma_X|=|X|$;
\item If $X$ is a group then $\Gamma_X\cong X$;
\item If $X'$ is an $\H$-class contained in the same $\D$-class as $X$ then $\Gamma_X\cong\Gamma_{X'}$.
\end{itemize}
We refer to \cite[Section 2.3]{LaBook} for proofs of these facts. 

We believe that the following result sheds a bit more light on the `background' of the undecidability result 
proved in this paper.

\begin{pro}\label{pro:Schutz-H}
If $X$ is the $\H$-class of the element $$\rho^{(1)}(1,1,1)\rho^{(2)}(1,1,1)$$ in $\ig{\B_{G,H}}$, then the 
Sch\"{u}tzenberger group $\Gamma_X$ of $X$ is isomorphic to $H$.
\end{pro}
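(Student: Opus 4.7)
The plan is to exhibit an explicit isomorphism $\bar\phi\colon H \to \Gamma_X$. For each $h \in H$ choose any word $w_h \in (F \cup F^{-1})^*$ representing $h$ in $G$, and set $\phi(h) := \rho''(1, w_h, 1) \in \overline{K}^{\prime\prime}_G$; this is well defined because the group $\H$-class of $e_{11}^{\prime\prime}$ in $\overline{K}^{\prime\prime}_G$ is in natural bijection with $G$. Since $f_{11} = 1$ in the Rees structure, $\phi$ is a monoid homomorphism with $\phi(1) = e_{11}^{\prime\prime}$ and $\phi(h_1)\phi(h_2) = \rho''(1, w_{h_1} w_{h_2}, 1) = \phi(h_1 h_2)$.

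First I would show that each $\phi(h)$ stabilises $X$. The relation $z\phi(h)\, \R\, z$ is immediate from $z\phi(h)\phi(h^{-1}) = z\phi(1) = z\, e_{11}^{\prime\prime} = z$. For the $\L$-relation, iterating Lemma~\ref{b-1b} along a factorisation $h = b_1\cdots b_k$ with $b_i \in B$ gives
\[
z\phi(h) \;=\; \rho'(1,1,1)\rho''(1, w_h, 1) \;=\; \rho'(1, w_h, 1)\rho''(1, 1, 1) \;=\; \rho'(1, w_h, 1)\cdot z,
\]
so $z\phi(h) \in \ig{\B_{G,H}}^1 z$, while Proposition~\ref{prop_bigone} yields $\rho'(1, w_{h^{-1}}, 1)\cdot z\phi(h) = \rho'(1, w_{h^{-1}}, 1)\rho''(1, w_h, 1) = z$. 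Green's Lemma then promotes $\rho_{\phi(h)}$ to a permutation of $X$, and passing to the quotient by $\sigma_X$ produces the homomorphism $\bar\phi\colon H \to \Gamma_X$.

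Because $\Gamma_X$ acts regularly on $X$, the orbit map $\gamma \mapsto \gamma \cdot z$ is a bijection $\Gamma_X \to X$, so it suffices to show that $h \mapsto z\phi(h)$ is a bijection $H \to X$. For injectivity, if $z\phi(h_1) = z\phi(h_2)$, then left-multiplying by $\rho'(1, w_{h_1^{-1}}, 1)$ and applying Proposition~\ref{prop_bigone} to the left-hand side reduces the identity to $z = \rho'(1, w_{h_1^{-1}}, 1)\rho''(1, w_{h_2}, 1)$, from which I want to conclude $h_1 = h_2$. For surjectivity, any $y \in X$ is a product of idempotents, and the descriptions of $\mathcal{L}(\overline{K}^\prime_G)$ and $\mathcal{L}(\overline{K}^{\prime\prime}_G)$ from Section~\ref{sec:undec}, combined with Lemma~\ref{lem_SimRelation}, force $y$ to admit a representation $y = \rho'(1, u, 1)\rho''(1, v, 1)$; the $\H$-relation with $z$ then pins the pair $(u,v)$ down (up to the equivalence of Lemma~\ref{b-1b}) to the form $(w_h, 1)$ with $h \in H$.

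The main obstacle is that Proposition~\ref{prop_bigone} only characterises when a product of the special form $\rho'(1, w^{-1}, 1)\rho''(1, w, 1)$ equals $z$, whereas the injectivity and surjectivity arguments above require the sharper statement that an arbitrary equality $\rho'(1, u, 1)\rho''(1, v, 1) = z$ in $\ig{\B_{G,H}}$ already forces both $v \in H$ and $uv = 1$ in $G$. The membership $v \in H$ is supplied directly by Lemma~\ref{lem:H}. The remaining condition $uv = 1$ I would extract by tracking the $\approx$-moves of Definition~\ref{def_approx} on the $u$-coordinate as well as on the $v$-coordinate: each such move alters $u$ and $v$ by correlated factors coming from Lemma~\ref{lemeactions}, and a computation in the spirit of Lemma~\ref{lem_910a} shows that the image of the product $uv$ in $G$ is invariant under every such move. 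With this refinement in place both injectivity and surjectivity of $h \mapsto z\phi(h)$ follow, and $\bar\phi$ is then seen to be an isomorphism $H \to \Gamma_X$.
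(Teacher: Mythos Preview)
Your overall strategy is correct and matches the paper's: establish the strengthened version of Proposition~\ref{prop_bigone} (the paper calls this Corollary~\ref{cor_bigone}), use it together with Lemma~\ref{b-1b} to parametrise $X$ bijectively by $H$ (the paper's Lemma~\ref{lem-w1w2}), and then invoke regularity of the Sch\"utzenberger action. Your identification of the main obstacle, and your proposal to track both coordinates through the $\approx$-moves, are exactly right.

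One technical point deserves correction. You assert that the product $uv$ in $G$ is invariant under each $\approx$-move; this is not literally true, because the intermediate pairs in a $\sim$-chain need not have row and column indices equal to $1$. If a move takes $(\rho'(1,u,j),\rho''(i,v,1))$ to $(\rho'(1,u',j'),\rho''(i',v',1))$ via $e=(\sigma,\tau)$, then Lemma~\ref{lemeactions} gives (choosing $i'\in\im\sigma$, $j\in\im\tau$)
\[
u = u' f_{i'j'}^{-1} f_{i'j},\qquad v' = f_{i'j} f_{ij}^{-1} v,
\]
so $v'u' = f_{i'j} f_{ij}^{-1}(vu) f_{i'j}^{-1} f_{i'j'}$, which is not $vu$ in general. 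What \emph{is} preserved is the relation $vu = f_{ij}$: if it holds before the move it holds after (this is precisely the paper's Lemma~\ref{lem-uv}). Since at the two endpoints $i=j=1$ and $f_{11}=1$, this gives $vu=1$ in $G$ as you want. So your plan works once you track the correct invariant.

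Your surjectivity sketch is also slightly loose: it is not immediate that an arbitrary $y\in X$ decomposes as $\rho'(1,u,1)\rho''(1,v,1)$ with those particular indices. The paper handles this more directly by observing that any element of $R_z$ has the form $z\cdot s = \rho'(1,1,1)\rho''(1,w,j)$ (since $\rho''(1,1,1)s$ must still lie in $\overline{K}_G''$), and dually for $L_z$; matching the two forces $j=1$ and puts you exactly in the situation of Lemma~\ref{lem:eq-uv}.
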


It is worth noticing that this immediately implies a `non-regular analogue' of the main result of \cite{DR}.

\begin{thm} 
Any (finitely presented) group arises as a Sch\"{u}tzenberger group of an $\H$-class belonging to a non-regular $\D$-class 
in a free idempotent generated semigroup over a (finite) band. 
\end{thm}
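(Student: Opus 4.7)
The plan is to combine Proposition~\ref{pro:Schutz-H} with a short non-regularity observation.

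Given an arbitrary finitely presented group $G$, the first step is to feed $G$ into the $B_{G,H}$ construction of Section~\ref{sec_BGH} with the subgroup $H$ taken to be $G$ itself. A standard Tietze transformation argument produces a presentation $G = \langle A \mid \mathcal{R}\rangle$ in which every defining relation has the form $ab=c$ and for which $A^{-1}\subseteq A$; then $B=A$ satisfies $B\subseteq A$, $B^{-1}=B$, and generates $H=G$. All the hypotheses of the $B_{G,H}$ construction are thereby met, and $B_{G,G}$ is a finite band. By Proposition~\ref{pro:Schutz-H}, the Sch\"{u}tzenberger group of the $\H$-class $X$ of
\[
e_{11}^\prime e_{11}^{\prime\prime} = \rho^\prime(1,1,1)\,\rho^{\prime\prime}(1,1,1)
\]
in $\ig{\B_{G,G}}$ is isomorphic to $H=G$.

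The only substantive remaining step is to verify that $X$ lies in a non-regular $\D$-class of $\ig{\B_{G,G}}$. For this I would invoke Theorem~\ref{thm:testing_reg} on the length-two word $w\equiv e_{11}^\prime e_{11}^{\prime\prime}$. If $w$ represented a regular element, then it would admit a decomposition $w\equiv uev$ with $e\in E$ and $ue\L e\R ev$ in $\ig{\B_{G,G}}$; by length considerations only two candidates exist, namely $(u,e,v)=(\epsilon, e_{11}^\prime, e_{11}^{\prime\prime})$ (forcing $e_{11}^\prime\R e_{11}^\prime e_{11}^{\prime\prime}$) and its dual. In the first case, Lemma~\ref{lem_ExtremeHowie} would supply idempotents $g,h\in E=B_{G,G}$ such that $g\L e_{11}^\prime$ (so $g\in K_G^\prime$ by Lemma~\ref{lem_zero}) and such that $(e_{11}^{\prime\prime}, g)$ is a basic pair in the biorder satisfying $e_{11}^{\prime\prime}\, g=g$. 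However, since $K_G^\prime$ and $K_G^{\prime\prime}$ are incomparable $\D$-classes of the band $B_{G,G}$ (see Figure~\ref{fig_BGH}), any product of an element of $K_G^\prime$ with an element of $K_G^{\prime\prime}$ equals $0$ in $B_{G,G}$; in particular $e_{11}^{\prime\prime}\, g=0\neq g$ and $g\,e_{11}^{\prime\prime}=0$, so $(e_{11}^{\prime\prime},g)$ fails to be a basic pair and no such $g$ can exist. The second case is symmetric.

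The hard part of the argument has been entirely absorbed into Proposition~\ref{pro:Schutz-H}; once that is available, the non-regularity step above is immediate from the gross structure of $B_{G,G}$ and is the only place where one must argue directly. Combining the two steps, the finite band $E=B_{G,G}$ together with its free idempotent generated semigroup $\ig{\E}$ furnishes an $\H$-class $X$ sitting inside a non-regular $\D$-class, whose Sch\"{u}tzenberger group $\Gamma_X$ is isomorphic to $G$, which is exactly what the theorem asserts.
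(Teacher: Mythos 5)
Your proof is correct and takes essentially the same route as the paper, which obtains the theorem as an immediate consequence of Proposition~\ref{pro:Schutz-H} applied with $H=G$ (and $B=A$, $A^{-1}=A$). The only addition is your explicit check, via Theorem~\ref{thm:testing_reg} together with the incomparability of $K_G'$ and $K_G''$ in the band, that $e_{11}'e_{11}''$ is non-regular; the paper treats this as immediate, and your verification of it is sound.
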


Now we verify Proposition \ref{pro:Schutz-H}.

\begin{lem}\label{lem-uv}
Assume 
$$ 
\rho^{(1)}(1,1,1)\rho^{(2)}(1,1,1) = \rho^{(1)}(1,u,j)\rho^{(2)}(i,v,1) 
$$ 
holds in $\ig{\B_{G,H}}$ for some $u,v\in (F\cup F^{-1})^\ast$, $i\in I$, $j\in J$. Then $vu=f_{ij}$ holds in $G$. 
\end{lem}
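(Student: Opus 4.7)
The plan is to track a suitable $G$-valued invariant along any $\sim$-chain connecting the two pairs, and identify that invariant with the ``formal Rees matrix product'' that the expression $\rho'(1,u_r,j'_r)\rho''(i''_r,v_r,1)$ would be assigned if the product were taking place inside a single regular $\D$-class.

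First, Lemma~\ref{lem_SimRelation} applied to the given equality furnishes a chain
\[
(\rho'(1,1,1),\rho''(1,1,1))=(x_1,y_1)\approx(x_2,y_2)\approx\cdots\approx(x_t,y_t)=(\rho'(1,u,j),\rho''(i,v,1)).
\]
Each $\approx$-move multiplies $x_r$ on the right, and $y_r$ on the left, by an idempotent in $L_G\cup K_H$; these operations preserve the $\R$-class of $x_r$ and the $\L$-class of $y_r$, so $x_r=\rho'(1,u_r,j'_r)$ and $y_r=\rho''(i''_r,v_r,1)$ throughout. Define
\[
\Phi_r:=u_r\,f_{i''_r,\,j'_r}^{-1}\,v_r\in G.
\]
Its endpoint values are $\Phi_1=1\cdot f_{11}^{-1}\cdot 1=1$ (using $f_{11}=1$, Figure~\ref{A-vs-fij}) and $\Phi_t=u\,f_{ij}^{-1}\,v$; so proving that $\Phi_r$ is independent of $r$ will give $uf_{ij}^{-1}v=1$ in $G$, which rearranges to the desired $f_{ij}=vu$.

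To check invariance, I would use Lemma~\ref{lemeactions} to compute the effect of a single $\approx$-step $x_r=x_{r+1}e$, $y_{r+1}=ey_r$, where $e=(\sigma,\tau)\in L_G\cup K_H$. Writing $i=i''_r$, $j=j'_{r+1}$, $k=\sigma(i)$, $l=j\tau$, and choosing any $i_0\in\im\sigma$, $j_0\in\im\tau$, this gives
\[
u_r=u_{r+1}f_{i_0,j}^{-1}f_{i_0,l},\quad j'_r=l,\quad i''_{r+1}=k,\quad v_{r+1}=f_{k,j_0}f_{i,j_0}^{-1}v_r.
\]
Substituting and cancelling the common $u_{r+1}$ on the left and $v_r$ on the right, the desired equality $\Phi_r=\Phi_{r+1}$ reduces to the single identity
\[
f_{i_0,j}^{-1}\,f_{i_0,l}\,f_{i,l}^{-1}\;=\;f_{k,j}^{-1}\,f_{k,j_0}\,f_{i,j_0}^{-1}\qquad(\ast)
\]
in $G$. (The symmetric form of $\approx$-step gives the same identity.)

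The main obstacle is establishing $(\ast)$. The cleanest plan is to recognise it as a composition of two defining relations \eqref{rels53} of $G$ from Theorem~\ref{thm_presentation}, both singularised by the same idempotent $e$. A direct check against the multiplication rules \eqref{eqn_multiplication} of $B_{G,H}$ shows that the square $(i_0,k;j,l)$ is LR-singular with singulariser $e$ -- because $i_0,k\in\im\sigma$ make $e$ fix $e'_{i_0,j}$ and $e'_{k,j}$ on the left, and the right action of $e$ sends these to $e'_{i_0,l}$ and $e'_{k,l}$ respectively (using $l=j\tau$) -- while the square $(i,k;j_0,l)$ is UD-singular with singulariser $e$, because $j_0,l\in\im\tau$ make $e$ fix $e'_{i,j_0}$ and $e'_{i,l}$ on the right, and the left action of $e$ sends these to $e'_{k,j_0}$ and $e'_{k,l}$ (using $k=\sigma(i)$). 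The two resulting instances of \eqref{rels53} read
\[
f_{i_0,j}^{-1}f_{i_0,l}=f_{k,j}^{-1}f_{k,l}\quad\text{and}\quad f_{k,l}f_{i,l}^{-1}=f_{k,j_0}f_{i,j_0}^{-1};
\]
substituting the first into the left-hand side of $(\ast)$ and the second into the right-hand side yields the common value $f_{k,j}^{-1}f_{k,l}f_{i,l}^{-1}$, which establishes $(\ast)$ and completes the argument.
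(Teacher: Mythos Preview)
Your proof is correct and follows the same overall plan as the paper's: use Lemma~\ref{lem_SimRelation} to obtain a $\sim$-chain, and show that the quantity $vu f_{ij}^{-1}$ (equivalently, your $\Phi_r = u_r f_{i''_r,j'_r}^{-1} v_r$) is preserved by each $\approx$-step, via Lemma~\ref{lemeactions}.

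The only difference is in the verification of invariance. You work with arbitrary $i_0\in\im\sigma$ and $j_0\in\im\tau$, which leaves you with the residual identity $(\ast)$, and you then dispatch it by recognising two singular squares and invoking the corresponding relations~\eqref{rels53}. The paper instead exploits the freedom in Lemma~\ref{lemeactions} to choose the specific values $i_0=\sigma(i)=k$ and $j_0=j\tau=l$ (both legitimately in the required images); with that choice your identity $(\ast)$ reads
\[
f_{k,j}^{-1}f_{k,l}f_{i,l}^{-1} \;=\; f_{k,j}^{-1}f_{k,l}f_{i,l}^{-1},
\]
and the computation telescopes without any appeal to singular squares. So your argument is sound, but longer than necessary; the paper's clever choice of $i_0$ and $j_0$ collapses the whole step to a two-line calculation.
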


\begin{proof} 
It suffices to prove the following 
\begin{cla} 
Assume that we have $\rho^{(1)}(1,u,j)\rho^{(2)}(i,v,1)\approx \rho^{(1)}(1,u',j')\rho^{(2)}(i',v',1)$ via an idempotent 
$e=(\sigma,\tau)\in L_G\cup K_H$, as described in Definition \ref{def_approx}. 
If $vu=f_{ij}$ holds in $G$, then $v'u'=f_{i'j'}$. 
\end{cla}

So, assume that
\begin{align*} 
\rho^{(1)}(1,u,j)\rho^{(2)}(i,v,1) &= [\rho^{(1)}(1,u',j')e]\rho^{(2)}(i,v,1)\\
                          &= \rho^{(1)}(1,u',j')[e\rho^{(2)}(i,v,1)]\\ 
													&= \rho^{(1)}(1,u',j')\rho^{(2)}(i',v',1). 
\end{align*}
Then $j'\tau=j$ and $\sigma (i)=i'$, so that $j\in\im(\tau)$ and $i'\in\im(\sigma)$. By Lemma \ref{lemeactions}, we have
that $u=u'f_{i'j'}^{-1}f_{i'j}$ and $v'=f_{i'j}f_{ij}^{-1}v$ holds in $G$. Hence, we have 
$$ v'u' = f_{i'j}f_{ij}^{-1}vuf_{i'j}^{-1}f_{i'j'} = f_{i'j}f_{ij}^{-1}f_{ij}f_{i'j}^{-1}f_{i'j'} = f_{i'j'}, $$ 
as required. 
\end{proof}

Lemma \ref{lem-uv} taken together with Proposition \ref{prop_bigone} gives the following strengthening of the latter.

\begin{cor} \label{cor_bigone}
The relation
$$ 
\rho^{(1)}(1,1,1)\rho^{(2)}(1,1,1) = \rho^{(1)}(1,u,1)\rho^{(2)}(1,v,1) 
$$ 
holds in $\ig{\B_{G,H}}$ for $u,v\in (F\cup F^{-1})^\ast$ if and only if 
both $u,v$ represent elements of $H$ and $u=v^{-1}$ holds in $G$. 
\end{cor}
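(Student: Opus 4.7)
The plan is to deduce the corollary by combining Lemma \ref{lem-uv} with Proposition \ref{prop_bigone}, together with one small observation about the entry $f_{11}$ and the fact that $\rho'$ and $\rho''$ factor through the group element represented by their middle argument.

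For the forward direction, I would first apply Lemma \ref{lem-uv} with $i = j = 1$ to the assumed equality $\rho'(1,1,1)\rho''(1,1,1) = \rho'(1,u,1)\rho''(1,v,1)$, obtaining $vu = f_{11}$ in $G$. Consulting the table in Figure~\ref{A-vs-fij}, the $(1,1)$ entry is $1$, so $f_{11} = 1$ in $G$ and consequently $u = v^{-1}$. Substituting back, the equality becomes
\[
\rho'(1,1,1)\rho''(1,1,1) = \rho'(1,v^{-1},1)\rho''(1,v,1)
\]
in $\ig{\B_{G,H}}$. Since $\rho'(1,1,1)\rho''(1,1,1)$ represents the idempotent product $e_{11}'e_{11}''$ (because with the natural Schreier choice $j(1) = 1$ we have $r_1 \equiv r_1' \equiv \epsilon$, so $\rho(1,1,1)$ reduces to $e_{11}$), this is precisely the equation appearing in Proposition \ref{prop_bigone} with $w \equiv v$. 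That proposition then yields that $v$, and hence also $u = v^{-1}$, represents an element of $H$.

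For the reverse direction, suppose $u,v \in (F \cup F^{-1})^*$ both represent elements of $H$ and $u = v^{-1}$ holds in $G$. Proposition \ref{prop_bigone} applied to the word $v$ gives
\[
e_{11}'e_{11}'' = \rho'(1,v^{-1},1)\rho''(1,v,1)
\]
in $\ig{\B_{G,H}}$. Now I invoke the key property of $\rho$ highlighted just before Lemma \ref{rho-computable}: the element of $\ig{\B_{G,H}}$ represented by $\rho'(1,w,1)$ depends only on the element of $G$ represented by $w$, not on the particular word $w$. Since $u$ and $v^{-1}$ represent the same element of $G$, we have $\rho'(1,u,1) = \rho'(1,v^{-1},1)$ in $\ig{\B_{G,H}}$, and multiplying on the right by $\rho''(1,v,1)$ yields the required equality after identifying $e_{11}'e_{11}''$ with $\rho'(1,1,1)\rho''(1,1,1)$ as above.

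There is no genuine obstacle here: the heavy lifting has already been done in Lemma \ref{lem-uv} (which propagates the identity $vu = f_{ij}$ along the $\approx$-sequence) and Proposition \ref{prop_bigone} (which characterises when the `diagonal' product equals $e_{11}'e_{11}''$). The only point requiring care is the bookkeeping that $f_{11} = 1$ and that $\rho$ is well defined on group elements, which together allow the two lemmas to be spliced together cleanly.
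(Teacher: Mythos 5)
Your proposal is correct and is essentially the paper's intended argument: the paper gives no separate proof, stating only that the corollary follows by combining Lemma~\ref{lem-uv} (applied with $i=j=1$, using $f_{11}=1$) with Proposition~\ref{prop_bigone}, which is precisely what you carry out. The only detail you add beyond that one-line indication — that $\rho'(1,u,1)=\rho'(1,v^{-1},1)$ in $\ig{\B_{G,H}}$ because $\rho$ depends only on the group element represented by its middle argument — is exactly the property the paper records just before Lemma~\ref{rho-computable}, so your splicing of the two results is sound.
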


\begin{lem}\label{lem:eq-uv} 
Let $u\in (F\cup F^{-1})^\ast$. There exists a word $v\in (F\cup F^{-1})^\ast$ such that 
$$ \rho^{(1)}(1,1,1)\rho^{(2)}(1,u,1) = \rho^{(1)}(1,v,1)\rho^{(2)}(1,1,1) $$ 
holds in $\ig{\B_{G,H}}$ if and only if $u$ represents an element of $H$, in which case $u=v$ must hold in $G$. 
\end{lem}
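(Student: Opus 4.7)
The plan is to reduce both implications to Corollary \ref{cor_bigone} by multiplying the purported equality by a suitable Rees matrix element on the appropriate side. The underlying computations are governed by the Rees matrix structure of the completely simple $\D$-classes $\overline{K}_G'$ and $\overline{K}_G''$, and in particular by the fact that $p_{11}=f_{11}^{-1}=1$ (cf.\ Figure~\ref{A-vs-fij}). This means that $\rho''(1,u,1)\rho''(1,u^{-1},1)$ and $\rho'(1,u,1)\rho'(1,u^{-1},1)$ both represent the identity $(1,1,1)$ of the appropriate Rees matrix semigroup, and hence equal $\rho''(1,1,1)$ and $e_{11}'=\rho'(1,1,1)$ respectively in $\ig{\B_{G,H}}$; similarly, $\rho'(1,u,1)e_{11}'$ equals $\rho'(1,u,1)$ and $e_{11}''\rho''(1,u^{-1},1)$ equals $\rho''(1,u^{-1},1)$.

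For the backward direction I would take $v\equiv u$ and start from
$$e_{11}'e_{11}''=\rho'(1,u^{-1},1)\rho''(1,u,1),$$
which holds in $\ig{\B_{G,H}}$ by Proposition~\ref{prop_bigone} since $u$ represents an element of $H$. Multiplying on the left by $\rho'(1,u,1)$ and using the two simplifications noted above, the left-hand side becomes $\rho'(1,u,1)\rho''(1,1,1)$ while the right-hand side collapses to $e_{11}'\rho''(1,u,1)=\rho'(1,1,1)\rho''(1,u,1)$. Equating the two gives the required equality, with $u=v$ in $G$ trivially.

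For the forward direction I would multiply both sides of the given equality on the right by $\rho''(1,u^{-1},1)$. The same style of Rees matrix computation transforms the equality into
$$\rho'(1,1,1)\rho''(1,1,1)=\rho'(1,v,1)\rho''(1,u^{-1},1),$$
at which point Corollary~\ref{cor_bigone} applies directly and tells us that both $v$ and $u^{-1}$ represent elements of $H$, and that $v=(u^{-1})^{-1}=u$ in $G$. In particular $u$ itself represents an element of $H$.

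There is no substantive obstacle here once Corollary~\ref{cor_bigone} is in hand; the whole argument amounts to choosing the correct one-sided multiplication to reshape the hypothesis into the form of that corollary. The only care needed is in keeping track of which equalities live in the relevant Rees matrix semigroups (and hence in $\ig{\B_{G,H}}$) as opposed to mere equalities of words in $E^*$, and in remembering that because $p_{11}=1$ the idempotents $\rho'(1,1,1)$ and $\rho''(1,1,1)$ act neutrally on their $\D$-classes under this kind of multiplication.
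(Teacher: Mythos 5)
Your argument is correct and, in the forward direction, is essentially the paper's proof verbatim: right-multiply by $\rho''(1,u^{-1},1)$, simplify using the Rees matrix formula \eqref{eq_rhoformula} with $p_{11}=f_{11}^{-1}=1$, and invoke Corollary~\ref{cor_bigone}. For the converse the paper simply cites Lemma~\ref{b-1b}, whereas you left-multiply the conclusion of Proposition~\ref{prop_bigone} by $\rho'(1,u,1)$ and collapse both sides; since that direction of Proposition~\ref{prop_bigone} is itself proved by iterating Lemma~\ref{b-1b}, the two derivations are equivalent and both produce the witness $v=u$.
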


\begin{proof} 
If $ \rho^{(1)}(1,1,1)\rho^{(2)}(1,u,1) = \rho^{(1)}(1,v,1)\rho^{(2)}(1,1,1) $ then 
\begin{align*}
\rho^{(1)}(1,1,1)\rho^{(2)}(1,1,1) &= \rho^{(1)}(1,1,1)\rho^{(2)}(1,u,1)\rho^{(2)}(1,u^{-1},1)\\ 
                          &=\rho^{(1)}(1,v,1)\rho^{(2)}(1,1,1)\rho^{(2)}(1,u^{-1},1)\\
													&= \rho^{(1)}(1,v,1)\rho^{(2)}(1,u^{-1},1). 
\end{align*}
Thus, by the previous corollary, $v=(u^{-1})^{-1}=u$ holds in $G$ and both $u,v$ represent an element of $H$.
The converse is a direct consequence of Lemma \ref{b-1b}.
\end{proof}

\begin{lem}\label{lem-w1w2}
The elements of $X$ are precisely $\rho^{(1)}(1,1,1)\rho^{(2)}(1,w,1)$,  where $w\in (F\cup F^{-1})^\ast$ represents an element of $H$. 
Furthermore, $\rho^{(1)}(1,1,1)\rho^{(2)}(1,w_1,1)=\rho^{(1)}(1,1,1)\rho^{(2)}(1,w_2,1)$ holds in $\ig{\B_{G,H}}$ if and only if $w_1=w_2$ holds in $H$.
\end{lem}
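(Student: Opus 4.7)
The plan has three parts, corresponding to the two inclusions of the set equality and the uniqueness statement.

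\medskip

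For the inclusion $\supseteq$, let $w$ represent an element of $H$ and set $y=\rho'(1,1,1)\rho''(1,w,1)$. The identities $e\cdot\rho''(1,w,1)=y$ and $y\cdot\rho''(1,w^{-1},1)=e$ are immediate Rees-matrix computations in $\overline{K}_G''$ using $f_{11}=1$ (Figure~\ref{A-vs-fij}), giving $y\R e$. For $y\L e$, Lemma~\ref{lem:eq-uv} with $u=v=w$ rewrites $y=\rho'(1,w,1)\rho''(1,1,1)=\rho'(1,w,1)\cdot e$, and the analogous computation $\rho'(1,w^{-1},1)\cdot y=e$ in $\overline{K}_G'$ completes the picture, so $y\in X$.

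\medskip

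For the uniqueness statement, right-multiplying the proposed equality $\rho'(1,1,1)\rho''(1,w_1,1)=\rho'(1,1,1)\rho''(1,w_2,1)$ by $\rho''(1,w_2^{-1},1)$ reduces it to $\rho'(1,1,1)\rho''(1,w_1w_2^{-1},1)=\rho'(1,1,1)\rho''(1,1,1)$, and Corollary~\ref{cor_bigone} (with $u=1$, $v=w_1w_2^{-1}$) then gives $w_1w_2^{-1}=1$ in $G$, equivalent to $w_1=w_2$ in $H$.

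\medskip

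For the inclusion $\subseteq$, take $y\in X$ and pick $t,\ell$ with $yt=e=\ell y$. The identities $e\cdot e_{11}''=e$ and $e_{11}'\cdot e=e$ combined with $y=et$ and $y=\ell e$ give $y\cdot e_{11}''=y$ and $e_{11}'\cdot y=y$. Replacing $t$ by $e_{11}''te_{11}''$ and $\ell$ by $e_{11}'\ell e_{11}'$ preserves $yt=e$ and $\ell y=e$ and places these elements in the respective local submonoids $e_{11}''\ig{\B_{G,H}}e_{11}''$ and $e_{11}'\ig{\B_{G,H}}e_{11}'$. The crucial step is to argue that these can further be chosen inside the group $\H$-classes $H_{e_{11}''}=\{\rho''(1,g,1):g\in G\}$ and $H_{e_{11}'}=\{\rho'(1,h,1):h\in G\}$. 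Granting this and writing $t=\rho''(1,g,1)$,
\[
y=y\cdot e_{11}''=y\cdot \rho''(1,g,1)\cdot\rho''(1,g^{-1},1)=e\cdot\rho''(1,g^{-1},1)=\rho'(1,1,1)\rho''(1,g^{-1},1),
\]
giving $y$ in the required form with $w=g^{-1}$. Writing $\ell=\rho'(1,h,1)$, the equation $\ell y=\rho'(1,h,1)\rho''(1,w,1)=e$ then forces $w\in H$ via Corollary~\ref{cor_bigone}.

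\medskip

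The main obstacle will be justifying the reduction of $t$ and $\ell$ to the group $\H$-classes: I expect this to follow from Green's Lemma, since the right translation by $t$ induces a bijection of the $\H$-class $X$ (taking $y$ to $e$), so $t$ must act invertibly on $X$, and in the non-regular $\H$-class $X$ invertibility is witnessed precisely by $H_{e_{11}''}$.
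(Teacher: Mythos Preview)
Your arguments for the inclusion $\supseteq$ and for the uniqueness clause are correct and essentially coincide with the paper's (the paper multiplies by $\rho''(1,w_1^{-1},1)$ rather than $\rho''(1,w_2^{-1},1)$, an immaterial difference).

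The problem is your inclusion $\subseteq$. You correctly reduce to $t\in e_{11}''\,\ig{\B_{G,H}}\,e_{11}''$, but the further reduction to $t\in H_{e_{11}''}$ is a genuine gap, and your proposed justification does not close it. Green's Lemma does tell you that $\rho_t$ restricts to a bijection $X\to X$, i.e.\ $t\in\mathrm{Stab}(X)$; but it gives no information about \emph{which} elements of $\mathrm{Stab}(X)$ realise a given permutation. Your sentence ``in the non-regular $\H$-class $X$ invertibility is witnessed precisely by $H_{e_{11}''}$'' amounts to asserting that the natural map $H_{e_{11}''}\to\Gamma_X$ is surjective, and that is exactly the content one is trying to establish (indeed, it is essentially Proposition~\ref{pro:Schutz-H}, which the paper deduces \emph{from} this lemma). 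Concretely, the local submonoid $e_{11}''\,\ig{\B_{G,H}}\,e_{11}''$ is strictly larger than $H_{e_{11}''}$: for instance $e_{11}''e_{11}'e_{11}''$ lies in it but is non-regular, so there is no general principle forcing your $t$ into the group $\H$-class.

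The paper avoids this difficulty by a different, more structural route. Rather than studying the multiplier $t$, it computes the \emph{form} of elements of $R_e$ and $L_e$ directly: any $y\in R_e$ is $e\cdot u=\rho'(1,1,1)\bigl(\rho''(1,1,1)\,u\bigr)$, and the second factor remains in the $\R$-class of $e_{11}''$ inside $\overline{K}_G''$, hence equals some $\rho''(1,w,j)$; dually any $y\in L_e$ is $\rho'(i,w',1)\rho''(1,1,1)$. Equating the two representations forces $i=j=1$, and then Lemma~\ref{lem:eq-uv} (already proved) gives $w=w'\in H$. This argument never needs to locate a specific right multiplier inside $H_{e_{11}''}$; it works entirely with the Rees-matrix coordinates and the machinery of Section~\ref{sec:undec}.
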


\begin{proof} 
Any element of the $\R$-class of $\rho^{(1)}(1,1,1)\rho^{(2)}(1,1,1)$ must be of the form 
$$\rho^{(1)}(1,1,1)\rho^{(2)}(1,1,1)u= \rho^{(1)}(1,1,1)\rho^{(2)}(1,w,j)$$ 
for some words $u\in E^\ast$, $w\in (F\cup F^{-1})^\ast$, and $j\in J$. Similarly, any element of the $\L$-class of 
$ \rho^{(1)}(1,1,1)\rho^{(2)}(1,1,1)$ must be of the form $\rho^{(1)}(i,w',1)\rho^{(2)}(1,1,1)$ for some $i\in I$ and $w'\in (F\cup F^{-1})^\ast$. 
So, for such elements to be in the $\H$-class of  $\rho^{(1)}(1,1,1)\rho^{(2)}(1,1,1)$ we must have $i=j=1$ and
$$\rho^{(1)}(1,1,1)\rho^{(2)}(1,w,1) = \rho^{(1)}(1,w',1)\rho^{(2)}(1,1,1)$$
must hold in $\ig{\B_{G,H}}$. Therefore, by Lemma \ref{lem:eq-uv} we must have that $w=w'$ holds in $G$ and both these 
words represent an element of $H$. On the other hand, it is readily verified that for any $w\in (F\cup F^{-1})^\ast$ representing an 
element of $H$, the element
$$\rho^{(1)}(1,1,1)\rho^{(2)}(1,w,1) = \rho^{(1)}(1,w,1)\rho^{(2)}(1,1,1),$$ 
is $\H$-related to $\rho^{(1)}(1,1,1)\rho^{(2)}(1,1,1)$.  

Finally, if $\rho^{(1)}(1,1,1)\rho^{(2)}(1,w_1,1)= \rho^{(1)}(1,1,1)\rho^{(2)}(1,w_2,1)$ holds in  $\ig{\B_{G,H}}$ then
\begin{align*}
\rho^{(1)}(1,1,1)\rho^{(2)}(1,1,1) &= \rho^{(1)}(1,1,1)\rho^{(2)}(1,w_1,1)\rho^{(2)}(1,w_1^{-1},1)\\
                          &= \rho^{(1)}(1,1,1)\rho^{(2)}(1,w_2,1)\rho^{(2)}(1,w_1^{-1},1)\\
													&= \rho^{(1)}(1,1,1)\rho^{(2)}(1,w_2w_1^{-1},1),
\end{align*}
implying that $w_1=w_2$ holds in $H$. The converse is immediate. 
\end{proof}

\begin{proof}[Proof of Proposition \ref{pro:Schutz-H}]
Each word belonging to the stabiliser of $X$ acts by multiplying the `group part' in the second 
factor by a certain word over $F\cup F^{-1}$ representing an element of the subgroup $H$. Hence, 
if $L_H\subseteq (F\cup F^{-1})^\ast$ is a set of representatives for the subgroup $H$, the words 
$\rho^{(2)}(1,w,1)$, $w\in L_H$, constitute a cross-section of the stabiliser with respect to the 
Sch\"{u}tzenberger congruence. It follows by Lemma \ref{lem-w1w2} that the required Sch\"{u}tzenberger 
group is just isomorphic to the group of right translations of $H$ by elements of $H$, which is in turn 
isomorphic to $H$.
\end{proof}

With the view to encouraging future research in this direction, we propose the following two questions.


\begin{que}
Given a biordered set $\E$, explore the relationships between the Sch\"{u}tz\-en\-berger groups associated with non-regular 
$\D$-classes of $\ig{\E}$ and its maximal subgroups.
\end{que}

\begin{que}
Let $\E$ be a finite biordered set. What algorithmic properties of maximal subgroups, Sch\"{u}tzenberger groups, and their 
relationships in $\ig{\E}$ should be assumed to be decidable in order to deduce that the word problem of $\ig{\E}$ is decidable?
\end{que}


\begin{acknowledgements}
The authors are grateful to Stuart W. Margolis (Bar-Ilan), Armando Martino (Southampton), and John C.\ Meakin (Nebraska-Lincoln)
for useful and stimulating discussions and correspondence. Particularly influential were conversations with Stuart Margolis,
concerning short products in $\ig{\E}$ and their significance for both the regularity problem and the word problem. Distinct
echos of these conversations are discernible in both Theorem \ref{thm:reg} and in the ideas underlying the specific construction
in the second half of the paper. 
We are also grateful to an anonymous referee for their careful reading of the article
and helpful suggestions for improving the exposition in many places.
The first named author gratefully acknowledges the kind hospitality of the School of 
Mathematics, University of East Anglia, Norwich, and the School of Mathematics and Statistics, University of St Andrews.
Finally, the second and the third named authors thank the Centre of Algebra of the University of Lisbon (CAUL) for providing
great conditions for working on this subject.
\end{acknowledgements}



\begin{thebibliography}{99}

\bibitem{BMM1}
M.~Brittenham, S.~W.~Margolis, and J.~Meakin,
\newblock Subgroups of the free idempotent generated semigroups need not be free.
\newblock \emph{J. Algebra} 321 (2009), 3026--3042.

\bibitem{BMM2}
M.~Brittenham, S.~W.~Margolis, and J.~Meakin,
\newblock Subgroups of free idempotent generated semigroups: full linear monoids.
\newblock Preprint, 2010. \texttt{arXiv:1009.5683}.

\bibitem{Byleen1984}
K.~Byleen.
\newblock Embedding any countable semigroup in a {$2$}-generated bisimple monoid.
\newblock {\em Glasgow Math. J.}, 25(2) (1984), 153--161.

\bibitem{CP}
A.~H.~Clifford and G.~B.~Preston,
\newblock \emph{The Algebraic Theory of Semigroups, Vol. I}.
\newblock Mathematical Surveys, No. 7. American Mathematical Society, Providence, RI, 1961.

\bibitem{Do1}
I.~Dolinka,
\newblock A note on maximal subgroups of free idempotent generated semigroups over bands.
\newblock \emph{Period. Math. Hungar.} 65 (2012), 97--105.

\bibitem{Do2}
I.~Dolinka,
\newblock A note on free idempotent generated semigroups over the full monoid of partial transformations.
\newblock \emph{Comm. Algebra} 41 (2013), 565--573.

\bibitem{DG}
I.~Dolinka and R.~D.~Gray,
\newblock Maximal subgroups of free idempotent generated semigroups over the full linear monoid.
\newblock \emph{Trans. Amer. Math. Soc.} 366 (2014), 419--455.

\bibitem{DR}
I.~Dolinka and N.~Ru\v skuc,
\newblock Every group is a maximal subgroup of the free idempotent generated semigroup over a band,
\newblock\emph{Internat. J. Algebra Comput.} 23 (2013), 573--581.

\bibitem{Ea1}
D.~Easdown,
\newblock Biordered sets of eventually regular semigroups.
\newblock \emph{Proc. London Math. Soc. (3)} 49 (1984) 483--503.

\bibitem{Ea2}
D.~Easdown,
\newblock Biordered sets come from semigroups.
\newblock \emph{J. Algebra} 96 (1985), 581--591.

\bibitem{ESV}
D.~Easdown, M.~V.~Sapir, and M.~V.~Volkov,
\newblock Periodic elements of the free idempotent generated semigroup on a biordered set.
\newblock \emph{Internat. J. Algebra Comput.} 20 (2010), 189--194.

\bibitem{Er}
J.~A.~Erdos,
\newblock On products of idempotent matrices.
\newblock \emph{Glasgow Math. J.} 8 (1967), 118--122.

\bibitem{FG}
D.~G.~Fitz-Gerald,
\newblock On inverses of products of idempotents in regular semigroups.
\newblock \emph{J. Austral. Math. Soc.} 13 (1972), 335--337.

\bibitem{FL}
J.~Fountain and A.~Lewin,
\newblock Products of idempotent endomorphisms of an independence algebra of finite rank.
\newblock \emph{Proc. Edinb. Math. Soc. (2)} 35 (1992), 493--500.

\bibitem{Gould}
V. Gould,
\newblock Independence  algebras.
\newblock {\em Algebra Universalis} 33 (1995), 294--318.

\bibitem{GY}
V.~Gould and D.~Yang,
\newblock Every group is a maximal subgroup of a naturally occurring free idempotent generated semigroup.
\newblock \emph{Semigroup Forum} 89 (2014), 125--134.

\bibitem{GR1}
R.~Gray and N.~Ru\v{s}kuc,
\newblock On maximal subgroups of free idempotent generated semigroups.
\newblock \emph{Israel J. Math.} 189 (2012), 147--176.

\bibitem{GR2}
R. Gray and N. Ru\v skuc,
\newblock Maximal subgroups of free idempotent generated semigroups over the full transformation monoid.
\newblock \emph{Proc. London Math. Soc. (3)} 104 (2012), 997--1018.

\bibitem{Hi}
P.~M.~Higgins,
\newblock \emph{Techniques of Semigroup Theory}.
\newblock Oxford Science Publications. The Clarendon Press, Oxford University Press, New York, 1992.
With a foreword by G.~B.~Preston.

\bibitem{Ho}
J.~M.~Howie,
\newblock The subsemigroup generated by the idempotents of a full transformation semigroup.
\newblock \emph{J. London Math. Soc.} 41 (1966), 707--716.

\bibitem{HoBook}
J.~M.~Howie,
\newblock \emph{Fundamentals of Semigroup Theory}.
\newblock London Mathematical Society Monographs, New Series, Vol. 12. The Clarendon Press, Oxford University Press,
New York, 1995.

\bibitem{HL}
J.~M.~Howie, and G. Lallement,
\newblock Certain fundamental congruences on a regular semigroup. 
\newblock \emph{Proc. Glasgow Math. Assoc.} 7 (1966), 145--159.

\bibitem{La}
T.~J.~Laffey,
\newblock Products of idempotent matrices.
\newblock \emph{Linear Multilinear Algebra} 14 (1983), 309--314.

\bibitem{LaBook}
G.~Lallement,
\newblock \emph{Semigroups and Combinatorial Applications}.
\newblock John Wiley \& Sons, New York, 1979.

\bibitem{McE}
B.~McElwee,
\newblock Subgroups of the free semigroup on a biordered set in which principal ideals are singletons.
\newblock \emph{Comm. Algebra} 30 (2002), 5513--5519.

\bibitem{Mihailova}
K. A. Mihailova,
\newblock The occurrence problem for direct products of groups
\newblock \emph{(in Russian)}.
\newblock \emph{Dokl. Akad. Nauk SSSR} 119 (1958), 1103--1105.

\bibitem{Miller}
C.~F. Miller, III.
\newblock Decision problems for groups---survey and reflections.
\newblock In {\em Algorithms and classification in combinatorial group theory
  ({B}erkeley, {CA}, 1989)}, volume~23 of {\em Math. Sci. Res. Inst. Publ.},
  pages 1--59. Springer, New York, 1992.

\bibitem{Na}
K.~S.~S.~Nambooripad,
\newblock Structure of regular semigroups. I.
\newblock \emph{Mem. Amer. Math. Soc.} 22 (1979), no. 224, vii+119 pp.

\bibitem{NP}
K.~S.~S.~Nambooripad and F.~J.~Pastijn,
\newblock Subgroups of free idempotent generated regular semigroups.
\newblock \emph{Semigroup Forum} 21 (1980), 1--7.

\bibitem{Pa1}
F.~J.~Pastijn,
\newblock Idempotent generated completely 0-simple semigroups.
\newblock \emph{Semigroup Forum} 15 (1977), 41--50.

\bibitem{Pa2}
F.~J.~Pastijn,
\newblock The biorder on the partial groupoid of idempotents of a semigroup.
\newblock \emph{J. Algebra} 65 (1980), 147--187.

\bibitem{Putcha}
M.~S. Putcha,
\newblock {\em Linear Algebraic Monoids}. London
  Mathematical Society Lecture Note Series, Vol. 133,
\newblock Cambridge University Press, Cambridge, 1988.

\bibitem{putcha88}
M.~S. Putcha,
Algebraic monoids whose nonunits are products of idempotents. 
\newblock {\em Proc. Amer. Math. Soc.} 103  (1988), 38--40.

\bibitem{putcha06}
M.~S. Putcha,
\newblock Products of idempotents in algebraic monoids.
\newblock {\em J. Aust. Math. Soc.} 80  (2006),  193--203.

\bibitem{Ru}
N.~Ru\v{s}kuc,
\newblock Presentations for subgroups of monoids.
\newblock {\em J. Algebra} 220 (1999), 365--380.

\bibitem{Sch1}
M.~P.~Sch\"{u}tzenberger,
\newblock $D$-repr\'esentation des demi-groupes.
\newblock {\em C. R. Acad. Sci. Paris} 244 (1957), 1994--1996.

\bibitem{Sch2}
M.~P.~Sch\"{u}tzenberger,
\newblock Sur la repr\'esentation monomiale des demi-groupes,
\newblock {\em C. R. Acad. Sci. Paris} 246 (1958), 865--867.

\bibitem{YDG}
D.~Yang, I.~Dolinka, and V.~Gould,
\newblock Free idempotent generated semigroups and endomorphism monoids of free $G$-acts.
\newblock {\em J. Algebra} 429 (2015), 133--176. 

\bibitem{YG}
D.~Yang and V.~Gould,
\newblock Free idempotent generated semigroups over bands and biordered sets with trivial products.
\newblock {\em Internat. J. Algebra Comput.} 26 (2016), 473--507.

\end{thebibliography}
\end{document}